\documentclass[11pt,reqno]{amsart}
\usepackage{amsmath,amssymb}
\usepackage[colorlinks=true,linkcolor=magenta,citecolor=blue]{hyperref}
\usepackage[margin=1in]{geometry}
\usepackage{enumitem}
\usepackage{mathabx}
\usepackage{epsfig}

\newtheorem{theorem}{Theorem}[section]
\newtheorem{proposition}{Proposition}[section]
\newtheorem{lemma}{Lemma}[section]

\newtheorem{remark}{Remark}[section]

\newcommand{\R}{\mathbb{R}}

\newcommand{\e}{\varepsilon}
\newcommand{\ity}{\infty}

\title[Semi-linear structurally damped wave equations with nonlinear convection]{Global existence results for semi-linear structurally damped wave equations with nonlinear convection}
\author{Tuan Anh Dao}
\address{Tuan Anh Dao \hfill\break
$\quad$ School of Applied Mathematics and Informatics, Hanoi University of Science and Technology, No.1 Dai Co Viet road, Hanoi, Vietnam \hfill\break
Institute of Mathematics, Vietnam Academy of Science and Technology, No.18 Hoang Quoc Viet road, Hanoi, Vietnam}
\email{anh.daotuan@hust.edu.vn}

\author{Hiroshi Takeda}
\address{Hiroshi Takeda \hfill\break
$\quad$ Faculty of Engineering, Fukuoka Institute of Technology \hfill\break
3-30-1 Wajiro-higashi, Higashi-ku, Fukuoka, 811-0295 Japan}
\email{h-takeda@fit.ac.jp}

\begin{document}
\subjclass[2010]{35A01, 35L15, 76E06}
\keywords{Wave equations; Structural damping; Nonlinear convection; Global existence}

\begin{abstract}
In this paper, we consider the Cauchy problem for semi-linear wave equations with structural damping term $\nu (-\Delta)^2 u_t$, 
where $\nu >0$ is a constant. 
As being mentioned in \cite{GhisiGobbinoHaraux2016, IkehataIyota}, 
the linear principal part brings both the diffusion phenomenon and the regularity loss of solutions.   
This implies that, for the nonlinear problems,   
the choice of solution spaces plays an important role to obtain the global solutions with the sharp decay properties in time. 
Our main purpose of this paper is to prove the global (in time) existence of solutions for the small data and their decay properties for the supercritical nonlinearities. 
\end{abstract}

\maketitle

\section{Introduction}
In this paper, let us study the following Cauchy problem for the semi-linear wave equation with structural damping term:
\begin{equation}
\begin{cases}
u_{tt}- \Delta u+ \nu (-\Delta)^2 u_t= a \circ \nabla f(u,u_t), &\quad x\in \R^n,\, t > 0, \\
u(0,x)= u_0(x),\quad u_t(0,x)=u_1(x), &\quad x\in \R^n,
\end{cases}
\label{equation1.1}
\end{equation}
where $\nu$ is a positive constant, $a\ne 0$ is a constant vector in $\R^{n}$ and $\circ$ represents the inner product in $\R^{n}$. 
Here we mainly deal with two kinds of the nonlinear terms:
\begin{equation*}
\begin{split}
 f(u,u_t)=
\begin{cases}
|u|^{p}, \\
|u_{t}|^{p},
\end{cases}
\end{split}
\end{equation*}
where $p>1$. 
We are interested in finding out the conditions for the growth order of the nonlinearities, the so-called admissible exponents $p$, 
which ensure the global solutions to \eqref{equation1.1} for the small data. 

The crux of our proof ideas of the main results is to apply the derived decay estimates for solutions to the corresponding linear Cauchy problem in the treatment of the nonlinear convection terms.
For this reason, here we would like to give the brief historical survey for the linear equation and some previous results related to nonlinear convection.
Namely, the corresponding linear equation of \eqref{equation1.1} is given by
\begin{equation}
\begin{cases}
u_{tt}- \Delta u+ \nu (-\Delta)^2 u_t= 0, &\quad x\in \R^n,\, t > 0, \\
u(0,x)= u_0(x),\quad u_t(0,x)=u_1(x), &\quad x\in \R^n.
\end{cases}
\label{equation1.2}
\end{equation}
As we can see, this equation was proposed by Ghisi-Gobbino-Haraux \cite{GhisiGobbinoHaraux2016} as one of the modeling case of the second order abstract evolution equation. 
In the cited paper, the authors claimed that the regularity loss type estimates appear not only for solutions $u(t,x)$ itself but also for their derivatives in time $u_t(t,x)$ to \eqref{equation1.2}.
After that, Ikehata-Iyota \cite{IkehataIyota} obtained the asymptotic profile of solutions to \eqref{equation1.2} with the weighted $L^{1,1}$ initial data in suitable space.
Moreover, they also proved that the low frequency part is dominant and the solution is approximated by the diffusion wave as $t \to \infty$.    
Quite recently, Fukushima-Ikehata-Michihisa \cite{FukushimaIkehataMichihisa} have studied the higher order asymptotic expansion of solutions to \eqref{equation1.2}, which corresponds to the additional regularity assumptions on the initial data. 
The other point worthy of mentioning is that the authors have determined a threshold of the regularity condition for the initial data to classify whether the leading factor of the asymptotic profiles of solutions as $t \to \infty$ is given by the low frequency parts or not.
Concerning the study of nonlinear convection, we want to refer the interested readers to a series of previous work, for examples, \cite{EscobedoZuazua1991,Zuazua93,AguirreEscobedo1993,EscobedoZuazua1997} in term of the convection-diffusion equation and references therein.
One should recognize that in the cited papers the large time behavior of solutions has been explored by supposing the initial data with some kind of different regularities.

To the best of the authors' knowledge, there seem not so many research papers regarding the investigation of the semi-linear equation \eqref{equation1.1} so far.
For this reason, generally speaking, previous papers suggest that it is difficult to construct sharp time decay estimates for solutions to the dissipative hyperbolic equations with derivative loss structure. 
Especially, the same situation happens when the growth order of nonlinearities is nearby the critical case even if the nonlinear function belongs to $\mathcal{C}^{2}$ (cf. \cite{D'Abbicco2017}).
We also remark that this is observed when the nonlinearities are smooth (see \cite{SugitaniKawashima}).   

\noindent\textbf{Notations:} Throughout this paper, we use the following notations. We denote $f\lesssim g$ if there exists a constant $C>0$ such that $f\le Cg$, and $f \sim g$ if $g\lesssim f\lesssim g$. We write $\widehat{h}(t,\xi):= \mathcal{F}_{x\rightarrow \xi}\big(h(t,x)\big)$ as the Fourier transform with respect to the spatial variable of a function $h(t,x)$. As usual, $H^s$ and $\dot{H}^s$, with $s \ge 0$, stand for Bessel and Riesz potential spaces based on $L^2$ spaces. For any $\sigma\ge 0$, the weighted spaces $L^{1,\sigma}$ are defined by
$$ L^{1,\sigma}= \Big\{f\in L^1 \,:\, \|f\|_{L^{1,\sigma}}:= \int_{\R^n} (1+|x|)^\sigma |f(x)|\,dx<+\ity \Big\}. $$
Moreover, if we introduce the space $\mathcal{D}:= \big(L^{1,\sigma} \cap H^{s_1}\big) \times \big(L^{1,\sigma} \cap H^{s_2}\big)$ with $\sigma,s_1,s_2\ge 0$, then the norm is defined by
$$ \|(v_0,v_1)\|_{\mathcal{D}}=\|v_0\|_{L^{1,\sigma}}+ \|v_0\|_{H^{s_1}}+ \|v_1\|_{L^{1,\sigma}}+ \|v_1\|_{H^{s_2}}. $$

Now we consider the nonlinear function $f(u,u_t)=|u|^p$ in (\ref{equation1.1}), i.e. the following semi-linear equation:
\begin{equation}
\begin{cases}
u_{tt}- \Delta u+ \nu (-\Delta)^2 u_t= a \circ \nabla |u|^p, &\quad x\in \R^n,\, t > 0, \\
u(0,x)= u_0(x),\quad u_t(0,x)=u_1(x), &\quad x\in \R^n.
\end{cases}
\label{equation3.1}
\end{equation}
Our first result states the global (in time) existence of solutions to \eqref{equation3.1} with their sharp decay properties.
\begin{theorem} \label{theorem3.1}
Let $n=2,3,4,5$ and $\e$ is a sufficiently small positive constant. 
Suppose that the following condition:
\begin{equation} \label{exponent3.1}
\begin{split}
&p> 5 \hspace{2.2cm} \text{if}\ n=2, \\
&p\ge 1+\dfrac{4}{n-1} \qquad \text{if}\ n=3,4,5.
\end{split}
\end{equation}
Then, there exists a constant $\e_0>0$ such that for any small data
$$ (u_0,u_1) \in \mathcal{A}:= \big(L^1 \cap H^{\frac{n}{2}+\e}\big) \times \big(L^1 \cap L^{2} \big) $$
satisfying the assumption $\|(u_0,u_1)\|_{\mathcal{A}}\le \e_0,$ 
the Cauchy problem \eqref{equation3.1} admits a unique global in time solution in the class 
$$ u \in \mathcal{C}\big([0,\ity),H^{\frac{n}{2}+\e}\big)\cap \mathcal{C}^1\big([0,\ity), L^{2} \big). $$ 
Moreover, the following estimates hold:
\begin{align}
\|u(t,\cdot)\|_{L^2}& \lesssim
\begin{cases}
\sqrt{\log(t+e)} \|(u_0,u_1)\|_{\mathcal{A}} &\text{ if }\, n=2, \\
(1+t)^{-\frac{n}{8}+\frac{1}{4}} \|(u_0,u_1)\|_{\mathcal{A}} &\text{ if }\, n=3,4,5,
\end{cases} \label{estimate3.1.1} \\
\big\|\nabla^{\frac{n}{2}+\e} u(t,\cdot)\big\|_{L^2}& \lesssim 
\begin{cases}
(1+t)^{-\frac{n-1+\e}{4}} \|(u_0,u_1)\|_{\mathcal{A}} &\text{ if }\, n=2,3,4, \\
(1+t)^{-\frac{3}{4}+\frac{\e}{2}} \|(u_0,u_1)\|_{\mathcal{A}} &\text{ if }\, n=5,
\end{cases} \label{estimate3.1.2} \\
\|u_t(t,\cdot)\|_{L^2}& \lesssim (1+t)^{-\frac{n}{8}} \|(u_0,u_1)\|_{\mathcal{A}} \label{estimate3.1.3}.
\end{align}
\end{theorem}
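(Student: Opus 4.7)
The plan is to use a standard contraction-mapping argument applied to the Duhamel formulation
\begin{equation*}
u(t,x) = u^{\mathrm{lin}}(t,x) + \int_0^t K_1(t-s,\cdot) \ast_x \bigl(a \circ \nabla |u(s,\cdot)|^p\bigr)\,ds,
\end{equation*}
where $u^{\mathrm{lin}}$ is the linear propagator of \eqref{equation1.2} applied to $(u_0,u_1)$ and $K_1$ is the kernel associated with the $u_1$-data. I would work in the Banach space
\begin{equation*}
X(T) = \mathcal{C}\bigl([0,T], H^{\tfrac{n}{2}+\e}\bigr) \cap \mathcal{C}^1\bigl([0,T], L^2\bigr),
\end{equation*}
equipped with
\begin{equation*}
\|u\|_{X(T)} = \sup_{t \in [0,T]}\Bigl(M_1(t)^{-1}\|u(t)\|_{L^2} + M_2(t)^{-1}\bigl\|\nabla^{\tfrac{n}{2}+\e}u(t)\bigr\|_{L^2} + M_3(t)^{-1}\|u_t(t)\|_{L^2}\Bigr),
\end{equation*}
where $M_1, M_2, M_3$ denote the decay functions prescribed by the right-hand sides of \eqref{estimate3.1.1}--\eqref{estimate3.1.3}, including the $\sqrt{\log(t+e)}$ factor when $n=2$. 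The linear estimates for \eqref{equation1.2} derived earlier in the paper yield $\|u^{\mathrm{lin}}\|_{X(T)} \lesssim \|(u_0,u_1)\|_{\mathcal{A}}$ uniformly in $T$, so the essential task is to bound the nonlinear Duhamel contribution $N[u]$ in $X(T)$.

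For this, I would split the $s$-integral into $[0,t/2]$ and $[t/2,t]$. On $[0,t/2]$ the kernel $K_1(t-s)$ retains its full low-frequency decay, so after transferring one spatial derivative onto $K_1$ one applies an $L^1 \cap L^2 \to \dot{H}^{j}$ linear estimate and controls the source through $\||u(s)|^p\|_{L^1} = \|u(s)\|_{L^p}^p$. On $[t/2,t]$ the kernel is weaker and I would instead apply an $L^2$-based estimate, together with $\|\nabla|u(s)|^p\|_{L^2} \lesssim \|u(s)\|_{L^{\ity}}^{p-1}\|\nabla u(s)\|_{L^2}$. The various $L^q$ and $L^\ity$ norms of $u$ are then controlled through Gagliardo--Nirenberg interpolation between $\|u\|_{L^2}$ and $\|\nabla^{\tfrac{n}{2}+\e}u\|_{L^2}$, which is available in the range $n \le 5$ thanks to $H^{\tfrac{n}{2}+\e} \hookrightarrow L^\ity$. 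After substitution, everything reduces to integrals of the shape
\begin{equation*}
\int_0^t (1+t-s)^{-\beta_j}(1+s)^{-\gamma(p,n)}\,ds \lesssim M_j(t),\quad j=1,2,3,
\end{equation*}
and the exponent conditions in \eqref{exponent3.1} are precisely those which make the right-hand side controlled by $M_j(t)$.

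The reason for the strict inequality $p>5$ when $n=2$ is that $M_1(t)$ itself grows like $\sqrt{\log(t+e)}$, so the integrand in the nonlinear estimate carries an extra $\log$ factor from $M_1(s)^p$, which forces the borderline value to be excluded; for $n=3,4,5$ polynomial decay alone suffices and the equality case $p = 1+4/(n-1)$ is admissible. Once the a priori bound $\|N[u]\|_{X(T)} \lesssim \|u\|_{X(T)}^p$ is in hand, the Lipschitz estimate
\begin{equation*}
\|N[u] - N[v]\|_{X(T)} \lesssim \bigl(\|u\|_{X(T)}^{p-1} + \|v\|_{X(T)}^{p-1}\bigr)\|u - v\|_{X(T)}
\end{equation*}
follows from the pointwise inequality $\bigl||u|^p - |v|^p\bigr| \lesssim (|u|^{p-1}+|v|^{p-1})|u-v|$, Hölder, and the same Gagliardo--Nirenberg reductions. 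Smallness of $\|(u_0,u_1)\|_{\mathcal{A}}$ then produces a unique global fixed point.

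The main obstacle I expect is the high-frequency portion on $[t/2,t]$: because of the regularity-loss structure of $K_1$ at frequencies $|\xi|\gtrsim 1$, controlling $\nabla^{\tfrac{n}{2}+\e}N[u]$ requires either extra Sobolev regularity of $|u|^p$ or a trade-off through Gagliardo--Nirenberg that costs decay, and this bookkeeping is what forces the two different shapes of $M_2(t)$ for $n\le 4$ versus $n=5$ and also drives the delicate two-dimensional analysis. Verifying that every split satisfies the balance encoded in \eqref{exponent3.1} is where the bulk of the computational work will lie.
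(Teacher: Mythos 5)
Your proposal is correct and follows essentially the same route as the paper: a contraction argument in the time-weighted space whose weights are exactly the decay rates of \eqref{estimate3.1.1}--\eqref{estimate3.1.3}, with the Duhamel term split into low versus middle/high frequencies and into $[0,t/2]$ versus $[t/2,t]$, the nonlinearity controlled by Gagliardo--Nirenberg and the embedding $H^{\frac{n}{2}+\e}\hookrightarrow L^\infty$, and the final reduction to the standard convolution integral lemmas. You also correctly identify the two delicate points the paper isolates, namely the logarithmic loss that forces $p>5$ strictly when $n=2$ and the regularity-loss bookkeeping that forces the weaker weight $(1+t)^{-\frac{3}{4}+\frac{\e}{2}}$ (and hence a modified solution space) when $n=5$.
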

\begin{remark}
\fontshape{n}
\selectfont
Here we note that the estimate \eqref{estimate3.1.2} for $n=5$ indicates the loss of time decay. 
As a result, we cannot simply apply the proof of Theorem \ref{theorem3.1} from the cases of $n=2,3,4$ to the case of $n=5$.
Then, we need to modify the solution space to obtain the sharp decay property of the $L^{2}$ norm \eqref{estimate3.1.1} for $n=5$.
\end{remark}

Next, we study the nonlinear function $f(u,u_t)= |u_t|^p$ in (\ref{equation1.1}), i.e. the following semi-linear equation:
\begin{equation}
\begin{cases}
u_{tt}- \Delta u+ \nu (-\Delta)^2 u_t= a \circ \nabla |u_t|^p, &\quad x\in \R^n,\, t > 0, \\
u(0,x)= u_0(x),\quad u_t(0,x)=u_1(x), &\quad x\in \R^n.
\end{cases}
\label{equation4.1}
\end{equation}
In this case, we can construct global solutions with their sharp decay properties because of the gained regularity of derivative in time of solutions from the linear principal part as mentioned in \cite{GhisiGobbinoHaraux2016}.  
\begin{theorem} \label{theorem4.1}
Let $n=2,3,4$ and $\e$ is a sufficiently small positive constant. We assume the following condition:
\begin{equation}
p \ge 1+ \max\Big\{\frac{3}{n}, 1 \Big\}. \label{exponent4.1}
\end{equation}
Then, there exists a constant $\e_0>0$ such that for any small data
$$ (u_0,u_1) \in \mathcal{B}:= \big(L^1 \cap H^{\frac{n}{2}+\e}\big) \times \big(L^1 \cap H^{\frac{n}{2}+\e}\big) $$
satisfying the assumption $\|(u_0,u_1)\|_{\mathcal{B}}\le \e_0,$ we have a uniquely determined global (in time) solution
$$ u \in \mathcal{C}\big([0,\ity),H^{\frac{n}{2}+\e}\big)\cap \mathcal{C}^1\big([0,\ity),H^{\frac{n}{2}+\e}\big) $$
to \eqref{equation4.1}. Moreover, the following estimates hold:
\begin{align}
\|u(t,\cdot)\|_{L^2}& \lesssim
\begin{cases}
\sqrt{\log(t+e)} \|(u_0,u_1)\|_{\mathcal{B}} &\text{ if }\, n=2, \\
(1+t)^{-\frac{n}{8}+\frac{1}{4}} \|(u_0,u_1)\|_{\mathcal{B}} &\text{ if }\, n=3,4,
\end{cases} \label{estimate4.1.1} \\
\big\|\nabla^{\frac{n}{2}+\e} u(t,\cdot)\big\|_{L^2}& \lesssim (1+t)^{-\frac{n-1+\e}{4}} \|(u_0,u_1)\|_{\mathcal{B}}, \label{estimate4.1.2} \\
\|u_t(t,\cdot)\|_{L^2}& \lesssim (1+t)^{-\frac{n}{8}} \|(u_0,u_1)\|_{\mathcal{B}},  \label{estimate4.1.3} \\
\big\|\nabla^{\frac{n}{2}+\e} u_t(t,\cdot)\big\|_{L^2}& \lesssim (1+t)^{-\frac{n+\e}{4}} \|(u_0,u_1)\|_{\mathcal{B}}. \label{estimate4.1.4}
\end{align}
\end{theorem}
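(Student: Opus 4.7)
The plan is to close a Banach fixed-point argument in the evolution space $X(T)$ endowed with the norm
\begin{align*}
\|u\|_{X(T)} := \sup_{0 \le t \le T} \Bigl( & M_n(t)^{-1}\|u(t,\cdot)\|_{L^2} + (1+t)^{\frac{n-1+\e}{4}}\big\|\nabla^{\frac{n}{2}+\e} u(t,\cdot)\big\|_{L^2} \\
& + (1+t)^{\frac{n}{8}}\|u_t(t,\cdot)\|_{L^2} + (1+t)^{\frac{n+\e}{4}}\big\|\nabla^{\frac{n}{2}+\e} u_t(t,\cdot)\big\|_{L^2}\Bigr),
\end{align*}
where $M_n(t)=\sqrt{\log(t+e)}$ for $n=2$ and $M_n(t)=(1+t)^{-\frac{n}{8}+\frac{1}{4}}$ for $n=3,4$; these weights are precisely those appearing in \eqref{estimate4.1.1}--\eqref{estimate4.1.4}. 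Writing the mild formulation
\begin{equation*}
u(t) = K_0(t)\ast u_0 + K_1(t)\ast u_1 + \int_0^t K_1(t-s)\ast \bigl(a\circ\nabla|u_t(s,\cdot)|^p\bigr)\,ds,
\end{equation*}
the linear part is controlled at once by the decay estimates for \eqref{equation1.2} already established earlier in the paper, so the entire task reduces to controlling the Duhamel integral, together with its Lipschitz counterpart, in the $X(T)$ norm by $\e_0^{\,p}+\|u\|_{X(T)}^{p}$.

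For each seminorm in $X(T)$ I would split the Duhamel integral as $[0,t/2]\cup[t/2,t]$. On $[0,t/2]$, where the linear kernel still enjoys its full decay gain, I transfer the spatial gradient from the nonlinearity onto the kernel and invoke the $L^{1}\cap L^{2}\to L^{2}$ (respectively $\to\dot H^{\frac{n}{2}+\e}$) estimate applied to $|u_t(s,\cdot)|^p$. On $[t/2,t]$, where the decay gain has been consumed, I keep the derivative on the nonlinearity and estimate $\nabla(|u_t(s,\cdot)|^p)$ in $L^2$ and $\nabla^{\frac{n}{2}+\e}(|u_t(s,\cdot)|^p)$ in $L^2$ directly through the $L^2\to L^2$ bounds. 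The nonlinear inputs are then controlled by Gagliardo--Nirenberg interpolation of $\||u_t|^p\|_{L^q}=\|u_t\|_{L^{pq}}^{p}$ for $q=1,2$ between $\|u_t\|_{L^2}$ and $\|\nabla^{\frac{n}{2}+\e}u_t\|_{L^2}$, and, for the top-order piece, by a fractional Leibniz / Moser-type inequality combined with the Sobolev embedding $H^{\frac{n}{2}+\e}\hookrightarrow L^{\ity}$; this embedding is precisely what pins the regularity level at $\frac{n}{2}+\e$. The contraction estimate for $\Phi u-\Phi v$ follows from the same scheme after applying the pointwise bound $\bigl||u_t|^{p}-|v_t|^{p}\bigr|\lesssim (|u_t|^{p-1}+|v_t|^{p-1})|u_t-v_t|$ together with H\"older's inequality.

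Substituting the $X(T)$ ansatz, every contribution reduces to a time integral of the form $\int_{0}^{t/2}(1+s)^{-\alpha}\,ds$ or $\int_{t/2}^{t}(1+s)^{-\beta}\,ds$ whose boundedness uniformly in $t$ is ensured exactly by \eqref{exponent4.1}; the borderline case $\alpha=1$ in dimension $n=2$ is what produces the logarithmic factor in \eqref{estimate4.1.1}. I expect the principal obstacle to be the top-order estimate of $\big\|\nabla^{\frac{n}{2}+\e}u_t(t,\cdot)\big\|_{L^2}$ on the subinterval $[t/2,t]$: here the fractional chain rule must distribute $\frac{n}{2}+\e-1$ derivatives over $|u_t|^p$, and it is precisely this step that forces the lower bound $p\ge 1+\max\{3/n,1\}$ (the condition $p\ge 2$ being needed for the Moser-type bound on $|v|^p$ at a fractional regularity index exceeding $1$, i.e.\ for $n\le 3$) as well as the restriction $n\le 4$, beyond which the decay produced by the linear estimates is too weak for the convection integrals to close.
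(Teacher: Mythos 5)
Your overall architecture coincides with the paper's: the same solution space $Y(t)$ with exactly those four weighted norms, the Duhamel fixed-point scheme, the splitting $[0,t/2]\cup[t/2,t]$ for the low-frequency contribution, the Sobolev embedding $\|u_t\|_{L^\infty}\lesssim\|u_t\|_{L^2}^{1-\theta}\|\nabla^{\frac n2+\e}u_t\|_{L^2}^{\theta}\lesssim(1+\tau)^{-\frac n4}\|u\|_{Y}$ feeding $\||u_t|^p\|_{L^1}$, $\||u_t|^p\|_{L^2}$, and the integral lemmas producing the condition $p\ge 1+3/n$ and the logarithm at $n=2$. The divergence --- and it is a substantive one --- is in how you treat the top-order norms of the Duhamel term. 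You propose to land $\nabla^{\frac n2+\e}$ (or $\frac n2+\e-1$ derivatives beyond the convection gradient) on $|u_t|^p$ via a fractional Leibniz/chain rule, and you identify this as the principal obstacle. The paper never does this. Its mechanism is twofold: on the low-frequency part, \emph{all} $\frac n2+1+\e$ derivatives are placed on the smooth kernel $\mathcal{K}_{1L}$, even on $[t/2,t]$, where the resulting factor $(1+t-\tau)^{-\frac14(\frac n2+1+\e)}$ is harmless; on the middle/high-frequency part, the symbol $\widehat{\mathcal{K}_1}$ behaves like $|\xi|^{-4}$ for $|\xi|\gg1$ (see \eqref{eq:2.3}), so estimate \eqref{lemma2.5.4} carries the index $(\alpha-4-2j+\beta_2)_{+}$, which for $\alpha=\frac n2+\e\le 2+\e$ and suitable $\beta_2$ reduces to zero applied to $\nabla|u_t|^p$. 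Consequently only the single classical derivative $\nabla|u_t|^p=p|u_t|^{p-2}u_t\nabla u_t$ ever hits the nonlinearity, and $p\ge2$ suffices.

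This matters for correctness, not just elegance. A fractional chain rule of order $\frac n2+\e-1=1+\e>1$ (the case $n=4$) for $F(y)=|y|^p$ with non-integer $p$ merely $\ge2$ is delicate: $F''(y)=p(p-1)|y|^{p-2}$ is only $(p-2)$-H\"older near the origin, so for $2<p<2+\e$ your Moser-type bound at regularity exceeding $2$ is not available by the standard results, whereas the theorem claims all $p\ge2$ for $n=3,4$. Your attribution of the hypotheses is also partly off: in the paper $p\ge2$ is what the first-order chain rule and the difference estimates $\bigl\|\nabla(|u_t|^p-|v_t|^p)\bigr\|_{L^2}$ require, while $p\ge1+\frac3n$ is what closes the time integrals; no fractional-order Moser inequality enters, and the restriction $n\le4$ comes from keeping $\frac n2+\e$ below the $4$-derivative gain of the high-frequency kernel (together with the decay budget), not from a failure of the convection integrals alone. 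To repair your proposal, replace the top-order step on $[t/2,t]$ and on the high-frequency part by the paper's device: transfer all fractional derivatives to the kernel and use Lemma \ref{lemma2.2}, estimate \eqref{lemma2.5.4}, so that only $\|\nabla|u_t|^p\|_{L^2}\lesssim\|u_t\|_{L^\infty}^{p-1}\|\nabla u_t\|_{L^2}\lesssim(1+\tau)^{-\frac n4(p-\frac12)-\frac14}\|u\|_{Y}^{p}$ is needed.
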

\begin{remark}
\fontshape{n}
\selectfont
It is clear that we have assumed the higher regularity for the second data in Theorem \ref{theorem4.1} in comparison with Theorem \ref{theorem3.1}. This comes from the treatment of the nonlinear function $f(u,u_t)= |u_t|^p$ appearing in (\ref{equation4.1}) in the place of the nonlinear function $f(u,u_t)= |u|^p$ appearing in (\ref{equation3.1}).
\end{remark}
This paper is organized as follows:
Section \ref{Sec.Linear} is devoted to the preparation of decay estimates for solutions to the corresponding linear equation \eqref{equation1.2}, which play an essential role to prove the global (in time) existence of small data solutions in Theorems \ref{theorem3.1} and \ref{theorem4.1}.
We will give the proofs of Theorem \ref{theorem3.1} and Theorem \ref{theorem4.1} in Section \ref{Semi-linear.1} and Section \ref{Semi-linear.2}, respectively.
Some further discussions including the large time behavior of the obtained global solutions and several results for \eqref{equation1.1} with the mixture of nonlinearities will be provided in Section \ref{Further.Sec}.
Finally, to end this paper, 
we summarize the well-known estimates, which are useful to show the decay properties of solutions in the appendix.
\section{The treatment of the linear equation} \label{Sec.Linear}
In this section, at first, we are going to prove the decay estimates for the fundamental solutions to \eqref{equation1.2}.
Some of them are already derived in the previous papers \cite{IkehataIyota,FukushimaIkehataMichihisa}.
However, for the convenience of the readers, we rephrase them to our notation.
\subsection{Representation of solutions}
At first, using partial Fourier transformation to (\ref{equation1.2}) we obtain the following Cauchy problem:
\begin{equation}
\widehat{u}_{tt}+ \nu |\xi|^4 \widehat{u}_t+ |\xi|^2 \widehat{u}=0,\quad \widehat{u}(0,\xi)= \widehat{u_0}(\xi),\quad \widehat{u}_t(0,\xi)= \widehat{u_1}(\xi). \label{equation2.1}
\end{equation}
The characteristic roots are
$$ \lambda_{\pm}=\lambda_{\pm}(\xi)= \frac{1}{2}\Big(-\nu|\xi|^4 \pm \sqrt{\nu^2 |\xi|^8- 4|\xi|^2}\Big). $$
The solutions to (\ref{equation2.1}) are presented by the following formula (here we assume $\lambda_{+}\neq \lambda_{-}$):
\begin{align*}
\widehat{u}(t,\xi) &= \frac{\lambda_+ e^{\lambda_- t}-\lambda_- e^{\lambda_+ t}}{\lambda_+- \lambda_-}\widehat{u_0}(\xi)+ \frac{e^{\lambda_+ t}-e^{\lambda_- t}}{\lambda_+- \lambda_-}\widehat{u_1}(\xi) \\ 
&=: \widehat{\mathcal{K}_0}(t,\xi)\widehat{u_0}(\xi)+\widehat{\mathcal{K}_1}(t,\xi)\widehat{u_1}(\xi),
\end{align*}
which leads to the representation formula of solutions to (\ref{equation1.2}) as follows:
\begin{equation} \label{eq:2.105}
u(t,x)= \mathcal{K}_0(t,x) \ast_x u_0(x)+ \mathcal{K}_1(t,x) \ast_x u_1(x). 
\end{equation}
Here $\mathcal{K}_k(t,x)$ with $k=0,1$ are the inverse Fourier transformation of $\widehat{\mathcal{K}_k}(t,\xi)$ with respect to spatial variables.

\subsection{Pointwise estimates in Fourier space}
Taking account of the cases of small and large frequencies separately we have the asymptotic behavior of the characteristic roots as follows:
\begin{align*}
&\lambda_{\pm} \sim -\frac{\nu}{2}|\xi|^4 \pm i|\xi|,\qquad \lambda_+-\lambda_- \sim 2i|\xi| \qquad \text{ for}\ |\xi| \ll 1 , \\
&\lambda_+\sim -\frac{1}{\nu}|\xi|^{-2},\qquad \lambda_- \sim - \nu |\xi|^4,\qquad \lambda_+- \lambda_- \sim \nu |\xi|^4 \qquad \text{ for}\ |\xi| \gg 1.
\end{align*}
To make further discussion rigorously, we follow the notation of \cite{IkehataTakeda} to introduce the radial, smooth cut-off functions $\chi_{L}$, $\chi_{M}$ and $\chi_{H}$ defined by 
\begin{gather*}
\chi_L (|\xi|) = \begin{cases}
	1 &\text{ if }\, |\xi| \leq \rho/2, \\
	0 &\text{ if }\, |\xi| \geq \rho, 
	\end{cases} \qquad
\chi_H (|\xi|) = \begin{cases}
	1 &\text{ if }\, |\xi| \geq 4\rho, \\
	0 &\text{ if }\, |\xi| \leq 2\rho, 
	\end{cases} \\ 
\chi_M (|\xi|) = 1- \chi_L (|\xi|) - \chi_H (|\xi|), 
\end{gather*}
where $\rho>0$ is chosen so that 
\begin{align*}
\rho< \frac{1}{2} \left( \frac{2}{\nu} \right)^{\frac{1}{3}}.
\end{align*}
As an easy consequence, we have the following point-wise estimates for the fundamental solutions in the Fourier space.
\begin{lemma} \label{lemma2.1}
Let $j=0,\,1$ and $s\ge 0$. Then, the following estimates hold:
\begin{align*}
|\xi|^s \chi_L(|\xi|)\big|\partial^j_t \widehat{\mathcal{K}_0}(t,\xi)\big| &\lesssim e^{-c t|\xi|^4} |\xi|^{s+j}, \\
|\xi|^s \chi_L(|\xi|)\big|\partial^j_t \widehat{\mathcal{K}_1}(t,\xi)\big| &\lesssim e^{-c t|\xi|^4} |\xi|^{s+j-1},
\end{align*}
and
\begin{align} \label{eq:2.2}
|\xi|^s \big(  \chi_M(|\xi|)+ \chi_H(|\xi|) \big)\big|\partial^j_t \widehat{\mathcal{K}_0}(t,\xi)\big| \lesssim
|\xi|^{s} \big(|\xi|^{-2j} e^{-ct|\xi|^{-2} } + |\xi|^{-6+4j} e^{-ct|\xi|^{4}}\big), \\
|\xi|^s   \big(  \chi_M(|\xi|)+ \chi_H(|\xi|) \big) \big|\partial^j_t \widehat{\mathcal{K}_1}(t,\xi)\big| \lesssim
|\xi|^{s-4} \big(|\xi|^{-2j} e^{-ct|\xi|^{-2} } + |\xi|^{4j} e^{-ct|\xi|^{4}}\big),
\label{eq:2.3}
\end{align}
where $c$ is a suitable positive constant.
\end{lemma}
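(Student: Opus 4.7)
The plan is to analyze each of the three frequency zones $\mathrm{supp}\,\chi_L$, $\mathrm{supp}\,\chi_M$, $\mathrm{supp}\,\chi_H$ separately, combining the explicit representations of $\widehat{\mathcal{K}_0},\widehat{\mathcal{K}_1}$ with the two Vieta identities $\lambda_++\lambda_-=-\nu|\xi|^4$ and $\lambda_+\lambda_-=|\xi|^2$ that follow from \eqref{equation2.1}. Two algebraic reductions will do most of the work. First, differentiating the explicit representation gives
$$ \partial_t\widehat{\mathcal{K}_0}(t,\xi)=\lambda_+\lambda_-\,\frac{e^{\lambda_- t}-e^{\lambda_+ t}}{\lambda_+-\lambda_-}=-|\xi|^2\,\widehat{\mathcal{K}_1}(t,\xi), $$
so every $j=1$ estimate for $\widehat{\mathcal{K}_0}$ reduces to the $j=0$ estimate for $\widehat{\mathcal{K}_1}$. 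Second,
$$ \partial_t\widehat{\mathcal{K}_1}(t,\xi)=\tfrac{1}{2}\bigl(e^{\lambda_+ t}+e^{\lambda_- t}\bigr)+\tfrac{\lambda_++\lambda_-}{2}\,\widehat{\mathcal{K}_1}(t,\xi), $$
reduces the $j=1$ estimate for $\widehat{\mathcal{K}_1}$ to pointwise bounds on $e^{\lambda_\pm t}$ plus the $j=0$ bound on $\widehat{\mathcal{K}_1}$ already established.

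\emph{Low frequencies.} The choice $\rho<\tfrac{1}{2}(2/\nu)^{1/3}$ forces $\nu^2|\xi|^6/4<1$ on $\mathrm{supp}\,\chi_L$, so the roots form a complex conjugate pair
$$ \lambda_\pm(\xi)=-\tfrac{\nu}{2}|\xi|^4\pm i\beta(\xi),\qquad \beta(\xi):=|\xi|\sqrt{1-\tfrac{\nu^2}{4}|\xi|^6}\sim |\xi|. $$
One then rewrites
$$ \widehat{\mathcal{K}_1}(t,\xi)=e^{-\frac{\nu}{2}|\xi|^4 t}\,\frac{\sin(\beta t)}{\beta},\quad \widehat{\mathcal{K}_0}(t,\xi)=e^{-\frac{\nu}{2}|\xi|^4 t}\!\left(\cos(\beta t)+\frac{\nu|\xi|^4}{2\beta}\sin(\beta t)\right)\!, $$
and uses the elementary bounds $|\sin(\beta t)/\beta|\le 1/\beta\lesssim|\xi|^{-1}$, $|\cos(\beta t)|\le 1$, and $|\xi|^4/\beta\lesssim|\xi|^3\lesssim 1$ to extract the two $\chi_L$-estimates at $j=0$; the $j=1$ versions then follow from the reductions in the first paragraph.

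\emph{High frequencies.} For $|\xi|$ large, a Taylor expansion of $\sqrt{1-4/(\nu^2|\xi|^6)}$ yields
$$ \lambda_+(\xi)=-\tfrac{1}{\nu}|\xi|^{-2}\bigl(1+O(|\xi|^{-6})\bigr),\ \ \lambda_-(\xi)=-\nu|\xi|^{4}\bigl(1+O(|\xi|^{-6})\bigr),\ \ \lambda_+-\lambda_-\sim \nu|\xi|^4. $$
Plugging these into the explicit representations of $\widehat{\mathcal{K}_0},\widehat{\mathcal{K}_1}$ and using the triangle inequality, each appearance of $e^{\lambda_+ t}$ contributes a factor $e^{-ct|\xi|^{-2}}$ and each $e^{\lambda_- t}$ contributes $e^{-ct|\xi|^4}$; whenever a time derivative falls on $e^{\lambda_\pm t}$ it brings down a factor $|\lambda_\pm|\sim |\xi|^{-2}$ or $|\xi|^{4}$ respectively, which is exactly how the weights $|\xi|^{-2j}$ and $|\xi|^{4j}$ appear in \eqref{eq:2.2}--\eqref{eq:2.3}. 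On the bounded middle band $|\xi|\in[\rho/2,4\rho]$ all quantities are uniformly comparable to constants, both roots have strictly negative real part (complex case: $\mathrm{Re}\,\lambda_\pm=-\tfrac{\nu}{2}|\xi|^4$; real case: both negative because $\lambda_+\lambda_-=|\xi|^2>0$ and $\lambda_++\lambda_-<0$), and the resulting $e^{-ct}$-decay is dominated by $e^{-ct|\xi|^{-2}}+e^{-ct|\xi|^4}$.

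\emph{Main obstacle.} The most delicate point is the transition inside $\mathrm{supp}\,\chi_M$ where the discriminant $\nu^2|\xi|^6-4$ changes sign and the two roots coalesce. There the denominator $\lambda_+-\lambda_-$ vanishes, but $(e^{\lambda_+ t}-e^{\lambda_- t})/(\lambda_+-\lambda_-)$ extends smoothly to $t\,e^{\lambda t}$ when $\lambda_+=\lambda_-=\lambda$, so $\widehat{\mathcal{K}_0},\widehat{\mathcal{K}_1}$ remain jointly continuous in $(t,\xi)$; uniformity of the constants across this removable singularity is secured by a compactness argument on the finite band $[\rho/2,4\rho]$. Gluing the three zonal estimates then completes the proof.
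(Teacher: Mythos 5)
Your proof is correct, and for the high-frequency zone it coincides with the paper's argument: both insert the asymptotics $\lambda_+\sim-\tfrac{1}{\nu}|\xi|^{-2}$, $\lambda_-\sim-\nu|\xi|^4$, $\lambda_+-\lambda_-\sim\nu|\xi|^4$ directly into $\partial_t^j\widehat{\mathcal{K}_k}$ and read off the weights $|\xi|^{-2j}$, $|\xi|^{4j}$ term by term. Where you genuinely diverge is that you prove more than the paper does: the paper simply cites Ikehata--Iyota for the low-frequency bounds, whereas you rederive them from the explicit complex-conjugate form $\lambda_\pm=-\tfrac{\nu}{2}|\xi|^4\pm i\beta$ with $\beta\sim|\xi|$ (valid since $\rho<\tfrac12(2/\nu)^{1/3}$), and you explicitly address the root-coalescence point where $\lambda_+=\lambda_-$, which the paper dismisses in one sentence about the roots being negative on the middle band. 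Your reduction identities $\partial_t\widehat{\mathcal{K}_0}=-\lambda_+\lambda_-\widehat{\mathcal{K}_1}=-|\xi|^2\widehat{\mathcal{K}_1}$ and $\partial_t\widehat{\mathcal{K}_1}=\tfrac12(e^{\lambda_+t}+e^{\lambda_-t})+\tfrac{\lambda_++\lambda_-}{2}\widehat{\mathcal{K}_1}$ are correct and not in the paper; the first one is lossless everywhere, but note that the second, if combined with the triangle inequality at \emph{high} frequencies, would only give $e^{-ct|\xi|^{-2}}$ for the $\lambda_+$-part of $\partial_t\widehat{\mathcal{K}_1}$ instead of the sharper $|\xi|^{-6}e^{-ct|\xi|^{-2}}$ required by \eqref{eq:2.3} with $j=1$ (a cancellation between the two terms is what restores the extra $|\xi|^{-6}$). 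You avoid this pitfall because you only invoke that reduction at low frequencies and differentiate the explicit representation directly at high frequencies, but it is worth flagging. One cosmetic imprecision: the coalescence point $|\xi|=(2/\nu)^{1/3}$ lies in $(2\rho,\infty)$ and need not lie in $\mathrm{supp}\,\chi_M\subset\{|\xi|\le4\rho\}$; your compactness argument should simply be run on a bounded annulus $[\rho/2,R]$ large enough to contain it, with the Taylor expansion taking over for $|\xi|>R$.
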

\begin{proof}
Since the desired estimates for the low frequency part are shown in \cite{IkehataIyota}, we only show the estimates for the high frequency part, i.e. \eqref{eq:2.2} and \eqref{eq:2.3}. Observing that
\begin{equation*}
\begin{split}
\chi_H(|\xi|) \partial_{t}^{j} \widehat{\mathcal{K}_{0}}(t,\xi) & =  
\frac{\lambda_+ \lambda_{-}^{j} e^{\lambda_- t}-\lambda_- \lambda_{+}^{j} e^{\lambda_+ t}}{\lambda_+- \lambda_-}
\chi_H(|\xi|) \\
& \sim |\xi|^{-4} \big(|\xi|^{4-2j} e^{-ct|\xi|^{-2} } + |\xi|^{-2+4j} e^{-ct|\xi|^{4}}\big), 
\end{split}
\end{equation*}
we have the estimate \eqref{eq:2.2}. Here we notice that the characteristic roots $\lambda_{\pm}$ are negative in the middle frequency part $\big\{\xi\in \R^n \,:\, |\xi| \in \big[\rho/2,4\rho\big]\big\}$. So, the corresponding estimates for this part possess an exponential decay.
Similarly, we can obtain the estimate \eqref{eq:2.3}
by 
\begin{equation*}
\begin{split}
\chi_H(|\xi|) \partial_{t}^{j} \widehat{\mathcal{K}_{1}}(t,\xi) & =  
\frac{ \lambda_{+}^{j} e^{\lambda_+ t}-\lambda_{-}^{j} e^{\lambda_- t}}{\lambda_+- \lambda_-}
\chi_H(|\xi|) \\
& \sim |\xi|^{-4} \big(|\xi|^{-2j} e^{-ct|\xi|^{-2} } + |\xi|^{4j} e^{-ct|\xi|^{4}}\big).
\end{split}
\end{equation*}
This completes the proof of Lemma \ref{lemma2.1}.
\end{proof}
%
\subsection{Decay estimates}
Now we define the functions by 
\begin{equation}
\begin{split}
\mathcal{K}_{k \ell }(t,x):=\mathcal{F}^{-1}\big(
\widehat{\mathcal{K}_{k}}(t,\xi)
\chi_{\ell}(|\xi|) 
\big)
\end{split}
\end{equation}
for $k=0,1$ and $\ell=L,M,H$. 
Once we have Lemma \ref{lemma2.1}, we can easily conclude the decay properties of the frequency-wise evolution operators 
$\mathcal{K}_{k \ell }(t,x) \ast_x$ for $k=0,1$ and $\ell=L,M,H$ as follows. 
\begin{lemma}  \label{lemma2.2}
Let $\alpha\ge 0$, $j=0,\,1$ and $r \in [1,2]$. Then, the following estimates hold:
\begin{align}
\big\|\partial_t^j \nabla^\alpha \mathcal{K}_{0L}(t,x) \ast_x g \big\|_{L^2} &\lesssim (1+t)^{-\frac{n}{4}(\frac{1}{r}-\frac{1}{2})-\frac{\alpha+j}{4}}\|g\|_{L^r}, \label{lemma2.5.1} \\
\big\|\partial_t^j \nabla^\alpha \mathcal{K}_{1L}(t,x) \ast_x g \big\|_{L^2} &\lesssim
\begin{cases}
t^{\frac{1}{2}-\alpha} \|g\|_{L^1} &\text{ if }\, (n, j, \alpha) \in \{ 1 \} \times \{ 0 \} \times [0, \frac{1}{2}), \\
\sqrt{\log(t+e)} \|g\|_{L^1} &\text{ if }\, (n, j, \alpha) \in \big\{(1,0, \frac{1}{2}), (2,0,0)\big\}, \\
(1+t)^{-\frac{1}{4}(\frac{1}{r}-\frac{1}{2})-\frac{\alpha+j-1}{4}}\|g\|_{L^r} &\text{ if }\, (n, j, \alpha)\notin \{ 1 \} \times \{ 0 \} \times [0, \frac{1}{2}] \\
&\,\quad \text{ and }\, (n, j, \alpha)\neq (2,0,0),
\end{cases} \label{lemma2.5.2}
\end{align}
and 
\begin{align}
& \big\|\partial_t^j \nabla^\alpha \big( \mathcal{K}_{0M}(t,x) + \mathcal{K}_{0H}(t,x) \big) \ast_x g \big\|_{L^2} \nonumber \\
&\qquad \lesssim e^{-ct} t^{-\frac{n}{4}(\frac{1}{r}-\frac{1}{2}) -\frac{\alpha-6+4j-\beta_{1}}{4}}
  \big\| \nabla^{\beta_{1}} g \big\|_{L^r}
+ (1+t)^{-\frac{\beta_{2}}{2}} \big\| \nabla^{(\alpha-2j+\beta_{2})_{+}} g \big\|_{L^2}, \label{lemma2.5.3} \\
& \big\|\partial_t^j \nabla^\alpha \big( \mathcal{K}_{1M}(t,x)+\mathcal{K}_{1H}(t,x) \big) \ast_x g\big\|_{L^2} \nonumber \\
&\qquad \lesssim e^{-ct} t^{-\frac{n}{4}(\frac{1}{r}-\frac{1}{2}) -\frac{\alpha-4+4j-\beta_{1}}{4}}
  \big\| \nabla^{\beta_{1}} g \big\|_{L^r}
+ (1+t)^{-\frac{\beta_{2}}{2}} \big\| \nabla^{(\alpha-4-2j+\beta_{2})_{+}} g \big\|_{L^2}, \label{lemma2.5.4}
\end{align}
for any space dimensions $n\ge 1$ and $\beta_1,\beta_2\ge 0$, where $c$ is a suitable positive constant and $(\gamma)_{+}:= \max\{\gamma, 0 \}$ for $\gamma \in \R$.
\end{lemma}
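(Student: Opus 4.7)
The plan is to localize the Fourier symbol by means of the cutoffs $\chi_L$, $\chi_M$, $\chi_H$ and insert the pointwise estimates of Lemma \ref{lemma2.1} into a Plancherel/Hausdorff--Young scheme, so as to convert Fourier-side bounds into $L^r \to L^2$ estimates on the physical side for each $r \in [1,2]$.

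For the low-frequency bounds \eqref{lemma2.5.1}--\eqref{lemma2.5.2} the starting point is
\[ \|\partial_t^j \nabla^\alpha \mathcal{K}_{kL} \ast_x g\|_{L^2} = \bigl\| |\xi|^\alpha \bigl(\partial_t^j \widehat{\mathcal{K}_k}\bigr)(t,\xi) \chi_L(|\xi|)\, \widehat g(\xi) \bigr\|_{L^2}, \]
to which I apply H\"older with exponents $(p,q')$ satisfying $1/p + 1/q' = 1/2$, and then Hausdorff--Young $\|\widehat g\|_{L^{q'}} \lesssim \|g\|_{L^r}$, which forces $1/p = 1/r - 1/2$. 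Plugging in the low-frequency bound of Lemma \ref{lemma2.1} and rescaling $\xi = t^{-1/4}\eta$ yields the claimed $(1+t)^{-\frac{n}{4}(\frac{1}{r}-\frac{1}{2})-\frac{\alpha+j}{4}}$ rate for $\mathcal{K}_0$. For $\mathcal{K}_1$ the symbol carries an additional $|\xi|^{-1}$, so the $L^p$ integral near the origin becomes singular precisely in the cases $(n,j,\alpha) \in \{1\} \times \{0\} \times [0,1/2]$ and $(n,j,\alpha) = (2,0,0)$; these are handled by splitting the integral at the scale $|\xi| \sim t^{-1/4}$, the inner region producing the $t^{1/2-\alpha}$ or $\sqrt{\log(t+e)}$ factors while the outer region is absorbed into the generic rate.

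On $\mathrm{supp}\,\chi_M$ the characteristic roots satisfy $\mathrm{Re}\,\lambda_\pm \le -c_0 < 0$ uniformly, so the middle-frequency symbols $\partial_t^j \widehat{\mathcal{K}_k}\,\chi_M$ are supported in a compact annulus and decay as $e^{-c_0 t}$ uniformly in $\xi$; the same Plancherel/Hausdorff--Young argument then gives an exponentially decaying contribution consistent with the first term on the right-hand side of \eqref{lemma2.5.3}--\eqref{lemma2.5.4}. On the high-frequency side I treat the two summands of Lemma \ref{lemma2.1} separately. The piece carrying $e^{-ct|\xi|^4}$ enjoys genuine exponential decay in $t$ because $|\xi|^4 \ge (2\rho)^4$ on $\mathrm{supp}\,\chi_H$; writing the remaining Fourier multiplier as acting on $\nabla^{\beta_1} g$ (to exhibit the derivative shift $\beta_1$) and running the same scheme reproduces the first term of \eqref{lemma2.5.3}--\eqref{lemma2.5.4}.

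The substantive obstacle is the regularity-loss summand with $e^{-ct|\xi|^{-2}}$, which decays only slowly in $t$ at high frequencies. I would absorb it via the elementary inequality
\[ e^{-ct|\xi|^{-2}} \le C_{\beta_2} \bigl(t|\xi|^{-2}\bigr)^{-\beta_2/2} = C_{\beta_2}\, t^{-\beta_2/2}\, |\xi|^{\beta_2}, \qquad \beta_2 \ge 0, \]
which trades $\beta_2$ orders of regularity on $g$ for a factor $t^{-\beta_2/2}$. Combined with Plancherel and the fact that on $\mathrm{supp}\,\chi_H$ any negative power of $|\xi|$ is uniformly bounded (producing the $(\,\cdot\,)_+$ notation in the statement), this yields the second terms $(1+t)^{-\beta_2/2}\,\|\nabla^{(\alpha-2j+\beta_2)_+} g\|_{L^2}$ and $(1+t)^{-\beta_2/2}\,\|\nabla^{(\alpha-4-2j+\beta_2)_+} g\|_{L^2}$ in \eqref{lemma2.5.3} and \eqref{lemma2.5.4} respectively. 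The main difficulties I anticipate are (i) tracking the exponent arithmetic through the regularity-loss trade-off so that the advertised rates come out exactly, and (ii) bookkeeping the borderline low-dimensional logarithmic and polynomial corrections of \eqref{lemma2.5.2}.
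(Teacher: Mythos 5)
Your proposal is correct and follows essentially the same route as the paper: Plancherel combined with H\"older and Hausdorff--Young on the pieces carrying $e^{-ct|\xi|^4}$, uniform exponential decay on the compact middle annulus, and the trade-off $e^{-ct|\xi|^{-2}} \lesssim t^{-\beta_2/2}|\xi|^{\beta_2}$ (equivalently, $|\xi|^{-\beta_2}e^{-ct|\xi|^{-2}} \lesssim t^{-\beta_2/2}$ on $\mathrm{supp}(\chi_M+\chi_H)$, supplemented by the trivial bound for $t\le 1$) for the regularity-loss summand, which is exactly how the paper produces the $(1+t)^{-\beta_2/2}\|\nabla^{(\cdot)_+}g\|_{L^2}$ terms. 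The only cosmetic difference is that the paper cites \cite{IkehataIyota,FukushimaIkehataMichihisa} for most of the low-frequency estimates rather than rerunning the scaling argument you sketch.
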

\begin{proof}
One can find the proof of the estimates \eqref{lemma2.5.1} and \eqref{lemma2.5.2} in \cite{IkehataIyota,FukushimaIkehataMichihisa}
except for the case \eqref{lemma2.5.2} with $ (n, j, \alpha) \in \{ 1 \} \times \{ 0 \} \times (0, \frac{1}{2}]$.
On the other hand, applying the same arguments as in \cite{IkehataIyota,FukushimaIkehataMichihisa} we can easily obtain the following estimates for $n=1$:
\begin{equation*}
\begin{split}
\left\| 
\nabla^{\alpha} \mathcal{F}^{-1} 
\left(
e^{-\frac{\nu}{2} |\xi|^{4} t} \frac{\sin (t |\xi|)}{|\xi|}\chi_L (|\xi|)
\right) \ast_x g
\right\|_{L^{2}} \le 
\begin{cases}
t^{\frac{1}{2}-\alpha} \|g\|_{L^1} &\text{ if }\, \alpha \in [0, \frac{1}{2}), \\
\sqrt{\log(t+e)} \|g\|_{L^1} &\text{ if }\,  \alpha=\frac{1}{2},
\end{cases}
\end{split}
\end{equation*}
which are to conclude the remainder case. 
Then, it suffices to only prove the estimates \eqref{lemma2.5.3} and \eqref{lemma2.5.4}.
Firstly, let us prove the estimate \eqref{lemma2.5.3}.
Indeed, using the formula of Parseval-Plancherel and the estimate \eqref{eq:2.2} one derives
\begin{align*}
\big\|\partial_t^j \nabla^\alpha \big( \mathcal{K}_{0M}(t,x) + \mathcal{K}_{0H}(t,x) \big) \ast_x g \big\|_{L^2} &\lesssim \big\| |\xi|^{\alpha-2j} e^{-ct|\xi|^{-2}} \big(\chi_{M}(|\xi|)+\chi_{H}(|\xi|)\big) \widehat{g} \big\|_{L^2} \\
&\qquad + \big\| |\xi|^{\alpha-6+4j} e^{-ct|\xi|^{4}} \big(\chi_{M}(|\xi|)+\chi_{H}(|\xi|)\big) \widehat{g} \big\|_{L^2}.
\end{align*}
Now, we may see easily that 
\begin{equation*}
\begin{split}
\big\| |\xi|^{\alpha-2j} e^{-ct|\xi|^{-2}} \big(\chi_{M}(|\xi|)+\chi_{H}(|\xi|)\big) \widehat{g}  \big\|_{L^2} &=
\big\| |\xi|^{-\beta_{2}} e^{-ct|\xi|^{-2}}  \big(\chi_{M}(|\xi|)+\chi_{H}(|\xi|)\big) |\xi|^{\alpha-2j+\beta_{2}} \widehat{g} \big\|_{L^2} \\
&\le 
\begin{cases}
\big\| \nabla^{(\alpha-2j+\beta_{2})_{+}} g \big\|_{L^2} &\text{ if }\, 0 \le t \le 1, \\ 
C t^{-\frac{\beta_{2}}{2}} \big\| \nabla^{(\alpha-2j+\beta_{2})_{+}} g \big\|_{L^2} &\text{ if }\, t \ge 1, 
\end{cases}
\end{split}
\end{equation*}
where $C$ is a suitable positive constant, which gives
\begin{equation*}
\begin{split}
\big\| |\xi|^{\alpha-2j} e^{-ct|\xi|^{-2}} \big(\chi_{M}(|\xi|)+\chi_{H}(|\xi|)\big) \widehat{g}  \big\|_{L^2}
\lesssim (1+t)^{-\frac{\beta_{2}}{2}} \big\| \nabla^{(\alpha-2j+\beta_{2})_{+}} g \big\|_{L^2}.
\end{split}
\end{equation*}
In addition, we denote by $r'$, the conjugate number of $r$, i.e. $\frac{1}{r}+\frac{1}{r'}=1$. The application of H\"{o}lder's inequality and the Hausdorff-Young inequality entails
\begin{align*}
&\big\| |\xi|^{\alpha-6+4j} e^{-ct|\xi|^{4}} \big(\chi_{M}(|\xi|)+\chi_{H}(|\xi|)\big) \widehat{g} \big\|_{L^2} \\
&\qquad \lesssim \big\||\xi|^{\alpha-6+4j-\beta_1} e^{-ct|\xi|^{4}} \big(\chi_{M}(|\xi|)+\chi_{H}(|\xi|)\big)\big\|_{L^{\frac{2r}{2-r}}} \big\||\xi|^{\beta_1}\widehat{g}\big\|_{L^{r'}} \\
&\qquad \lesssim e^{-ct} t^{-\frac{n}{4}(\frac{1}{r}-\frac{1}{2}) -\frac{\alpha-6+4j-\beta_{1}}{4}}
  \big\| \nabla^{\beta_{1}} g \big\|_{L^r}.
\end{align*}
From the two above estimates, we can conclude the desired estimate \eqref{lemma2.5.3}. The remaining estimate is shown in a similar way. Hence, our proof is completed.
\end{proof}
From Lemma \ref{lemma2.2}, we may conclude immediately the following proposition.
\begin{proposition} \label{Prop.DecayEstimates}
Let $\ell_1,\,\ell_2 \ge 0$ and $r \in [1,2]$. Then, the solutions to \eqref{equation1.2} satisfy the following estimates:
\begin{align*}
\|u(t,\cdot)\|_{L^2}&\lesssim
\begin{cases}
(1+t)^{-\frac{1}{4}(\frac{1}{r}-\frac{1}{2})}\|u_0\|_{L^r}+ (1+t)^{-\frac{\ell_1}{2}}\|u_0\|_{\dot{H}^{\ell_1}} \\
\qquad + t^{\frac{1}{2}} \|u_1\|_{L^1}+ (1+t)^{-\frac{\ell_2}{2}-2}\|u_1\|_{\dot{H}^{\ell_2}} &\text{ if }\ n=1, \\
(1+t)^{-\frac{1}{2}(\frac{1}{r}-\frac{1}{2})}\|u_0\|_{L^r}+ (1+t)^{-\frac{\ell_1}{2}}\|u_0\|_{\dot{H}^{\ell_1}} \\
\qquad + \sqrt{\log(t+e)} \|u_1\|_{L^1}+ (1+t)^{-\frac{\ell_2}{2}-2}\|u_1\|_{\dot{H}^{\ell_2}} &\text{ if }\ n=2, \\
(1+t)^{-\frac{n}{4}(\frac{1}{r}-\frac{1}{2})}\|u_0\|_{L^r}+ (1+t)^{-\frac{\ell_1}{2}}\|u_0\|_{\dot{H}^{\ell_1}} \\
\qquad + (1+t)^{-\frac{n}{4}(\frac{1}{r}-\frac{1}{2})+\frac{1}{4}}\|u_1\|_{L^r}+ (1+t)^{-\frac{\ell_2}{2}-2}\| u_1\|_{\dot{H}^{\ell_2}} &\text{ if }\ n\ge 3,
\end{cases} \\ 
\|u(t,\cdot)\|_{\dot{H}^s}&\lesssim (1+t)^{-\frac{n}{4}(\frac{1}{r}-\frac{1}{2})-\frac{s}{4}}\|u_0\|_{L^r}+ (1+t)^{-\frac{\ell_1-s}{2}}\|u_0\|_{\dot{H}^{\ell_1}} \\
&\quad+ (1+t)^{-\frac{n}{4}(\frac{1}{r}-\frac{1}{2})-\frac{s-1}{4}}\|u_1\|_{L^r}+ (1+t)^{-\frac{\ell_2-s}{2}-2}\|u_1\|_{\dot{H}^{\ell_2}} \\
&\text{ if }\ 1\le s \le \min\{\ell_1+6,\ell_2+4\}, \\
\|u_t(t,\cdot)\|_{\dot{H}^s}&\lesssim (1+t)^{-\frac{n}{4}(\frac{1}{r}-\frac{1}{2})-\frac{s+1}{4}}\|u_0\|_{L^r}+ (1+t)^{-\frac{\ell_1-s}{2}-1}\|u_0\|_{\dot{H}^{\ell_1}} \\
&\quad+ (1+t)^{-\frac{n}{4}(\frac{1}{r}-\frac{1}{2})-\frac{s}{4}}\|u_1\|_{L^r}+ (1+t)^{-\frac{\ell_2-s}{2}-3}\|u_1\|_{\dot{H}^{\ell_2}} \\
&\text{ if }\ 0\le s \le \min\{\ell_1+2,\ell_2\},
\end{align*}
for any space dimensions $n\ge 1$.
\end{proposition}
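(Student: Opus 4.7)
The plan is to apply Lemma~\ref{lemma2.2} directly to the frequency-decomposed representation
\begin{equation*}
u(t,\cdot) = \sum_{\ell\in\{L,M,H\}} \mathcal{K}_{0\ell}(t,\cdot)\ast_x u_0 + \sum_{\ell\in\{L,M,H\}} \mathcal{K}_{1\ell}(t,\cdot)\ast_x u_1,
\end{equation*}
with parameter choices $(j,\alpha)=(0,0)$ for the $L^2$ bound on $u$, $(j,\alpha)=(0,s)$ for the $\dot H^s$ bound on $u$, and $(j,\alpha)=(1,s)$ for the $\dot H^s$ bound on $u_t$. In each instance the low-frequency pieces are controlled by \eqref{lemma2.5.1}--\eqref{lemma2.5.2} and the middle-plus-high pieces by \eqref{lemma2.5.3}--\eqref{lemma2.5.4}; summing the six resulting bounds then reproduces the claimed estimate after a suitable choice of the auxiliary parameters $\beta_1,\beta_2\ge 0$.

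For the $L^2$ bound, the $u_0$ low-frequency piece gives $(1+t)^{-\frac{n}{4}(1/r-1/2)}\|u_0\|_{L^r}$, and the $u_1$ low-frequency piece accounts for the dimension split: at $\alpha=0$, \eqref{lemma2.5.2} returns $t^{1/2}\|u_1\|_{L^1}$ when $n=1$, $\sqrt{\log(t+e)}\|u_1\|_{L^1}$ when $n=2$, and $(1+t)^{-\frac{n}{4}(1/r-1/2)+1/4}\|u_1\|_{L^r}$ when $n\ge 3$. For the middle/high contributions I pick $\beta_1=0$ together with $\beta_2=\ell_1$ in \eqref{lemma2.5.3} and $\beta_2=\ell_2+4$ in \eqref{lemma2.5.4}; the second terms there become $(1+t)^{-\ell_1/2}\|u_0\|_{\dot H^{\ell_1}}$ and $(1+t)^{-\ell_2/2-2}\|u_1\|_{\dot H^{\ell_2}}$, while the first (exponentially decaying) terms reduce to $e^{-ct}\|u_k\|_{L^r}$-type bounds that are absorbed into the low-frequency polynomial pieces.

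The $\dot H^s$ estimates follow the same template. With $(j,\alpha)=(0,s)$, the low-frequency parts give $(1+t)^{-\frac{n}{4}(1/r-1/2)-s/4}\|u_0\|_{L^r}$ and $(1+t)^{-\frac{n}{4}(1/r-1/2)-(s-1)/4}\|u_1\|_{L^r}$ (only the third case of \eqref{lemma2.5.2} is active since $s\ge 1$), and for the middle/high parts I choose $\beta_2=\ell_1-s$ and $\beta_2=\ell_2-s+4$, so that $(\alpha-2j+\beta_2)_+=\ell_1$ and $(\alpha-4-2j+\beta_2)_+=\ell_2$; this reproduces the $(1+t)^{-(\ell_1-s)/2}\|u_0\|_{\dot H^{\ell_1}}$ and $(1+t)^{-(\ell_2-s)/2-2}\|u_1\|_{\dot H^{\ell_2}}$ terms. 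The same scheme with $(j,\alpha)=(1,s)$, $\beta_2=\ell_1-s+2$, $\beta_2=\ell_2-s+6$ delivers the estimate for $\|u_t\|_{\dot H^s}$. In both cases, setting $\beta_1=\ell_1$ or $\ell_2$ and $r=2$ in the first terms of \eqref{lemma2.5.3}--\eqref{lemma2.5.4} produces an exponential factor $e^{-ct}t^{(\beta_1+6-4j-\alpha)/4}$; requiring this to stay bounded as $t\downarrow 0$ yields exactly the endpoint restrictions $s\le\min\{\ell_1+6,\ell_2+4\}$ for $u$ and $s\le\min\{\ell_1+2,\ell_2\}$ for $u_t$, which is where the $s$-ranges in the statement come from.

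No substantial analytic obstacle is present; the whole argument is a bookkeeping exercise in selecting parameters in Lemma~\ref{lemma2.2} to match each polynomial-decay factor in the conclusion. The main care required is (i) splitting the $L^2$ estimate by dimension to reflect the three regimes of \eqref{lemma2.5.2} at $\alpha=0$, and (ii) correctly identifying the endpoint conditions on $s$ coming from the exponentially decaying first terms of \eqref{lemma2.5.3}--\eqref{lemma2.5.4} at the chosen values of $\beta_1$.
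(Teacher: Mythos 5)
Your approach is exactly the paper's: the authors give no argument beyond ``From Lemma \ref{lemma2.2}, we may conclude immediately the following proposition,'' and your parameter bookkeeping in \eqref{lemma2.5.1}--\eqref{lemma2.5.4} is precisely the intended derivation. The $L^2$ estimate (with its dimension split coming from the three regimes of \eqref{lemma2.5.2} at $\alpha=j=0$) and the $\dot H^s$ estimate for $u_t$ check out: there the constraints $\beta_2=\ell_1-s+2\ge 0$ and $\beta_2=\ell_2-s+6\ge 0$ are implied by the stated range $s\le\min\{\ell_1+2,\ell_2\}$, which, as you say, is exactly what the exponential ($\beta_1$) terms force as $t\downarrow 0$.

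There is, however, one concrete gap in the $\dot H^s$ estimate for $u$: your choice $\beta_2=\ell_1-s$ in \eqref{lemma2.5.3} is only admissible when $s\le\ell_1$, since Lemma \ref{lemma2.2} requires $\beta_2\ge 0$, whereas the proposition claims the range $s\le\ell_1+6$. Your endpoint analysis tracks only the $\beta_1$/exponential constraints ($s\le\ell_1+6$ and $s\le\ell_2+4$) and silently skips the $\beta_2\ge 0$ requirement for the $u_0$ middle/high part. This is not repairable by a different choice of parameters: for $|\xi|\gg 1$ the symbol $\widehat{\mathcal{K}_0}$ contains the piece $e^{-ct|\xi|^{-2}}$, which is $O(1)$ and does not decay in $|\xi|$, so $\|\nabla^s(\mathcal{K}_{0M}+\mathcal{K}_{0H})\ast u_0\|_{L^2}$ genuinely costs $s$ derivatives of $u_0$ (this is the regularity loss the paper emphasizes) and cannot be controlled by $\|u_0\|_{\dot H^{\ell_1}}$ times any function of $t$ when $\ell_1<s$ (test on data concentrated at frequency $R\to\infty$). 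So your argument proves the second estimate only for $1\le s\le\min\{\ell_1,\ell_2+4\}$; since the paper's own ``proof'' is the bare assertion of immediacy, this restriction is a defect of the statement as written rather than of your method, but it should be flagged rather than folded into the endpoint discussion of the $\beta_1$ terms.
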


\begin{remark}
\fontshape{n}
\selectfont
We want to point out that the obtained decay estimates for solutions to (\ref{equation1.2}) and several their derivatives in Proposition \ref{Prop.DecayEstimates} are regularity loss type estimates, which bring some difficulties to treat the associated semi-linear equations like (\ref{equation1.1}). However, to overcome such kind of difficulties, we can use appropriate regularities for the initial data by the suitable choice of $\ell_1$ and $\ell_2$ appearing in Proposition \ref{Prop.DecayEstimates}. Additionally, some technical steps of our proofs to deal with the nonlinear convection terms come into play in the next sections.
\end{remark}

\section{Proof of main result (I)} \label{Semi-linear.1}
In this section, we give the proof of Theorem \ref{theorem3.1}.
\begin{proof}[Proof of Theorem \ref{theorem3.1} with $n=2,3,4$]
We introduce the solution space
$$ X_{1}(t):= \mathcal{C}\big([0,\ity),H^{\frac{n}{2}+\e}\big)\cap \mathcal{C}^1\big([0,\ity),L^{2} \big) $$
with the norm
\begin{align*}
\|u\|_{X_{1}(t)}:= \sup_{0\le \tau \le t} \Big( \ell(\tau)\|u(\tau,\cdot)\|_{L^2} &+ (1+\tau)^{\frac{n-1+\e}{4}}\big\|\nabla^{\frac{n}{2}+\e} u(\tau,\cdot)\big\|_{L^2}+ (1+\tau)^{\frac{n}{8}}\|u_t(\tau,\cdot)\|_{L^2}\Big),
\end{align*}
where
$$ \ell(\tau)= \begin{cases}
\log^{-\frac{1}{2}}(\tau+e) &\text{ if }n=2, \\
(1+\tau)^{\frac{n}{8}-\frac{1}{4}} &\text{ if }n=3,4.
\end{cases} $$
As mentioned in \eqref{eq:2.105}, we can write the solutions of the corresponding linear Cauchy problem with vanishing right-hand side to (\ref{equation3.1}) as follows:
$$ u^{\text{ln}}(t,x)= \mathcal{K}_0(t,x) \ast_{x} u_0(x)+ \mathcal{K}_1(t,x) \ast_{x} u_1(x). $$
Using Duhamel's principle we get the formal implicit representation of solutions to (\ref{equation3.1}) in the following form:
$$ u(t,x)= u^{\text{ln}}(t,x) + \int_0^t \mathcal{K}_1(t-\tau,x) \ast_x \big(a \circ \nabla |u(\tau,x)|^p\big) d\tau=: u^{\text{ln}}(t,x)+ u^{\text{nl}}(t,x). $$
We define a mapping $\Phi_{1}: \,\, X_{1}(t) \longrightarrow X_{1}(t)$ by
$$ \Phi_{1}[u](t,x)= u^{\text{ln}}(t,x)+ u^{\text{nl}}(t,x). $$
In order to conclude the uniqueness and the global (in time) existence of small data solutions to (\ref{equation3.1}) as well, we have to prove the following pair of inequalities:
\begin{align}
\|\Phi_{1}[u]\|_{X_{1}(t)}& \lesssim \|(u_0,u_1)\|_{\mathcal{A}}+ \|u\|^p_{X_{1}(t)}, \label{inequality3.1} \\
\|\Phi_{1}[u]- \Phi_{1}[v]\|_{X_{1}(t)}& \lesssim \|u-v\|_{X_{1}(t)} \big(\|u\|^{p-1}_{X_{1}(t)}+ \|v\|^{p-1}_{X_{1}(t)}\big). \label{inequality3.2}
\end{align}
We firstly compute the norms of the nonlinear term in $X_{1}(t)$. 
Applying Proposition \ref{SobolevEmbedding}, from the definition of the norm in $X_{1}(t)$ we have the following useful estimate:
\begin{align*}
\big\|u(\tau,\cdot)\big\|_{L^{\infty}} \lesssim
\begin{cases}
\big(\log(\tau +e) \big)^{\frac{\varepsilon}{2(1+\varepsilon)}} (1+\tau)^{-\frac{1}{4}}
\|u\|_{X_{1}(t)} &\quad \text{if}\ n=2 , \\ 
(1+\tau)^{-\frac{n-1}{4}}
\|u\|_{X_{1}(t)} &\quad \text{if}\ n=3,4.
\end{cases}
\end{align*}
This leads to the following estimates for the nonlinear terms for $p\ge 2$:
\begin{align*}
\big\||u(\tau,\cdot)|^p\big\|_{L^r} &\le \|u(\tau,\cdot)\|^{p-(3-r)}_{L^{\infty}} 
\|u(\tau,\cdot)\|^{3-r}_{L^{2}} \\
&\lesssim 
\begin{cases}
(1+\tau)^{-\frac{1}{2}(p-\frac{1}{r})+\frac{p}{4}+\varepsilon_{1}}
\|u\|^p_{X_{1}(t)} &\quad \text{if}\ n=2, \\ 
(1+\tau)^{-\frac{n}{4}(p-\frac{1}{r})+\frac{p}{4}}\|u\|^p_{X_{1}(t)} &\quad \text{if}\ n=3,4,
\end{cases}
\end{align*}
with the choice of a sufficiently small constant $\varepsilon_{1}$ satisfying  $0< \varepsilon_{1}< \frac{p-5}{4}$ when $n=2$. Thus, it follows that
$$ \big\||u(\tau,\cdot)|^p\big\|_{L^r} \lesssim (1+\tau)^{-\frac{n}{4}(p-\frac{1}{r})+\frac{p}{4}+\epsilon_{0}}\|u\|^p_{X_{1}(t)} $$
with $n=2,3,4$ and $r=1,2$, where $\epsilon_{0}$ is given by
\begin{equation} \label{eq:3.3}
\begin{split}
\epsilon_{0}= 
\begin{cases}
\varepsilon_{1} \in \big(0,\frac{p-5}{4}\big)&\quad \text{if}\ n=2, \\
0 &\quad \text{if}\ n=3,4.
\end{cases} 
\end{split}
\end{equation}
In addition, one also derives
$$ \big\|\nabla |u(\tau,\cdot)|^p\big\|_{L^2} \le  \|u(\tau,\cdot)\|^{p-1}_{L^{\infty}} \| \nabla u(\tau,\cdot) \|_{L^2}. $$
The application of Proposition \ref{fractionalGagliardoNirenberg} gives
\begin{align*}
\| \nabla u(\tau,\cdot) \|_{L^2} &\lesssim \|u(\tau,\cdot) \|^{1-\theta}_{L^2}\,\|u(\tau,\cdot) \|^{\theta}_{\dot{H}^{\frac{n}{2}+\varepsilon}}\qquad \text{with }\theta= \frac{1}{\frac{n}{2}+\e} \\
&\lesssim \begin{cases}
\big(\log(\tau +e) \big)^{\frac{\varepsilon}{2(1+\varepsilon)}} (1+\tau)^{-\frac{1}{4}}
\|u\|_{X_{1}(t)} &\quad \text{if}\ n=2 , \\ 
(1+\tau)^{-\frac{n}{8}}
\|u\|_{X_{1}(t)} &\quad \text{if}\ n=3,4.
\end{cases}
\end{align*}
Thus, it follows immediately
\begin{align*}
\big\|\nabla |u(\tau,\cdot)|^p\big\|_{L^2} \le  \|u(\tau,\cdot)\|^{p-1}_{L^{\infty}} \| \nabla u(\tau) \|_{2} &
\lesssim (1+\tau)^{-\frac{n}{4}(p-\frac{1}{2})+\frac{p}{4}-\frac{1}{4}+\epsilon_0}\|u\|^p_{X_{1}(\tau)}.
\end{align*}
\par \textit{First let us prove the inequality \eqref{inequality3.1}.} From the definition of the data space, it is obvious that we need to indicate the following inequality instead of \eqref{inequality3.1}:
\begin{equation}
\|u^{\text{nl}}\|_{X_{1}(t)} \lesssim \|u\|^p_{X_{1}(t)}. \label{inequality3.3}
\end{equation}
Our proof is divided into two steps. \medskip

Step 1: $\quad$ We may control the norm $\|u^{\text{nl}}(t,\cdot)\|_{L^2}$ by using the estimates from Lemma \ref{lemma2.2} as follows:
\begin{align*}
\|u^{\text{nl}}(t,\cdot)\|_{L^2} &\lesssim \int_0^t \big\|\nabla \mathcal{K}_{1L}(t-\tau,x) \ast_x |u(\tau,x)|^p\big\|_{L^2} d\tau \\
&\qquad+ \int_0^t \big\|\nabla \big( \mathcal{K}_{1M}(t-\tau,x) + \mathcal{K}_{1H}(t-\tau,x) \big) \ast_x |u(\tau,x)|^p\big\|_{L^2} d\tau \\
&\lesssim \int_0^{t/2} (1+t-\tau)^{-\frac{n}{8}}\big\||u(\tau,\cdot)|^p\big\|_{L^1} d\tau+ \int_{t/2}^t \big\||u(\tau,\cdot)|^p\big\|_{L^2} d\tau \\
&\qquad + \int_0^t (1+t-\tau)^{-\frac{3}{2}}\big\||u(\tau,\cdot)|^p\big\|_{L^2} d\tau \\
&\lesssim \Big(\int_0^{t/2} (1+t-\tau)^{-\frac{n}{8}}(1+\tau)^{-\frac{n}{4}(p-1)+\frac{p}{4}+\epsilon_{0}} d\tau+ \int_{t/2}^t (1+\tau)^{-\frac{n}{4}(p-\frac{1}{2})+\frac{p}{4}+\epsilon_{0}} d\tau \\
&\qquad +\int_0^t (1+t-\tau)^{-\frac{3}{2}}(1+\tau)^{-\frac{n}{4}(p-\frac{1}{2})+\frac{p}{4}+\epsilon_{0}} d\tau \Big) \|u\|^p_{X_{1}(t)}.
\end{align*}
Noting the assumptions \eqref{exponent3.1} and \eqref{eq:3.3}, 
we easily see $-\frac{n}{4}(p-1)+\frac{p}{4}+\epsilon_{0} \le -\frac{3}{4}$.
The direct calculation shows that the first two integrals are estimated as follows: 
\begin{align*}
\int_0^{t/2} (1+t-\tau)^{-\frac{n}{8}}(1+\tau)^{-\frac{n}{4}(p-1)+\frac{p}{4}+\epsilon_{0}} d\tau &\lesssim 
\begin{cases} 
(1+t)^{-\frac{n}{8}}\log(t+e) &\text{ if } \ p \ge 1+\frac{5+4\epsilon_0}{n-1}, \\  
(1+t)^{-\frac{n}{8}-\frac{n}{4}(p-1)+\frac{p}{4}+\epsilon_{0}+1} &\text{ if } \ p < 1+\frac{5+4\epsilon_0}{n-1},  
\end{cases} \\
&\lesssim (1+t)^{-\frac{n}{8}+\frac{1}{4}}
\end{align*}
with $p$ satisfying \eqref{exponent3.1},
and
$$\int_{t/2}^t (1+\tau)^{-\frac{n}{4}(p-\frac{1}{2})+\frac{p}{4}+\epsilon_{0}} d\tau 
\lesssim (1+t)^{-\frac{n}{4}(p-\frac{1}{2})+\frac{p}{4}+1+\epsilon_{0}}
\lesssim (1+t)^{-\frac{n}{8}+\frac{1}{4}}. $$
After applying Lemma \ref{LemmaIntegral.1}, we can obtain the estimate for the third integral:
\begin{align*}
\int_0^t (1+t-\tau)^{-\frac{3}{2}}(1+\tau)^{-\frac{n}{4}(p-\frac{1}{2})+\frac{p}{4}+\epsilon_{0}} d\tau &\lesssim (1+\tau)^{-\min\big\{\frac{3}{2},\,\frac{n}{4}(p-\frac{1}{2})-\frac{p}{4}-\epsilon_{0}\big\}} \\ 
&\lesssim (1+t)^{-\frac{n}{8}-\frac{3}{4}} \le (1+t)^{-\frac{n}{8}+\frac{1}{4}}
\end{align*}
since 
$-\frac{3}{2}< -\frac{n}{8}-\frac{3}{4}$ for $n \le 4$ and $-\frac{n}{4}(p-\frac{1}{2})+ \frac{p}{4} +\epsilon_{0} \le -\frac{n}{8}-\frac{3}{4}$ under the assumptions \eqref{exponent3.1} and \eqref{eq:3.3}.
Therefore, we can conclude that
$$ \|u^{\text{nl}}(t,\cdot)\|_{L^2} \lesssim
(1+t)^{-\frac{n}{8}+ \frac{1}{4}} \|u\|^p_{X_1(t)} \ \text{ for }n=2,3,4. $$
Step 2: $\quad$ By using the same ideas, we may deal with the remaining norms
$$ \big\|\nabla^{\frac{n}{2}+\e} u^{\text{nl}}(t,\cdot)\big\|_{L^2} \quad \text{ and }\quad \|u_t^{\text{nl}}(t,\cdot)\|_{L^2} $$
as follows:
\begin{align*}
\big\|\nabla^{\frac{n}{2}+\e} u^{\text{nl}}(t,\cdot)\big\|_{L^2} &\lesssim 
\int_0^t \big\|\nabla^{\frac{n}{2}+1+\e} \mathcal{K}_{1L}(t-\tau,x) \ast_x |u(\tau,x)|^p\big\|_{L^2} d\tau \\
&\qquad+ \int_0^t \big\|\nabla^{\frac{n}{2}+\e} \big( \mathcal{K}_{1M}(t-\tau,x) + \mathcal{K}_{1H}(t-\tau,x) \big)\ast_x \nabla |u(\tau,x)|^p\big\|_{L^2} d\tau \\
&\lesssim \int_0^{t/2} (1+t-\tau)^{-\frac{n+\e}{4}}\big\||u(\tau,\cdot)|^p\big\|_{L^1} d\tau+ \int_{t/2}^t (1+t-\tau)^{-\frac{1}{4}(\frac{n}{2}+\e)}\big\||u(\tau,\cdot)|^p\big\|_{L^2} d\tau \\
&\qquad+ \int_0^t (1+t-\tau)^{-2+\frac{n}{4}+\frac{\e}{2}}\big\|\nabla |u(\tau,\cdot)|^p\big\|_{L^2} d\tau \\
&\lesssim \Big(\int_0^{t/2} (1+t-\tau)^{-\frac{n+\e}{4}}(1+\tau)^{-\frac{n}{4}(p-1)+\frac{p}{4}+\epsilon_{0}} d\tau \\
&\qquad+ \int_{t/2}^t (1+t-\tau)^{-\frac{1}{4}(\frac{n}{2}+\e)}(1+\tau)^{-\frac{n}{4}(p-\frac{1}{2})+\frac{p}{4}+\epsilon_{0}} d\tau \\
&\qquad+\int_0^t (1+t-\tau)^{-2+\frac{n}{4}+\frac{\e}{2}}(1+\tau)^{-\frac{n}{4}(p-\frac{1}{2})+\frac{p-1}{4}} d\tau \Big)\|u\|^p_{X_{1}(t)}.
\end{align*}
Then, repeating some arguments as we did in Step 1 we may conclude
$$ \big\|\nabla^{\frac{n}{2}+\e} u^{\text{nl}}(t,\cdot)\big\|_{L^2} \lesssim (1+t)^{-\frac{n-1+\e}{4}} \|u\|^p_{X_{1}(t)}. $$
By an analogous manner, we can proceed as follows:
\begin{align*}
\|u_t^{\text{nl}}(t,\cdot)\|_{L^2} &\lesssim \int_0^t \big\|\partial_t\nabla \mathcal{K}_{1L}(t-\tau,x) \ast_x |u(\tau,x)|^p\big\|_{L^2} d\tau \\
&\qquad+ \int_0^t \big\|\partial_t\nabla \big( \mathcal{K}_{1M}(t-\tau,x) + \mathcal{K}_{1H}(t-\tau,x) \big)\ast_x |u(\tau,x)|^p\big\|_{L^2} d\tau \\
&\lesssim \int_0^{t/2} (1+t-\tau)^{-\frac{n}{8}-\frac{1}{4}}\big\||u(\tau,\cdot)|^p\big\|_{L^1} d\tau+ \int_{t/2}^t (1+t-\tau)^{-\frac{1}{4}}\big\||u(\tau,\cdot)|^p\big\|_{L^2} d\tau \\
&\qquad+ \int_0^t e^{-c(t-\tau)} \big\|\nabla |u(\tau,\cdot)|^p\big\|_{L^2} d\tau+ \int_0^t (1+t-\tau)^{-\frac{5}{2}} \big\||u(\tau,\cdot)|^p\big\|_{L^2} d\tau
\end{align*}
\begin{align*}
&\lesssim \Big(\int_0^{t/2} (1+t-\tau)^{-\frac{n}{8}-\frac{1}{4}}(1+\tau)^{-\frac{n}{4}(p-1)+\frac{p}{4}+\epsilon_{0}} d\tau \\
&\qquad + \int_{t/2}^t (1+t-\tau)^{-\frac{1}{4}}(1+\tau)^{-\frac{n}{4}(p-\frac{1}{2})+\frac{p}{4}+\epsilon_{0}} d\tau \\
&\qquad +\int_0^t e^{-c(t-\tau)} (1+\tau)^{-\frac{n}{4}(p-\frac{1}{2})+\frac{p}{4}- \frac{1}{4}+\epsilon_{0}} d\tau \\
&\qquad +\int_0^t  (1+t-\tau)^{-\frac{5}{2}} (1+\tau)^{-\frac{n}{4}(p-\frac{1}{2})+\frac{p}{4}+\epsilon_{0}} d\tau \Big)\|u\|^p_{X_{1}(t)}.
\end{align*}
For this reason, repeating again some arguments as we did in Step 1 one also obtains
\begin{align*}
\int_0^{t/2} (1+t-\tau)^{-\frac{n}{8}-\frac{1}{4}}(1+\tau)^{-\frac{n}{4}(p-1)+\frac{p}{4}+\epsilon_{0}} d\tau &\lesssim (1+t)^{-\frac{n}{8}} \\
\int_{t/2}^t (1+t-\tau)^{-\frac{1}{4}}(1+\tau)^{-\frac{n}{4}(p-\frac{1}{2})+\frac{p}{4}+\epsilon_{0}} d\tau &\lesssim (1+t)^{-\frac{n}{8}} \\
\int_0^t  (1+t-\tau)^{-\frac{5}{2}} (1+\tau)^{-\frac{n}{4}(p-\frac{1}{2})+\frac{p}{4}+\epsilon_{0}} d\tau &\lesssim (1+t)^{-\frac{n}{8}}.
\end{align*}
To estimate the remaining integral, we shall employ Lemma \ref{LemmaIntegral.2} to achieve
\begin{align*}
\int_0^t e^{-c(t-\tau)} (1+\tau)^{-\frac{n}{4}(p-\frac{1}{2})+\frac{p}{4}- \frac{1}{4}+\epsilon_{0}} d\tau &\lesssim (1+t)^{-\frac{n}{4}(p-\frac{1}{2})+\frac{p}{4}- \frac{1}{4}+\epsilon_{0}} \\ 
&\lesssim (1+t)^{-\frac{n}{8}-1} \le (1+t)^{-\frac{n}{8}},
\end{align*}
where we have used $-\frac{n}{4}(p-\frac{1}{2})+ \frac{p}{4}- \frac{1}{4} +\epsilon_{0} \le -\frac{n}{8}-1$ due to the assumptions \eqref{exponent3.1} and \eqref{eq:3.3}. 
Thus, we arrive at the following estimates:
$$ \|u_t^{\text{nl}}(t,\cdot)\|_{L^2} \lesssim (1+t)^{-\frac{n}{8}} \|u\|^p_{X_{1}(t)}. $$
Therefore, from the definition of the norm in $X_{1}(t)$ we obtain immediately the inequality (\ref{inequality3.3}).
\par \textit{Next let us prove the inequality \eqref{inequality3.2}.} 
We shall follow the strategy used in the proof of the inequality (\ref{inequality3.3}). The new difficulty is to require the estimates for the term
$$|u(\tau,\cdot)|^p- |v(\tau,\cdot)|^p $$
in $L^1$, $L^2$ and $\dot{H}^{1}$. Then, repeating an analogous treatment as in the proof of the inequality (\ref{inequality3.3}) we may conlcude the inequality (\ref{inequality3.2}). Indeed, by using H\"{o}lder's inequality we get
\begin{align*}
\big\||u(\tau,\cdot)|^p-|v(\tau,\cdot)|^p\big\|_{L^1} &\lesssim \|u(\tau,\cdot)- v(\tau,\cdot)\|_{L^{p}} \big(\|u(\tau,\cdot)\|^{p-1}_{L^{p}}+\|v(\tau,\cdot)\|^{p-1}_{L^{p}}\big), \\ 
\big\||u(\tau,\cdot)|^p- |v(\tau,\cdot)|^p\big\|_{L^2} &\lesssim \|u(\tau,\cdot)- v(\tau,\cdot)\|_{L^{2p}} \big(\|u(\tau,\cdot)\|^{p-1}_{L^{2p}}+\|v(\tau,\cdot)\|^{p-1}_{L^{2p}}\big).
\end{align*}
Analogously to the proof of (\ref{inequality3.3}), employing Proposition \ref{fractionalGagliardoNirenberg} to the norms
$$ \|u(\tau,\cdot)- v(\tau,\cdot)\|_{L^\eta}, \quad \|u(\tau,\cdot)\|_{L^\eta}, \quad \|v(\tau,\cdot)\|_{L^\eta} $$
with $\eta=p$ and $\eta=2p$ we may arrive at the following estimates:
\begin{align*}
\big\||u(\tau,\cdot)|^p-|v(\tau,\cdot)|^p\big\|_{L^1} &\lesssim (1+\tau)^{-\frac{n}{4}(p-1)+\frac{p}{4}+\epsilon_{0}} \|u-v\|_{X_{1}(t)}\big(\|u\|^{p-1}_{X_{1}(t)}+\|v\|^{p-1}_{X_{1}(t)}\big),\\
\big\||u(\tau,\cdot)|^p- |v(\tau,\cdot)|^p\big\|_{L^2} &\lesssim (1+\tau)^{-\frac{n}{4}(p-\frac{1}{2})+\frac{p}{4}+\epsilon_{0}} \|u-v\|_{X_{1}(t)}\big(\|u\|^{p-1}_{X_{1}(t)}+\|v\|^{p-1}_{X_{1}(t)}\big).
\end{align*}
Let us now turn to estimate the norm
$$\big\||u(\tau,\cdot)|^p-|v(\tau,\cdot)|^p\big\|_{\dot{H}^{1}}= \big\|\nabla \big(|u(\tau,\cdot)|^p-|v(\tau,\cdot)|^p\big)\big\|_{L^{2}}. $$
At first, observing that 
\begin{equation*}
\frac{\text{d}}{\text{d}u} |u|^{p-2} u=(p-1)|u|^{p-2}
\end{equation*}
for $p \ge 2$ 
we apply the mean value theorem to have 
\begin{equation*}
\begin{split}
|u|^{p-2} u -|v|^{p-2} v = (p-1) \big(\omega |u|^{p-2} +(1-\omega) |v|^{p-2}\big)(u-v) 
\end{split}
\end{equation*}
for some $\omega \in [0,1]$.
On the  other hand, we see
\begin{equation*}
\begin{split}
&\nabla \big(|u(\tau,x)|^p-|v(\tau,x)|^p\big) \\
&\qquad= p |u(\tau,x)|^{p-2} u(\tau,x) \nabla u(\tau,x)-p |v(\tau,x)|^{p-2} v(\tau,x) \nabla v(\tau,x) \\
&\qquad = p |u(\tau,x)|^{p-2} u(\tau,x) \big(\nabla u(\tau,x) - \nabla v(\tau,x)\big) \\
&\qquad\quad + p \nabla v(\tau,x) \big(|u(\tau,x)|^{p-2} u(\tau,x) -|v(\tau,x)|^{p-2} v(\tau,x)\big).
\end{split}
\end{equation*}
Therefore, we have 
\begin{align*}
&\big|\nabla \big(|u(\tau,x)|^p-|v(\tau,x)|^p\big)\big| \\
&\qquad \le C |u(\tau,x)|^{p-1}  |\nabla u(\tau,x) - \nabla v(\tau,x)| \\ 
&\qquad\quad + C  |\nabla v(\tau,x)| \big(|u(\tau,x)|^{p-2}+|v(\tau,x)|^{p-2}\big) |u(\tau,x)-v(\tau,x)|.
\end{align*}
Then, we can conclude the estimate
\begin{equation*}
\begin{split}
&\big\| \nabla \big(|u(\tau,\cdot)|^p-|v(\tau,\cdot)|^p\big) \big\|_{L^2} \\
&\qquad \le C \|u(\tau, \cdot) \|_{L^\infty}^{p-1}  \|\nabla u(\tau, \cdot)  - \nabla v(\tau, \cdot) \|_{L^2} \\
&\qquad \quad+ C  \|\nabla v(\tau, \cdot) \|_{L^2} \big(\|u(\tau, \cdot) \|_{L^\infty}^{p-2} 
+\|v(\tau, \cdot) \|_{L^\infty}^{p-2}\big) \|u(\tau, \cdot) -v(\tau, \cdot) \|_{L^\infty},
\end{split}
\end{equation*}
which implies the desired estimate 
\begin{align}
&\big\|\nabla \big(|u(\tau,\cdot)|^p-|v(\tau,\cdot)|^p\big) \big\|_{L^2} \nonumber \\
&\qquad \lesssim
(1+\tau)^{-\frac{n}{4}(p-\frac{1}{2})+\frac{p}{4}-\frac{1}{4}} \|u-v\|_{X_{1}(t)}\big(\|u\|^{p-1}_{X_{1}(t)}+\|v\|^{p-1}_{X_{1}(t)}\big) \label{ineq:3.4}
\end{align}
by the aid of Proposition \ref{fractionalGagliardoNirenberg}.
This completes the proof of inequality (\ref{inequality3.2}).
\end{proof}
\begin{proof}[Proof of Theorem \ref{theorem3.1} with $n=5$]
We need to modify the solution space as
$$ X_{2}(t):= \mathcal{C}\big([0,\ity), H^{\frac{5}{2}+\e}\big)\cap \mathcal{C}^1\big([0,\ity),L^{2} \big) $$
with the norm
\begin{align*}
\|u\|_{X_{2}(t)}:= \sup_{0\le \tau \le t} \Big( (1+\tau)^{\frac{3}{8}} \|u(\tau,\cdot)\|_{L^2} &+ (1+\tau)^{\frac{3}{4}-\frac{\e}{2}}\big\|\nabla^{\frac{5}{2}+\e} u(\tau,\cdot)\big\|_{L^2}+ (1+\tau)^{\frac{5}{8}}\|u_t(\tau,\cdot)\|_{L^2}\Big).
\end{align*}
At first, we note that
\begin{equation*}
\| u(\tau,\cdot) \|_{\infty} \lesssim \| u(\tau,\cdot)\|_{2}^{1-\theta_{0}} \big\| \nabla^{\frac{5}{2}+\e} u(\tau,\cdot) \big\|_{2}^{\theta_{0}} \lesssim (1+\tau)^{\frac{-15+7\e}{4(5+2\e)}}\|u\|_{X_{2}(t)} 
\end{equation*}
with $\theta_{0}=\frac{\frac{5}{2}}{\frac{5}{2}+\e}$ by Proposition \ref{SobolevEmbedding}, and 
\begin{equation*}
\| \nabla u(\tau,\cdot)\|_{2} \lesssim \| u(\tau,\cdot) \|_{2}^{1-\theta_{1}} \big\| \nabla^{\frac{5}{2}+\e} u(\tau,\cdot) \big\|_{2}^{\theta_{1}} \lesssim (1+\tau)^{\frac{-21+2\e}{8(5+2\e)}}\|u\|_{X_{2}(t)}
\end{equation*}
with $\theta_{1}= \frac{1}{\frac{5}{2}+\e}$ by Proposition \ref{fractionalGagliardoNirenberg}.
Now we choose a constant $\e_2$ fulfilling $\varepsilon_{2}\ge \frac{13 \e}{4(5+2 \varepsilon)}$, which solves 
$\frac{-15+7 \e}{4(5+2 \e)} \le -\frac{3}{4} +\e_{2}$ and 
$\frac{-21+2 \e}{8(5+2 \e)} \le -\frac{21}{40}+\e_{2}$, 
so that one arrives at the following estimates:
\begin{align*}
\| u(\tau,\cdot) \|_{\infty} &\lesssim (1+\tau)^{-\frac{3}{4}+\varepsilon_{2} }\|u\|_{X_{2}(t)}, \\ 
\| \nabla u(\tau,\cdot)\|_{2} &\lesssim (1+\tau)^{-\frac{21}{40}+\varepsilon_{2}}\|u\|_{X_{2}(t)}.
\end{align*}
Then we define a mapping $\Phi_{2}: \,\, X_{2}(t) \longrightarrow X_{2}(t)$ by
$$ \Phi_{2}[u](t,x)= u^{\text{ln}}(t,x)+ u^{\text{nl}}(t,x). $$
As we see in the proof of Theorem \ref{theorem3.1} with $n=2,3,4$, the proof of Theorem \ref{theorem3.1} with $n=5$ is reduced to prove the following estimates:
\begin{align}
\|\Phi_{2}[u]\|_{X_{2}(t)}& \lesssim \|(u_0,u_1)\|_{\mathcal{A}}+ \|u\|^p_{X_{2}(t)}, \label{inequality5.1} \\
\|\Phi_{2}[u]- \Phi_{2}[v]\|_{X_{2}(t)}& \lesssim \|u-v\|_{X_{2}(t)} \big(\|u\|^{p-1}_{X_{2}(t)}+ \|v\|^{p-1}_{X_{2}(t)}\big). \label{inequality5.2}
\end{align}
Similar arguments to the proof of Theorem \ref{theorem3.1} with $n=2,3,4$ yield the estimates for the nonlinear terms as follows:
\begin{align}
\big\||u(\tau,\cdot)|^p\big\|_{L^1} &\le \|u(\tau,\cdot)\|^{p-2}_{L^{\infty}} \|u(\tau,\cdot)\|^{2}_{L^{2}} 
\lesssim (1+\tau)^{-\frac{3}{4}(p-1)+\e_{2} (p-2)}\|u\|^p_{X_{2}(t)}, \label{eq:3.10} \\ 
\big\||u(\tau,\cdot)|^p\big\|_{L^2} &\le \|u(\tau,\cdot)\|^{p-1}_{L^{\infty}} \|u(\tau,\cdot)\|_{L^{2}}  
\lesssim (1+\tau)^{-\frac{3}{4}(p-\frac{1}{2})+\e_{2}(p-1)} \|u\|^p_{X_{2}(t)}, \label{eq:3.11} \\
\big\|\nabla |u(\tau,\cdot)|^p\big\|_{L^2} &\le \|u(\tau,\cdot)\|^{p-1}_{L^{\infty}} \|\nabla u(\tau,\cdot)\|_{L^{2}}  
\lesssim (1+\tau)^{-\frac{3}{4}(p-\frac{3}{10})+\e_{2} p} \|u\|^p_{X_{2}(t)}. \label{eq:3.12}
\end{align}
For the proof of Theorem \ref{theorem3.1} with $n=5$, the following form of the estimates from \eqref{eq:3.10} to \eqref{eq:3.12} are useful:
\begin{align*}
\big\||u(\tau,\cdot)|^p\big\|_{L^1} &\lesssim (1+\tau)^{-\frac{3}{4}}\|u\|^p_{X_{2}(t)},  \\ 
\big\||u(\tau,\cdot)|^p\big\|_{L^2} &\lesssim (1+\tau)^{-\frac{9}{8}+\e_{2}p} \|u\|^p_{X_{2}(t)},  \\
\big\|\nabla |u(\tau,\cdot)|^p\big\|_{L^2} &\lesssim (1+\tau)^{-\frac{51}{40}+\e_{2} p} \|u\|^p_{X_{2}(t)}. 
\end{align*}
They follow by the same method as in the previous section.
\par \textit{First let us prove the inequality \eqref{inequality5.1}.} As in the proof of the estimate \eqref{inequality3.1}, 
we only show the estimate
\begin{equation}
\|u^{\text{nl}}\|_{X_{2}(t)} \lesssim \|u\|^p_{X_{2}(t)}. \label{inequality5.3}
\end{equation}
Our proof is divided into two steps. \medskip

Step 1: $\quad$ We may estimate the norm $\|u^{\text{nl}}(t,\cdot)\|_{L^2}$ as follows:
\begin{align*}
\|u^{\text{nl}}(t,\cdot)\|_{L^2} &\lesssim \int_0^t \big\|\nabla \mathcal{K}_{1L}(t-\tau,x) \ast_x |u(\tau,x)|^p\big\|_{L^2} d\tau \\
&\qquad + \int_0^t \big\|\nabla \big( \mathcal{K}_{1M}(t-\tau,x) + \mathcal{K}_{1H}(t-\tau,x) \big) \ast_x |u(\tau,x)|^p\big\|_{L^2} d\tau \\
&\lesssim \int_0^{t} (1+t-\tau)^{-\frac{5}{8}}\big\||u(\tau,\cdot)|^p\big\|_{L^1} d\tau+ \int_0^t (1+t-\tau)^{-\frac{3}{2}}\big\||u(\tau,\cdot)|^p\big\|_{L^2} d\tau \\
&\lesssim \Big(\int_0^{t} (1+t-\tau)^{-\frac{5}{8}}(1+\tau)^{-\frac{3}{4}} d\tau+ \int_0^t (1+t-\tau)^{-\frac{3}{2}}(1+\tau)^{-\frac{9}{8}+\e_{2}p} d\tau \Big) \|u\|^p_{X_{2}(t)}.
\end{align*}
The employment of Lemma \ref{LemmaIntegral.1} implies immediately that
$$ \|u^{\text{nl}}(t,\cdot)\|_{L^2} \lesssim (1+t)^{-\frac{3}{8}}\|u\|^p_{X_{2}(t)}. $$
Step 2: $\quad$ By using the same ideas, we may control the remaining norms
$$ \big\|\nabla^{\frac{n}{2}+\e} u^{\text{nl}}(\tau,\cdot)\big\|_{L^2} \quad \text{ and }\quad \|u_t^{\text{nl}}(t,\cdot)\|_{L^2} $$
as follows:
\begin{align*}
\big\|\nabla^{\frac{5}{2}+\e} u^{\text{nl}}(t,\cdot)\big\|_{L^2} &\lesssim \int_0^t \big\|\nabla^{\frac{5}{2}+1+\e} \mathcal{K}_{1L}(t-\tau,x) \ast_x |u(\tau,x)|^p\big\|_{L^2} d\tau \\
&\qquad+ \int_0^t \big\|\nabla^{\frac{5}{2}+\e} \big( \mathcal{K}_{1M}(t-\tau,x) + \mathcal{K}_{1H}(t-\tau,x) \big)\ast_x \nabla |u(\tau,x)|^p\big\|_{L^2} d\tau \\
&\lesssim \int_0^{t} (1+t-\tau)^{-\frac{5+\e}{4}}\big\||u(\tau,\cdot)|^p\big\|_{L^1} d\tau+ \int_0^t (1+t-\tau)^{-2+\frac{5}{4}+\frac{\e}{2}}\big\|\nabla |u(\tau,\cdot)|^p\big\|_{L^2} d\tau \\
&\lesssim \Big(\int_0^{t} (1+t-\tau)^{-\frac{5+\e}{4}}(1+\tau)^{-\frac{3}{4}} d\tau \\
&\qquad+ \int_0^t (1+t-\tau)^{-\frac{3}{4}+\frac{\e}{2}}(1+\tau)^{-\frac{51}{40}+\e_{2}p} d\tau \Big)\|u\|^p_{X_{2}(t)}.
\end{align*}
Then, after applying Lemma \ref{LemmaIntegral.1} again, we may conclude the following estimate:
$$ \big\|\nabla^{\frac{5}{2}+\e} u^{\text{nl}}(t,\cdot)\big\|_{L^2} \lesssim (1+t)^{-\frac{3}{4}+\frac{\e}{2}} \|u\|^p_{X_{2}(t)}. $$
Now we turn to deal with the norm $\|u_t^{\text{nl}}(t,\cdot)\|_{L^2}$ by
\begin{align*}
\|u_t^{\text{nl}}(t,\cdot)\|_{L^2} &\lesssim \int_0^t \big\|\partial_t\nabla \mathcal{K}_{1L}(t-\tau,x) \ast_x |u(\tau,x)|^p\big\|_{L^2} d\tau \\
&\qquad+ \int_0^t \big\|\partial_t \nabla \big( \mathcal{K}_{1M}(t-\tau,x) + \mathcal{K}_{1H}(t-\tau,x) \big)
 \ast_x  |u(\tau,x)|^p\big\|_{L^2} d\tau \\
&\lesssim \int_0^{t} (1+t-\tau)^{-\frac{7}{8}}\big\||u(\tau,\cdot)|^p\big\|_{L^1} d\tau \\
&\qquad+ \int_0^t e^{-c(t-\tau)} \big\|\nabla |u(\tau,\cdot)|^p\big\|_{L^2} d\tau+ \int_0^t (1+t-\tau)^{-\frac{5}{2}} \big\||u(\tau,\cdot)|^p\big\|_{L^2} d\tau \\
&\lesssim \Big(\int_0^{t} (1+t-\tau)^{-\frac{7}{8}}(1+\tau)^{-\frac{3}{4}} d\tau + \int_0^t e^{-c(t-\tau)}(1+\tau)^{-\frac{51}{40}+\e_{2}p} d\tau \\
&\qquad+ \int_0^t (1+t-\tau)^{-\frac{5}{2}} (1+\tau)^{-\frac{9}{8}+\e_{2}p} d\tau \Big)\|u\|^p_{X_{2}(t)}.
\end{align*}
Then, using some arguments as we did in Step 2 in the proof of the case $n=2,3,4$ we arrive at the following estimate:
\begin{align*}
\|u_t^{\text{nl}}(t,\cdot)\|_{L^2} &\lesssim (1+t)^{-\frac{5}{8}} \|u\|^p_{X_{2}(t)}.
\end{align*}
Therefore, from the definition of the norm in $X_{2}(t)$ we obtain immediately the inequality (\ref{inequality5.3}).
\par \textit{Next let us prove the inequality \eqref{inequality5.2}.} 
By the same way as in the proof of the estimate \eqref{inequality3.2}, we have 
\begin{align*}
\big\||u(\tau,\cdot)|^p-|v(\tau,\cdot)|^p\big\|_{L^1} &\lesssim (1+\tau)^{-\frac{3}{4}} \|u-v\|_{X_{2}(t)}\big(\|u\|^{p-1}_{X_{2}(t)}+\|v\|^{p-1}_{X_{2}(t)}\big),\\
\big\||u(\tau,\cdot)|^p- |v(\tau,\cdot)|^p\big\|_{L^2} &\lesssim (1+\tau)^{-\frac{9}{8}+\e_{2}p} \|u-v\|_{X_{2}(t)}\big(\|u\|^{p-1}_{X_{2}(t)}+\|v\|^{p-1}_{X_{2}(t)}\big), \\
\big\| \nabla \big(|u(\tau,\cdot)|^p-|v(\tau,\cdot)|^p\big) \big\|_{L^2} &\lesssim
(1+\tau)^{-\frac{51}{40}+ \e_{2}p} \|u-v\|_{X_{2}(t)}\big(\|u\|^{p-1}_{X_{2}(t)}+\|v\|^{p-1}_{X_{2}(t)}\big).
\end{align*}
This completes the proof of inequality (\ref{inequality5.2}).
\end{proof}

\section{Proof of main result (II)} \label{Semi-linear.2}
This section is devoted to the proof of Theorem \ref{theorem4.1}. 
\begin{proof}[Proof of Theorem \ref{theorem4.1}]
We introduce the solution space
$$ Y(t):= \mathcal{C}\big([0,t],H^{\frac{n}{2}+\e}\big)\cap \mathcal{C}^1\big([0,t],H^{\frac{n}{2}+\e}\big) $$
with the norm
\begin{align*}
\|u\|_{Y(t)}:= &\sup_{0\le \tau \le t} \Big( \ell(\tau)\|u(\tau,\cdot)\|_{L^2} + (1+\tau)^{\frac{n-1+\e}{4}}\big\|\nabla^{\frac{n}{2}+\e} u(\tau,\cdot)\big\|_{L^2} \\ 
&\hspace{2cm}+ (1+\tau)^{\frac{n}{8}}\|u_t(\tau,\cdot)\|_{L^2}+ (1+\tau)^{\frac{n+\e}{4}}\big\|\nabla^{\frac{n}{2}+\e} u_t(\tau,\cdot)\big\|_{L^2}\Big),
\end{align*}
where $\ell(\tau)$ is defined in the previous section. In the sequel, we follow the strategy in the previous section. Then, we have the integral equation corresponding to \eqref{equation4.1} in the following form:
$$ u(t,x)= u^{\text{ln}}(t,x) + \int_0^t \mathcal{K}_1(t-\tau,x) \ast_x \big(a \circ \nabla |u_t(\tau,x)|^p\big) d\tau=: u^{\text{ln}}(t,x)+ \tilde{u}^{\text{nl}}(t,x). $$
We define a mapping $\Phi: \,\, Y(t) \longrightarrow Y(t)$ in the following way:
$$ \Phi[u](t,x)= u^{\text{ln}}(t,x)+ \tilde{u}^{\text{nl}}(t,x). $$
As we see in the proof of Theorem \ref{theorem3.1}, the proof of Theorem \ref{theorem4.1} is reduced to prove the following estimates:
\begin{align}
\|\Phi[u]\|_{Y(t)}& \lesssim \|(u_0,u_1)\|_{\mathcal{B}}+ \|u\|^p_{Y(t)}, \label{inequality4.1} \\
\|\Phi[u]- \Phi[v]\|_{Y(t)}& \lesssim \|u-v\|_{Y(t)} \big(\|u\|^{p-1}_{Y(t)}+ \|v\|^{p-1}_{Y(t)}\big). \label{inequality4.2}
\end{align}
Before indicating the both above inequalities, the application of Proposition \ref{SobolevEmbedding} gives
\begin{align*}
\|u_t(\tau,\cdot) \|_{L^\ity} &\lesssim \|u_t(\tau,\cdot) \|^{1-\theta}_{L^2}\,\|u_t(\tau,\cdot) \|^{\theta}_{\dot{H}^{\frac{n}{2}+\varepsilon}}\qquad \text{with }\theta= \frac{\frac{n}{2}}{\frac{n}{2}+\e} \\
&\lesssim (1+\tau)^{-\frac{n}{4}} \|u\|_{Y(t)}.
\end{align*}
By the same fashion as in the proof of Theorem \ref{theorem3.1}, we obtain the following auxiliary estimates for any $p\ge 2$:
\begin{align*}
\big\||u_t(\tau,\cdot)|^p\big\|_{L^1}= \|u_t(\tau,\cdot)\|^p_{L^p} &\lesssim (1+\tau)^{-\frac{n}{4}(p-1)}\|u\|^p_{Y(\tau)}, \\ 
\big\||u_t(\tau,\cdot)|^p\big\|_{L^2}= \|u_t(\tau,\cdot)\|^p_{L^{2p}} &\lesssim (1+\tau)^{-\frac{n}{4}(p-\frac{1}{2})}\|u\|^p_{Y(\tau)}, \\
\big\|\nabla |u_t(\tau,\cdot)|^p\big\|_{L^2} &\lesssim (1+\tau)^{-\frac{n}{4}(p-\frac{1}{2})-\frac{1}{4}}\|u\|^p_{Y(\tau)}.
\end{align*}
\par \textit{First let us prove the inequality (\ref{inequality4.1}).} As in the proof of the estimate \eqref{inequality3.1}, 
we only show the estimate
\begin{equation}
\|\tilde{u}^{\text{nl}}\|_{Y(t)} \lesssim \|u\|^p_{Y(t)}. \label{inequality4.3}
\end{equation}
We will follow by same method as in the previous section. Our proof is divided into two steps. \medskip

Step 1: $\quad$ We may estimate the norm $\|\tilde{u}^{\text{nl}}(t,\cdot)\|_{L^2}$ as follows:
\begin{align*}
\|\tilde{u}^{\text{nl}}(t,\cdot)\|_{L^2} &\lesssim \int_0^t \big\|\nabla \mathcal{K}_{1L}(t-\tau,x) \ast_x |u_t(\tau,x)|^p\big\|_{L^2} d\tau \\
&\qquad+ \int_0^t \big\|\nabla \big( \mathcal{K}_{1M}(t-\tau,x) + \mathcal{K}_{1H}(t-\tau,x) \big) \ast_x |u_t(\tau,x)|^p\big\|_{L^2} d\tau
\end{align*}
\begin{align*}
&\lesssim \int_0^{t/2} (1+t-\tau)^{-\frac{n}{8}}\big\||u_t(\tau,\cdot)|^p\big\|_{L^1} d\tau+ \int_{t/2}^t \big\||u_t(\tau,\cdot)|^p\big\|_{L^2} d\tau \\
&\qquad+ \int_0^t (1+t-\tau)^{-\frac{3}{2}}\big\||u_t(\tau,\cdot)|^p\big\|_{L^2} d\tau \\
&\lesssim \Big(\int_0^{t/2} (1+t-\tau)^{-\frac{n}{8}}(1+\tau)^{-\frac{n}{4}(p-1)} d\tau+ \int_{t/2}^t (1+\tau)^{-\frac{n}{4}(p-\frac{1}{2})} d\tau \\
&\qquad +\int_0^t (1+t-\tau)^{-\frac{3}{2}}(1+\tau)^{-\frac{n}{4}(p-\frac{1}{2})} d\tau \Big)\|u\|^p_{Y(t)}.
\end{align*}
When $p \ge 1+\frac{4}{n}$, it follows immediately $-\frac{n}{4}(p-1) \le -1$. 
On the other hand, if $1+\frac{3}{n} \le p <1+\frac{4}{n}$, we see $-1< -\frac{n}{4}(p-1) \le -\frac{3}{4}$. 
Hence, using the relations
$$ \begin{cases}
1+t- \tau \approx 1+t &\text{ if }\,\tau \in [0,t/2] \\
1+\tau \approx 1+t &\text{ if }\,\tau \in [t/2,t]
\end{cases} $$
to control the first two integrals we derive
\begin{align*}
\int_0^{t/2} (1+t-\tau)^{-\frac{n}{8}}(1+\tau)^{-\frac{n}{4}(p-1)} d\tau &\lesssim 
\begin{cases}
(1+t)^{-\frac{n}{8}} \log(t+e) &\text{ if } p \ge 1+\frac{4}{n} \\ 
(1+t)^{-\frac{n}{8}+\frac{1}{4}} &\text{ if } 1+\frac{3}{n} \le p <1+\frac{4}{n}
\end{cases} \\
&\lesssim (1+t)^{-\frac{n}{8}+\frac{1}{4}}
\end{align*}
for any $p\ge 1+\frac{3}{n}$, and
$$ \int_{t/2}^t (1+\tau)^{-\frac{n}{4}(p-\frac{1}{2})} d\tau \lesssim (1+t)^{-\frac{n}{8}+\frac{1}{4}}, $$
where we used the fact that $-\frac{n}{4}(p-\frac{1}{2})+1 \le -\frac{n}{8}+\frac{1}{4}$ since the condition $p\ge 1+\frac{3}{n}$ holds from \eqref{exponent4.1}. After applying Lemma \ref{LemmaIntegral.1}, we arrive at the following estimate for the third integral:
$$ \int_0^t (1+t-\tau)^{-\frac{3}{2}}(1+\tau)^{-\frac{n}{4}(p-\frac{1}{2})} d\tau \lesssim (1+t)^{-\min\big\{\frac{3}{2},\,\frac{n}{4}(p-\frac{1}{2})\big\}} \lesssim (1+t)^{-\frac{n}{8}+\frac{1}{4}}. $$
Therefore, we have proved that
$$ \|\tilde{u}^{\text{nl}}(t,\cdot)\|_{L^2} \lesssim (1+t)^{-\frac{n}{8}+\frac{1}{4}} \|u\|^p_{Y(t)} \lesssim
\begin{cases}
\sqrt{\log(t+e)} \|u\|^p_{Y(t)} &\text{ if }\,n=2, \\
(1+t)^{-\frac{n}{8}+ \frac{1}{4}} \|u\|^p_{Y(t)} &\text{ if }\,n=3,4.
\end{cases} $$
Step 2: $\quad$ By using the same ideas, we may control the remaining norms
$$ \big\|\nabla^{\frac{n}{2}+\e} \tilde{u}^{\text{nl}}(t,\cdot)\big\|_{L^2}, \quad \|\tilde{u}_t^{\text{nl}}(t,\cdot)\|_{L^2} \quad \text{ and }\quad \big\|\nabla^{\frac{n}{2}+\e} \tilde{u}^{\text{nl}}_t(t,\cdot)\big\|_{L^2} $$
as follows:
\begin{align*}
\big\|\nabla^{\frac{n}{2}+\e} \tilde{u}^{\text{nl}}(t,\cdot)\big\|_{L^2} &\lesssim \int_0^t \big\|\nabla^{\frac{n}{2}+1+\e} \mathcal{K}_{1L}(t-\tau,x) \ast_x |u_t(\tau,x)|^p\big\|_{L^2} d\tau \\
&\qquad+ \int_0^t \big\|\nabla^{\frac{n}{2}+\e} \big( \mathcal{K}_{1M}(t-\tau,x) + \mathcal{K}_{1H}(t-\tau,x) \big) \ast_x \nabla |u_t(\tau,x)|^p\big\|_{L^2} d\tau
\end{align*}
\begin{align*}
&\lesssim \int_0^{t/2} (1+t-\tau)^{-\frac{n+\e}{4}}\big\||u_t(\tau,\cdot)|^p\big\|_{L^1} d\tau \\
&\qquad+ \int_{t/2}^t (1+t-\tau)^{-\frac{1}{4}(\frac{n}{2}+\e)}\big\||u_t(\tau,\cdot)|^p\big\|_{L^2} d\tau \\
&\qquad+ \int_0^t (1+t-\tau)^{-2+\frac{n}{4}+\frac{\e}{2}}\big\|\nabla |u_t(\tau,\cdot)|^p\big\|_{L^2} d\tau \\
&\lesssim \Big(\int_0^{t/2} (1+t-\tau)^{-\frac{n+\e}{4}}(1+\tau)^{-\frac{n}{4}(p-1)} d\tau \\
&\qquad+ \int_{t/2}^t (1+t-\tau)^{-\frac{1}{4}(\frac{n}{2}+\e)}(1+\tau)^{-\frac{n}{4}(p-\frac{1}{2})} d\tau \\
&\qquad+ \int_0^t (1+t-\tau)^{-2+\frac{n}{4}+\frac{\e}{2}}(1+\tau)^{-\frac{n}{4}(p-\frac{1}{2})-\frac{1}{4}} d\tau \Big)\|u\|^p_{Y(t)}.
\end{align*}
By the similar way to Step 1, we gain
$$ \big\|\nabla^{\frac{n}{2}+\e} \tilde{u}^{\text{nl}}(t,\cdot)\big\|_{L^2} \lesssim (1+t)^{-\frac{n-1+\e}{4}} \|u\|^p_{Y(t)}. $$
Furthermore, one gets
\begin{align*}
\|\tilde{u}_t^{\text{nl}}(t,\cdot)\|_{L^2} &\lesssim \int_0^t \big\|\partial_t\nabla \mathcal{K}_{1L}(t-\tau,x) \ast_x |u_t(\tau,x)|^p\big\|_{L^2} d\tau \\
&\qquad+ \int_0^t \big\|\partial_t\nabla \big( \mathcal{K}_{1M}(t-\tau,x) + \mathcal{K}_{1H}(t-\tau,x) \big) \ast_x |u_t(\tau,x)|^p\big\|_{L^2} d\tau \\
&\lesssim \int_0^{t/2} (1+t-\tau)^{-\frac{n}{8}-\frac{1}{4}}\big\||u_t(\tau,\cdot)|^p\big\|_{L^1} d\tau+ \int_{t/2}^t (1+t-\tau)^{-\frac{1}{4}}\big\||u_t(\tau,\cdot)|^p\big\|_{L^2} d\tau \\
&\qquad+ \int_0^t e^{-c(t-\tau)} \big\|\nabla |u_t(\tau,\cdot)|^p\big\|_{L^2} d\tau+ \int_0^t (1+t-\tau)^{-\frac{5}{2}} \big\||u_t(\tau,\cdot)|^p\big\|_{L^2} d\tau \\
&\lesssim \Big(\int_0^{t/2} (1+t-\tau)^{-\frac{n}{8}-\frac{1}{4}}(1+\tau)^{-\frac{n}{4}(p-1)} d\tau \\
&\qquad+ \int_{t/2}^t (1+t-\tau)^{-\frac{1}{4}}(1+\tau)^{-\frac{n}{4}(p-\frac{1}{2})} d\tau \\
&\qquad+ \int_0^t e^{-c(t-\tau)}(1+\tau)^{-\frac{n}{4}(p-\frac{1}{2})-\frac{1}{4}} d\tau \\
&\qquad+ \int_0^t (1+t-\tau)^{-\frac{5}{2}} (1+\tau)^{-\frac{n}{4}(p-\frac{1}{2})} d\tau \Big)\|u\|^p_{Y(t)}
\end{align*}
Then, an analogous treatment as we estimated in Step 1 leads to
\begin{align*}
\int_0^{t/2} (1+t-\tau)^{-\frac{n}{8}-\frac{1}{4}}(1+\tau)^{-\frac{n}{4}(p-1)} d\tau &\lesssim (1+t)^{-\frac{n}{8}} \\
\int_{t/2}^t (1+t-\tau)^{-\frac{1}{4}}(1+\tau)^{-\frac{n}{4}(p-\frac{1}{2})} d\tau &\lesssim (1+t)^{-\frac{n}{8}} \\
\int_0^t (1+t-\tau)^{-\frac{5}{2}} (1+\tau)^{-\frac{n}{4}(p-\frac{1}{2})} d\tau &\lesssim (1+t)^{-\frac{n}{8}}.
\end{align*}
After employing Lemma \ref{LemmaIntegral.2}, one has
\begin{align*}
\int_0^t e^{-c(t-\tau)}(1+\tau)^{-\frac{n}{4}(p-\frac{1}{2})-\frac{1}{4}} d\tau \lesssim (1+t)^{-\frac{n}{4}(p-\frac{1}{2})-\frac{1}{4}} &\lesssim (1+t)^{-\frac{n}{8}-1} \\
&\lesssim (1+t)^{-\frac{n}{8}}
\end{align*}
because of the hypothesis (\ref{exponent4.1}). All the above estimates follow that
$$ \|\tilde{u}_t^{\text{nl}}(t,\cdot)\|_{L^2} \lesssim (1+t)^{-\frac{n}{8}} \|u\|^p_{Y(t)}. $$
Now let us control the norm $\big\|\nabla^{\frac{n}{2}+\e} \tilde{u}_t^{\text{nl}}(t,\cdot)\big\|_{L^2}$ in the following way:
\begin{align*}
\big\|\nabla^{\frac{n}{2}+\e} \tilde{u}_t^{\text{nl}}(t,\cdot)\big\|_{L^2} &\lesssim \int_0^t \big\|\partial_t\nabla^{\frac{n}{2}+1+\e} \mathcal{K}_{1L}(t-\tau,x) \ast_x |u_t(\tau,x)|^p\big\|_{L^2} d\tau \\
&\qquad+ \int_0^t \big\|\partial_t \nabla^{\frac{n}{2}+\e} \big( \mathcal{K}_{1M}(t-\tau,x) + \mathcal{K}_{1H}(t-\tau,x) \big) \ast_x \nabla|u_t(\tau,x)|^p\big\|_{L^2} d\tau \\
&\lesssim \int_0^{t/2} (1+t-\tau)^{-\frac{n+1+\e}{4}}\big\||u_t(\tau,\cdot)|^p\big\|_{L^1} d\tau \\
&\qquad+ \int_{t/2}^t (1+t-\tau)^{-\frac{1}{4}(\frac{n}{2}+1+\e)}\big\||u_t(\tau,\cdot)|^p\big\|_{L^2} d\tau \\
&\qquad+ \int_0^t e^{-c(t-\tau)} (t-\tau)^{-\frac{1}{4}(\frac{n}{2}+1+\varepsilon)} \big\||u_t(\tau,\cdot)|^p\big\|_{L^2} d\tau \\
&\qquad+ \int_0^t (1+t-\tau)^{-3+\frac{n}{4}+\frac{\e}{2}} \big\|\nabla |u_t(\tau,\cdot)|^p\big\|_{L^2} d\tau \\
&\lesssim \Big(\int_0^{t/2} (1+t-\tau)^{-\frac{n+1+\e}{4}}(1+\tau)^{-\frac{n}{4}(p-1)} d\tau \\
&\qquad+ \int_{t/2}^t (1+t-\tau)^{-\frac{1}{4}(\frac{n}{2}+1+\e)}(1+\tau)^{-\frac{n}{4}(p-\frac{1}{2})} d\tau \\
&\qquad+ \int_0^t  e^{-c(t-\tau)} (t-\tau)^{-\frac{1}{4}(\frac{n}{2}+1+\varepsilon)} (1+\tau)^{-\frac{n}{4}(p-\frac{1}{2})} d\tau \\
&\qquad+ \int_0^t (1+t-\tau)^{-3+\frac{n}{4}+\frac{\e}{2}} (1+\tau)^{-\frac{n}{4}(p-\frac{1}{2})-\frac{1}{4}} d\tau \Big)\|u\|^p_{Y(t)},
\end{align*}
where we used the estimate \eqref{lemma2.5.4} from Lemma \ref{lemma2.2}. Analogously to the estimation for $\|\tilde{u}_t^{\text{nl}}(t,\cdot)\|_{L^2}$, we may derive
$$ \big\|\nabla^{\frac{n}{2}+\e} \tilde{u}_t^{\text{nl}}(t,\cdot)\big\|_{L^2} \lesssim (1+t)^{-\frac{n+\e}{4}} \|u\|^p_{Y(t)}. $$
Therefore, from the definition of the norm in $Y(t)$ we obtain immediately the inequality (\ref{inequality4.3}).
\par \textit{Next let us prove the inequality \eqref{inequality4.2}.} We shall follow the strategy used in the proof of the inequality (\ref{inequality4.3}). The new difficulty is to require the estimates for the term
$$|u_t(\tau,\cdot)|^p- |v_t(\tau,\cdot)|^p $$
in $L^1$, $L^2$ and $\dot{H}^{1}$. Then, repeating an analogous treatment as in the proof of the inequality (\ref{inequality4.3}) we may conlcude the inequality (\ref{inequality4.2}). Indeed, by using H\"{o}lder's inequality we get
\begin{align*}
\big\||u_t(\tau,\cdot)|^p-|v_t(\tau,\cdot)|^p\big\|_{L^1} &\lesssim \|u_t(\tau,\cdot)- v_t(\tau,\cdot)\|_{L^{p}} \big(\|u_t(\tau,\cdot)\|^{p-1}_{L^{p}}+\|v_t(\tau,\cdot)\|^{p-1}_{L^{p}}\big) \\ 
\big\||u_t(\tau,\cdot)|^p- |v_t(\tau,\cdot)|^p\big\|_{L^2} &\lesssim \|u_t(\tau,\cdot)- v_t(\tau,\cdot)\|_{L^{2p}} \big(\|u_t(\tau,\cdot)\|^{p-1}_{L^{2p}}+\|v_t(\tau,\cdot)\|^{p-1}_{L^{2p}}\big).
\end{align*}
Analogously to the proof of (\ref{inequality4.3}), applying Proposition \ref{fractionalGagliardoNirenberg} to the norms
$$ \|u_t(\tau,\cdot)- v_t(\tau,\cdot)\|_{L^\eta}, \quad \|u_t(\tau,\cdot)\|_{L^\eta}, \quad \|v_t(\tau,\cdot)\|_{L^\eta} $$
with $\eta=p$ and $\eta=2p$ we may arrive at the following estimates:
\begin{align*}
\big\||u_t(\tau,\cdot)|^p-|v_t(\tau,\cdot)|^p\big\|_{L^1} &\lesssim (1+\tau)^{-\frac{n}{4}(p-1)} \|u-v\|_{Y(t)}\big(\|u\|^{p-1}_{Y(t)}+\|v\|^{p-1}_{Y(t)}\big),\\
\big\||u_t(\tau,\cdot)|^p- |v_t(\tau,\cdot)|^p\big\|_{L^2} &\lesssim (1+\tau)^{-\frac{n}{4}(p-\frac{1}{2})} \|u-v\|_{Y(t)}\big(\|u\|^{p-1}_{Y(t)}+\|v\|^{p-1}_{Y(t)}\big).
\end{align*}
We also obtain the following estimates as \eqref{ineq:3.4}:
\begin{equation*}
\begin{split}
&\big\| \nabla \big(|u_{t}(\tau,\cdot)|^p-|v_{t}(\tau,\cdot)|^p\big) \big\|_{L^2} \\
&\qquad \le C \|u_{t}(\tau, \cdot) \|_{L^\infty}^{p-1}  \|\nabla u_{t}(\tau, \cdot)  - \nabla v_{t}(\tau, \cdot) \|_{L^2} \\
&\qquad\quad + C  \|\nabla v_{t}(\tau, \cdot) \|_{L^2} \big(\|u_{t}(\tau, \cdot) \|_{L^\infty}^{p-2} 
+\|v_{t}(\tau, \cdot) \|_{L^\infty}^{p-2}\big) \|u_{t}(\tau, \cdot) -v_{t}(\tau, \cdot) \|_{L^\infty},
\end{split}
\end{equation*}
which leads the estimate 
\begin{equation} 
\begin{split}
\big\|\nabla \big(|u_{t}(\tau,\cdot)|^p-|v_{t}(\tau,\cdot)|^p\big) \big\|_{L^2} &\lesssim
(1+\tau)^{-\frac{n}{4}(p-\frac{1}{2})-\frac{1}{4}} \|u-v\|_{Y(\tau)}\big(\|u\|^{p-1}_{Y(\tau)}+\|v\|^{p-1}_{Y(\tau)}\big). 
\end{split}
\end{equation}
This completes the proof of inequality (\ref{inequality4.2}).
\end{proof}

\section{Further discussions} \label{Further.Sec}
\subsection{Large time behavior of global solutions}
This subsection is to discuss the large time behavior of the derived global solutions to (\ref{equation3.1}) and (\ref{equation4.1}) in Theorems \ref{theorem3.1} and \ref{theorem4.1}, respectively. Throughout this subsection, we denote some quantities and some kernels as follows:
\begin{align*}
P_j &:= \int_{\R^n} u_j(y)dy \quad \text{ with }j=0,1,\\ 
\mathcal{H}_0(t,x) &:= \mathcal{F}^{-1}_{\xi \to x}\Big(e^{-\frac{t|\xi|^4}{2}}\cos(t|\xi|)\Big)(t,x), \\
\mathcal{H}_1(t,x) &:= \mathcal{F}^{-1}_{\xi \to x}\Big(e^{-\frac{t|\xi|^4}{2}}\frac{\sin(t|\xi|)}{|\xi|}\Big)(t,x).
\end{align*}
Let us return to the Cauchy problems (\ref{equation3.1}) and (\ref{equation4.1}) in the following common form:
\begin{equation}
\begin{cases}
u_{tt}- \Delta u+ \nu (-\Delta)^2 u_t= a \circ \nabla \big|\partial_t^j u\big|^p, &\quad x\in \R^n,\, t > 0, \\
u(0,x)= u_0(x),\quad u_t(0,x)=u_1(x), &\quad x\in \R^n,
\end{cases}
\label{equation3-4.1}
\end{equation}
where $j=0,1$. We intend to prove the large time behavior of global solutions to (\ref{equation3-4.1}) in the following result.
\begin{theorem} \label{theorem6.1}
Let $\e$ is a sufficiently small positive constant. We assume that the exponent $p$ and the space dimension $n$ satisfy the following conditions:
\begin{equation} \label{exponent6.1}
\begin{cases}
p> 1+\dfrac{4}{n-1} \quad \text{ and }\quad n=2,3,4,5 &\text{ if }\, j=0, \\
p> 1+\dfrac{3}{n} \quad \text{ and }\quad n=2,3 &\text{ if }\, j=1, \\
p\ge 2 \quad \text{ and }\quad n=4 &\text{ if }\, j=1.
\end{cases}
\end{equation}
Moreover, we choose the initial data
$$ \begin{cases}
(u_0,u_1) \in \big(L^{1,1} \cap H^{\frac{n}{2}+\e}\big) \times \big(L^{1,1}\cap H^{\frac{n}{2}-1+\e}\big) &\text{ if }\, j=0, \\
(u_0,u_1) \in \big(L^{1,1} \cap H^{\frac{n}{2}+1+\e}\big) \times \big(L^{1,1}\cap H^{\frac{n}{2}+\e}\big) &\text{ if }\, j=1.
\end{cases} $$
Then, the global (in time) small data solutions to (\ref{equation3-4.1}) enjoy the following estimate for $t\gg 1$:
\begin{equation}
\big\|u(t,\cdot)- P_0\,\mathcal{H}_0(t,\cdot)- P_1\,\mathcal{H}_1(t,\cdot) \big\|_{L^2}= o\big(t^{-\frac{n}{8}+ \frac{1}{4}}\big). \label{LargetimeEstimate6.1}
\end{equation}
\end{theorem}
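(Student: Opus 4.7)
The plan is to start from the Duhamel representation
$$u(t,x) = u^{\text{ln}}(t,x) + u^{\text{nl}}(t,x)$$
used in the proofs of Theorems \ref{theorem3.1} and \ref{theorem4.1}, and to split the error
$$u(t,x) - P_0 \mathcal{H}_0(t,x) - P_1 \mathcal{H}_1(t,x) = \bigl[u^{\text{ln}}(t,x) - P_0 \mathcal{H}_0(t,x) - P_1 \mathcal{H}_1(t,x)\bigr] + u^{\text{nl}}(t,x),$$
controlling the two pieces separately. The goal is to upgrade each of them from the $O(t^{-n/8+1/4})$ already implied by Lemma \ref{lemma2.2} and the global existence theorems to the strictly faster little-$o$ decay.

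For the linear piece I would insert the decomposition $1 = \chi_L + \chi_M + \chi_H$ in Fourier space. In the low-frequency region, the asymptotic expansion $\lambda_{\pm}(\xi) \sim -\tfrac{\nu}{2}|\xi|^4 \pm i|\xi|$ (together with its next-order corrections in $|\xi|$) yields
$$\chi_L(|\xi|)\bigl|\widehat{\mathcal{K}_0}(t,\xi) - e^{-t|\xi|^4/2}\cos(t|\xi|)\bigr| + \chi_L(|\xi|)\Bigl|\widehat{\mathcal{K}_1}(t,\xi) - e^{-t|\xi|^4/2}\tfrac{\sin(t|\xi|)}{|\xi|}\Bigr| \lesssim |\xi|^{\alpha} e^{-ct|\xi|^4}$$
for some $\alpha > 0$, i.e.\ extra positive powers of $|\xi|$ which translate into strictly faster $t$-decay via Plancherel. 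Coupled with the Taylor-type bound
$$\bigl|\widehat{u_j}(\xi) - P_j\bigr| \le \int_{\R^n}|e^{-ix\cdot\xi} - 1|\,|u_j(x)|\,dx \lesssim |\xi|\,\|u_j\|_{L^{1,1}},$$
guaranteed by the weighted $L^{1,1}$ assumption, these $|\xi|$-gains exceed those needed to reach $t^{-n/8+1/4}$. For the middle and high frequencies, the factor $e^{-t|\xi|^4/2}$ in $\mathcal{H}_k$ gives exponential decay of its non-low part, while Lemma \ref{lemma2.2} applied with $\beta_1 = 0$ and $\beta_2$ chosen according to the Sobolev regularity hypotheses on $(u_0, u_1)$ yields arbitrarily fast polynomial decay of the middle/high contribution of $u^{\text{ln}}$; both of these are $o(t^{-n/8+1/4})$.

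For the nonlinear piece, the condition \eqref{exponent6.1} is strictly stronger than \eqref{exponent3.1} (when $j=0$) or \eqref{exponent4.1} (when $j=1$). Rewinding the time-integral estimates used to bound $\|u^{\text{nl}}(t,\cdot)\|_{L^2}$ in Sections \ref{Semi-linear.1} and \ref{Semi-linear.2}, the extra gap $p - p_c > 0$ supplies a factor $(1+\tau)^{-\eta}$ for some $\eta = \eta(p, n) > 0$ in the borderline time-integrand, producing
$$\|u^{\text{nl}}(t,\cdot)\|_{L^2} \lesssim (1+t)^{-n/8 + 1/4 - \eta}\bigl(\|u\|_{X_1(t)} + \|u\|_{X_2(t)} + \|u\|_{Y(t)}\bigr)^p,$$
and the smallness of the data combined with the global bounds from Theorems \ref{theorem3.1} and \ref{theorem4.1} closes this as $o(t^{-n/8+1/4})$.

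The main obstacle will be the low-frequency linear analysis: one must simultaneously exploit the oscillation in the kernel, the extra $|\xi|$ from the weighted $L^{1,1}$ hypothesis, and the damping envelope $e^{-ct|\xi|^4}$, and combine these into a single genuine little-$o$ bound rather than a borderline $O$. A natural way to promote the resulting big-$O$ gain into little-$o$ is a dominated-convergence argument after the parabolic-type rescaling $\xi \to t^{-1/4}\xi$ in the Plancherel integral, or a density argument approximating $(u_0, u_1)$ by Schwartz functions sharing the same first moments $P_0, P_1$; in either case, the $|\xi|$-gain produced in the first step is what allows the error to be absorbed uniformly in $t$.
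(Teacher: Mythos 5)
Your proposal is essentially correct, and for the nonlinear piece it coincides with the paper's argument: both reuse the bounds $\big\||\partial_t^j u(\tau,\cdot)|^p\big\|_{L^1}$, $\big\||\partial_t^j u(\tau,\cdot)|^p\big\|_{L^2}$ from the proofs of Theorems \ref{theorem3.1} and \ref{theorem4.1} inside the Duhamel integral and exploit strict supercriticality to squeeze out an extra $(1+t)^{-\epsilon}$, hence little-$o$. The genuine difference is in the linear piece: the paper does not re-derive the asymptotic profile of $u^{\text{ln}}$ at all, but simply invokes Proposition \ref{Useful.Prop} (Theorem 1.3 of \cite{IkehataIyota}) with $\ell=\frac n2+\e$ (resp.\ $\ell=\frac n2+1+\e$), whose right-hand side is already $O(t^{-\frac n8})=o(t^{-\frac n8+\frac14})$. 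Your plan instead reconstructs that result from scratch via the frequency splitting, the expansion of $\lambda_\pm$, and the moment bound $|\widehat{u_j}(\xi)-P_j|\lesssim|\xi|\,\|u_j\|_{L^{1,1}}$; this is essentially the proof of the cited result, so it buys self-containedness at the cost of length. Note also that your final dominated-convergence/density step is superfluous: each power of $|\xi|$ gained in the low-frequency region converts via Plancherel into a factor $t^{-1/4}$, so the linear error is $O(t^{-\frac n8})$ outright, and no borderline promotion from big-$O$ to little-$o$ is needed. Two small caveats: (i) for $j=1$, $n=4$, $p=2$ the condition \eqref{exponent6.1} is \emph{not} strictly stronger than \eqref{exponent4.1}; what saves the argument is that the relevant threshold for the Duhamel integral is $\frac n4(p-1)>\frac34$, i.e.\ $p>1+\frac3n=\frac74$, which $p=2$ satisfies strictly, so your ``extra $(1+\tau)^{-\eta}$'' mechanism still applies once the critical value is read off from the integrand rather than from \eqref{exponent4.1}; (ii) for $j=0$, $n=5$ you must run the nonlinear estimate in the modified space $X_2(t)$ with the bounds \eqref{eq:3.10}--\eqref{eq:3.11} (your displayed bound correctly allows for this).
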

In order to prove our main result in this section, we need the following auxiliary estimates.
\begin{proposition}[Theorem 1.3 in \cite{IkehataIyota}] \label{Useful.Prop}
Let $1\le n\le 5$ and $\ell\ge 1$. Let us assume $(u_0,u_1) \in \big(L^{1,1} \cap H^\ell\big) \times \big(L^{1,1}\cap H^{\ell-1}\big)$ in (\ref{equation1.2}). Then, the solutions to (\ref{equation1.2}) satisfy the following estimate for $t\gg 1$:
\begin{align}
&\big\|u(t,\cdot)- P_0\,\mathcal{H}_0(t,\cdot)- P_1\,\mathcal{H}_1(t,\cdot)\big\|_{L^2} \nonumber \\ 
&\qquad \lesssim t^{-\frac{n}{8}-\frac{1}{4}}\|u_0\|_{L^{1,1}}+ t^{-\frac{n}{8}}\|u_1\|_{L^{1,1}}+ e^{-ct}\|(u_0,u_1)\|_{(L^1\cap L^2)\times (L^1\cap L^2)} \nonumber \\
&\hspace{5.7cm}+ t^{-\frac{\ell}{2}}\big(\|u_0\|_{H^\ell}+\|u_1\|_{H^{\ell-1}}\big), \label{prop.6.1}
\end{align}
where $c$ is a suitable positive constant.
\end{proposition}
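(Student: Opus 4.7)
The plan is to work on the Fourier side, comparing the exact propagator kernels $\widehat{\mathcal{K}_0},\widehat{\mathcal{K}_1}$ with the model kernels $\widehat{\mathcal{H}_0}(t,\xi)=e^{-t|\xi|^4/2}\cos(t|\xi|)$ and $\widehat{\mathcal{H}_1}(t,\xi)=e^{-t|\xi|^4/2}\sin(t|\xi|)/|\xi|$ in the low-frequency zone, while absorbing the middle- and high-frequency leftovers by means of the pointwise bounds collected in Lemma~\ref{lemma2.1}. The starting identity would be
\begin{equation*}
\widehat{u}(t,\xi)-P_0\widehat{\mathcal{H}_0}(t,\xi)-P_1\widehat{\mathcal{H}_1}(t,\xi) = \sum_{j=0}^{1} \widehat{\mathcal{K}_j}(t,\xi)\big(\widehat{u_j}(\xi)-P_j\big)+\sum_{j=0}^{1}\big(\widehat{\mathcal{K}_j}(t,\xi)-\widehat{\mathcal{H}_j}(t,\xi)\big)P_j,
\end{equation*}
which I would then cut into low/middle/high frequency pieces via $1=\chi_L+\chi_M+\chi_H$ and estimate in $L^2$ using Plancherel.

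For the low-frequency data-perturbation pieces, I would first note the elementary bound $|\widehat{u_j}(\xi)-P_j|\lesssim |\xi|\,\|u_j\|_{L^{1,1}}$, which follows from writing $\widehat{u_j}(\xi)-P_j=\int u_j(y)(e^{-i y\cdot\xi}-1)\,dy$ and using $|e^{-iy\cdot\xi}-1|\le |y\cdot\xi|$. Combining this with $\chi_L|\widehat{\mathcal{K}_0}|\lesssim e^{-ct|\xi|^4}$ and $\chi_L|\widehat{\mathcal{K}_1}|\lesssim |\xi|^{-1}e^{-ct|\xi|^4}$ from Lemma~\ref{lemma2.1} and the scaling $\xi\mapsto t^{-1/4}\eta$, I would obtain $t^{-n/8-1/4}\|u_0\|_{L^{1,1}}$ and $t^{-n/8}\|u_1\|_{L^{1,1}}$, which are precisely the first two summands of \eqref{prop.6.1}. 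For the kernel-difference pieces, I would use the explicit formulas
\begin{equation*}
\widehat{\mathcal{K}_0}=e^{-a(\xi)t}\Big[\cos(\theta(\xi)t)+\tfrac{a(\xi)}{\theta(\xi)}\sin(\theta(\xi)t)\Big],\qquad \widehat{\mathcal{K}_1}=e^{-a(\xi)t}\,\frac{\sin(\theta(\xi)t)}{\theta(\xi)},
\end{equation*}
with $a(\xi)=\nu|\xi|^4/2$ and $\theta(\xi)=|\xi|\sqrt{1-\nu^2|\xi|^6/4}$, together with the Taylor expansions $\theta(\xi)-|\xi|=O(|\xi|^7)$ and $a(\xi)/\theta(\xi)=O(|\xi|^3)$ valid on $\mathrm{supp}\,\chi_L$. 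By the mean value theorem this gives $\chi_L|\widehat{\mathcal{K}_j}-\widehat{\mathcal{H}_j}|\lesssim e^{-ct|\xi|^4}\bigl(\min(t|\xi|^{7-j},|\xi|^{-j})+|\xi|^{3-j}\bigr)$, and Gaussian scaling turns these into $O(t^{-n/8-3/4})|P_j|$, which is absorbed by the two leading rates since $|P_j|\le\|u_j\|_{L^1}$.

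For the middle-frequency zone, the characteristic roots satisfy $\mathrm{Re}\,\lambda_\pm\le -c_0<0$ uniformly on $\rho/2\le|\xi|\le 4\rho$, so $\chi_M|\widehat{\mathcal{K}_j}|$ and $\chi_M|\widehat{\mathcal{H}_j}|$ are all $O(e^{-ct})$; combined with the trivial $\|\chi_M\widehat{u_j}\|_{L^2}\lesssim\|u_j\|_{L^1}$ and $|P_j|\lesssim\|u_j\|_{L^1}$, this yields the $e^{-ct}\|(u_0,u_1)\|_{(L^1\cap L^2)\times(L^1\cap L^2)}$ term. For the high-frequency zone I would invoke the regularity-loss bounds $\chi_H|\widehat{\mathcal{K}_0}|\lesssim e^{-ct|\xi|^{-2}}+|\xi|^{-6}e^{-ct|\xi|^4}$ and $\chi_H|\widehat{\mathcal{K}_1}|\lesssim|\xi|^{-4}(e^{-ct|\xi|^{-2}}+e^{-ct|\xi|^4})$ from Lemma~\ref{lemma2.1}, together with $\chi_H\widehat{\mathcal{H}_j}=O(e^{-ct})$. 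The decisive pointwise inequality here is $\sup_{|\xi|\ge 1}|\xi|^{-2\ell}e^{-ct|\xi|^{-2}}\lesssim t^{-\ell}$, which via Plancherel converts the $H^\ell$ regularity of $u_0$ and the $H^{\ell-1}$ regularity of $u_1$ into the advertised $t^{-\ell/2}(\|u_0\|_{H^\ell}+\|u_1\|_{H^{\ell-1}})$ contribution.

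The main obstacle I expect lies in the low-frequency kernel comparison: one must expand $\lambda_\pm$ to enough order that the oscillatory error $|\cos(\theta t)-\cos(|\xi|t)|\le \min(t|\theta-|\xi||,\,2)$ is tracked uniformly in $t$, and then verify that after the Gaussian weight $e^{-ct|\xi|^4}$ the integrals converge at least as fast as $t^{-n/8-1/4}$. In practice this forces a splitting at the scale $|\xi|\sim t^{-1/7}$ so that the phase-perturbation term $t|\xi|^7$ can be estimated as $t|\xi|^7$ on the inner part and as $O(1)$ on the outer part; both pieces must produce a decay rate strictly better than the two leading ones. Nothing else in the argument is subtle: once the low-frequency comparison is in hand, the rest is a bookkeeping exercise combining Plancherel's identity with the pointwise estimates of Lemma~\ref{lemma2.1}.
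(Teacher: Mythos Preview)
The paper does not supply its own proof of this proposition: it is quoted verbatim as Theorem~1.3 of \cite{IkehataIyota} and used as a black box in the proof of Theorem~\ref{theorem6.1}. So there is no ``paper's own proof'' to compare against. Your sketch is precisely the standard route by which such asymptotic-profile estimates are established (and is in spirit the argument of \cite{IkehataIyota}): Fourier splitting into $\chi_L+\chi_M+\chi_H$, the moment bound $|\widehat{u_j}(\xi)-P_j|\lesssim|\xi|\,\|u_j\|_{L^{1,1}}$ at low frequency, Taylor expansion of $\lambda_\pm$ to handle the kernel difference $\widehat{\mathcal{K}_j}-\widehat{\mathcal{H}_j}$, exponential decay in the middle zone, and the regularity-loss mechanism $|\xi|^{-2\ell}e^{-ct|\xi|^{-2}}\lesssim t^{-\ell}$ at high frequency.

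One small point of care: your ``starting identity'' should not be applied literally in the high-frequency zone. If you cut $\widehat{\mathcal{K}_j}(\widehat{u_j}-P_j)$ by $\chi_H$ and try to separate the $P_j$ piece, you run into $\|\chi_H\widehat{\mathcal{K}_0}\|_{L^2}=\infty$ (because $e^{-ct|\xi|^{-2}}\to 1$ as $|\xi|\to\infty$). What actually works---and what your last paragraph implicitly does---is to decompose directly as $\chi_H\widehat{\mathcal{K}_j}\widehat{u_j}$ (controlled by $\|u_j\|_{H^{\ell_j}}$) minus $P_j\chi_H\widehat{\mathcal{H}_j}$ (which is $O(e^{-ct})$ in $L^2$). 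With that adjustment your outline is correct.
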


\begin{proof} In order to show the proof of Theorem \ref{theorem6.1}, let us consider two cases including $j=0$ and $j=1$ individually.
\begin{itemize}[leftmargin=*]
\item \textit{Case 1}: If $j=0$, then we take $\ell= \frac{n}{2}+\e$ in Proposition \ref{Useful.Prop}. By virtue of the statement (\ref{prop.6.1}), to indicate the desired estimate (\ref{LargetimeEstimate6.1}), we need to show the following estimate instead:
\begin{equation}
\Big\|\int_0^t \mathcal{K}_1(t-\tau,x) \ast_x \nabla |u(\tau,x)|^p d\tau \Big\|_{L^2}= o\big(t^{-\frac{n}{8}+ \frac{1}{4}}\big) \label{the.6.1.1}
\end{equation}
by using the representation of solutions $u(t,x)= u^{\text{ln}}(t,x)+u^{\text{nl}}(t,x)$ to (\ref{equation3.1}) as in the proof of Theorem \ref{theorem3.1}. 
Now we assume $n=2,3,4$. 
First of all, recalling the proof of Theorem \ref{theorem3.1} we have achieved the following estimates:
\begin{align}
\big\||u(\tau,\cdot)|^p\big\|_{L^1}&\lesssim (1+\tau)^{-\frac{n}{4}(p-1)+\frac{p}{4}+\epsilon_{0}}, \label{the.6.1.1*} \\ 
\big\||u(\tau,\cdot)|^p\big\|_{L^2}&\lesssim (1+\tau)^{-\frac{n}{4}(p-\frac{1}{2})+\frac{p}{4}+\epsilon_{0}},\label{the.6.1.2*}
\end{align}
where $\epsilon_0$ is given by \eqref{eq:3.3}.
Similarly to the strategy which we have used in the proof of Theorem \ref{theorem3.1}, we separate the left-hand side term of (\ref{the.6.1.1}) into several parts as follows:
\begin{align*}
&\Big\|\int_0^t \mathcal{K}_1(t-\tau,x) \ast_x \nabla |u(\tau,x)|^p d\tau \Big\|_{L^2} \\
&\qquad \lesssim \int_0^t \big\|\nabla \mathcal{K}_{1L}(t-\tau,x) \ast_x |u(\tau,x)|^p\big\|_{L^2} d\tau \\
&\qquad\quad+ \int_0^t \big\|\nabla \big( \mathcal{K}_{1M}(t-\tau,x) + \mathcal{K}_{1H}(t-\tau,x) \big)
\ast_x |u(\tau,x)|^p\big\|_{L^2} d\tau \\
&\qquad \lesssim \int_0^{t/2} (1+t-\tau)^{-\frac{n}{8}}\big\||u(\tau,\cdot)|^p\big\|_{L^1} d\tau+ \int_{t/2}^t \big\||u(\tau,\cdot)|^p\big\|_{L^2} d\tau \\
&\qquad\quad+ \int_0^t (1+t-\tau)^{-\frac{3}{2}}\big\||u(\tau,\cdot)|^p\big\|_{L^2} d\tau \\
&\qquad \lesssim \int_0^{t/2} (1+t-\tau)^{-\frac{n}{8}}(1+\tau)^{-\frac{n}{4}(p-1)+\frac{p}{4}+\epsilon_{0}} d\tau+ \int_{t/2}^t (1+\tau)^{-\frac{n}{4}(p-\frac{1}{2})+\frac{p}{4}+\epsilon_{0}} d\tau \\
&\qquad\quad+\int_0^t (1+t-\tau)^{-\frac{3}{2}}(1+\tau)^{-\frac{n}{4}(p-\frac{1}{2})+\frac{p}{4}+\epsilon_{0}} d\tau.
\end{align*}
In addition, the condition (\ref{exponent6.1}) follows immediately $-\frac{n}{4}(p-1)+\frac{p}{4}< -\frac{3}{4}$. Thus, it implies
$$ -\frac{n}{4}(p-1)+\frac{p}{4}+\epsilon_{0}< -\frac{3}{4} \quad \text{ and }\quad -\frac{n}{4}\Big(p-\frac{1}{2}\Big)+ \frac{p}{4}+\epsilon_{0}< -\frac{n}{8}-\frac{3}{4},  $$
which lead to
\begin{align*}
&\int_0^{t/2} (1+t-\tau)^{-\frac{n}{8}}(1+\tau)^{-\frac{n}{4}(p-1)+\frac{p}{4}+\epsilon_{0}} d\tau \\
&\qquad \lesssim (1+t)^{-\frac{n}{8}}\big(1+ \log(e+t)+ (1+t)^{-\frac{n}{4}(p-1)+\frac{p}{4}+\epsilon_{0}+1}\big) \lesssim (1+t)^{-\frac{n}{8}+\frac{1}{4}-\epsilon}
\end{align*}
and
$$\int_{t/2}^t (1+\tau)^{-\frac{n}{4}(p-\frac{1}{2})+\frac{p}{4}+\epsilon_{0}} d\tau \lesssim (1+t)^{-\frac{n}{4}(p-\frac{1}{2})+\frac{p}{4}+1+\epsilon_{0}} \lesssim (1+t)^{-\frac{n}{8}+\frac{1}{4}-\epsilon} $$
for some sufficiently small constant $\epsilon>0$. Moreover, as indicated in the proof of Theorem \ref{theorem3.1}, one derives
$$ \int_0^t (1+t-\tau)^{-\frac{3}{2}}(1+\tau)^{-\frac{n}{4}(p-\frac{1}{2})+\frac{p}{4}+\epsilon_{0}} d\tau \lesssim  (1+t)^{-\frac{n}{8}-\frac{3}{4}}. $$
Summing up all the above estimates gives the estimate (\ref{the.6.1.1}) what we wanted to prove.
Next we show the estimate \eqref{the.6.1.1} for $n=5$.
Applying the estimates \eqref{eq:3.10} and \eqref{eq:3.11}, instead of \eqref{the.6.1.1*} and \eqref{the.6.1.2*},
we have 
\begin{align*}
&\Big\|\int_0^t \mathcal{K}_1(t-\tau,x) \ast_x \nabla |u(\tau,x)|^p d\tau \Big\|_{L^2} \\
&\qquad\lesssim \int_0^t \big\|\nabla \mathcal{K}_{1L}(t-\tau,x) \ast_x |u(\tau,x)|^p\big\|_{L^2} d\tau \\
&\qquad\quad+ \int_0^t \big\|\nabla \big( \mathcal{K}_{1M}(t-\tau,x) + \mathcal{K}_{1H}(t-\tau,x) \big)
\ast_x |u(\tau,x)|^p\big\|_{L^2} d\tau \\
&\qquad\lesssim \int_0^{t} (1+t-\tau)^{-\frac{5}{8}}\big\||u(\tau,\cdot)|^p\big\|_{L^1} d\tau+ \int_0^t (1+t-\tau)^{-\frac{3}{2}}\big\||u(\tau,\cdot)|^p\big\|_{L^2} d\tau \\
&\qquad\lesssim \int_0^{t} (1+t-\tau)^{-\frac{5}{8}}(1+\tau)^{-\frac{3}{4}(p-1)+\e_{2} (p-2)} d\tau \\
&\qquad\quad +\int_0^t (1+t-\tau)^{-\frac{1}{2}}(1+\tau)^{-\frac{3}{4}(p-\frac{1}{2})+\e_{2} (p-1)} d\tau.
\end{align*}
Therefore, by using Lemma \ref{LemmaIntegral.1} it holds
\begin{equation} \label{eq:5.8}
\begin{split}
& \int_0^{t} (1+t-\tau)^{-\frac{5}{8}}(1+\tau)^{-\frac{3}{4}(p-1)+\e_{2} (p-2)} d\tau \\
&\qquad \lesssim 
\begin{cases}
(1+t)^{-\frac{5}{8}} &\text{if}\ \frac{3}{4}(p-1)-\e_{2} (p-2)>1, \\
(1+t)^{-\frac{5}{8}} \log(e+t) &\text{if}\ \frac{3}{4}(p-1)-\e_{2} (p-2)=1, \\  
(1+t)^{-\frac{3}{4}(p-1)+\e_{2} (p-2)+\frac{3}{8}} &\text{if}\ \frac{3}{4}(p-1)-\e_{2} (p-2)<1
\end{cases} \\
&\qquad =o(t^{-\frac{3}{8}})
\end{split}
\end{equation}
as $t \to \infty$, because of the smallness of $\varepsilon>0$ and the fact that
$$ -\frac{3}{4}(p-1)+\e_{2}(p-2) +\frac{3}{8}< -\frac{3}{8}$$
under the assumption \eqref{exponent6.1}.
On the other hand, thanks to Lemma \ref{LemmaIntegral.1}, we also have 
\begin{align}
\int_0^t (1+t-\tau)^{-\frac{3}{2}}(1+\tau)^{-\frac{3}{4}(p-\frac{1}{2})+\e_{2} (p-1)} d\tau &\lesssim (1+t)^{-\min\big\{\frac{3}{2},\frac{3}{4}(p-\frac{1}{2})-\e_{2} (p-1)\big\}} \nonumber \\
&= o(t^{-\frac{3}{8}}) \label{eq:5.9}
\end{align}
as $t \to \infty$, where we have used the smallness of $\varepsilon>0$ and the assumption \eqref{exponent6.1}, again.
Summing up \eqref{eq:5.8} and \eqref{eq:5.9} we obtain the estimate \eqref{the.6.1.1} for $n=5$.\\
\item \textit{Case 2}: If $j=1$, then we take $\ell= \frac{n}{2}+1+\e$ in Proposition \ref{Useful.Prop}. Following the proof of Case 1, we may conclude the proof of Case 2 by the aid of the auxiliary estimates as follows:
\begin{align*}
\big\||u_t(\tau,\cdot)|^p\big\|_{L^1}&\lesssim (1+\tau)^{-\frac{n}{4}(p-1)}, \\
\big\||u_t(\tau,\cdot)|^p\big\|_{L^2}&\lesssim (1+\tau)^{-\frac{n}{4}(p-\frac{1}{2})},
\end{align*}
which we have obtained from the proof of Theorem \ref{theorem4.1}.
\end{itemize}
Conclusion, our proof is completed.
\end{proof}

\begin{remark}
\fontshape{n}
\selectfont
We want to point out that Theorem \ref{theorem6.1} is concerned with the large time behavior of global derived solutions to (\ref{equation3.1}) and (\ref{equation4.1}) in the supercritical cases only, i.e. $p> 1+\dfrac{4}{n-1}$ for all space dimensions $n=2,3,4,5$ and $p> 1+\dfrac{3}{n}$ for all space dimensions $n=2,3$, respectively. It remains an open problem to explore such result in the critical cases, i.e. $p= 1+\dfrac{4}{n-1}$ for $n=3,4,5$ to (\ref{equation3.1}) and $p= 1+\dfrac{3}{n}$ for $n=2,3$ to (\ref{equation4.1}). The fact is that the main difficulty lies on dealing with the integrals
$$ \int_0^s (1+\rho)^{-\frac{n}{4}(p-1)+\frac{p}{4}} d\rho \quad \text{ and }\quad \int_0^s (1+\rho)^{-\frac{n}{4}(p-1)} d\rho \quad \text{ for } s\gg 1, $$
which are not infinitesimal quantities of $s^{\frac{1}{4}}$ when $p= 1+\dfrac{4}{n-1}$ and $p= 1+\dfrac{3}{n}$, respectively. 
\end{remark}

\subsection{Mixed nonlinearities}
In this subsection, relying on the proof of Theorems \ref{theorem3.1} and \ref{theorem4.1} one may catch the global (in time) existence of small data solutions and their decay properties to (\ref{equation1.1}) with the nonlinear function $f(u,u_t)= \big|\partial_t^j u\big|^p+ a \circ \nabla \big|\partial_t^j u\big|^q$ with $j=0,1$, i.e. the following semi-linear equations with mixing two different kinds of nonlinearities:
\begin{equation}
\begin{cases}
u_{tt}- \Delta u+ \nu (-\Delta)^2 u_t= \big|\partial_t^j u\big|^p+ a \circ \nabla \big|\partial_t^j u\big|^q, &\quad x\in \R^n,\, t > 0, \\
u(0,x)= u_0(x),\quad u_t(0,x)=u_1(x), &\quad x\in \R^n,
\end{cases}
\label{equation3.3}
\end{equation}
where $p,q>1$. We obtain the following results.
\begin{theorem}[Equation \eqref{equation3.3} with $j=0$] \label{theorem3.31}
Let $n=2,3,4,5$ and $\e$ is a sufficiently small positive constant. Assume that the following conditions for $p$ and $q$ hold:
\begin{equation} \label{exponent3.31}
\begin{split}
&p> 6, \hspace{2.2cm} q> 5 \hspace{2.2cm} \text{if}\ n=2, \\
&p> 1+\dfrac{5}{n-1},\qquad q\ge 1+\dfrac{4}{n-1} \qquad \text{if}\ n=3,4,5.
\end{split}
\end{equation}
Then, we have the same conclusions as those in Theorem \ref{theorem3.1} and the estimates from \eqref {estimate3.1.1} to \eqref {estimate3.1.3} hold.
\end{theorem}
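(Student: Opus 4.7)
The plan is to mirror the Banach contraction argument of Theorem \ref{theorem3.1}, working on the same solution spaces $X_1(t)$ for $n=2,3,4$ and $X_2(t)$ for $n=5$, but with the Duhamel map augmented by a source-type piece:
\begin{align*}
u(t,x) = u^{\text{ln}}(t,x) &+ \int_0^t \mathcal{K}_1(t-\tau,x) \ast_x |u(\tau,x)|^p \, d\tau \\
&+ \int_0^t \mathcal{K}_1(t-\tau,x) \ast_x \big(a \circ \nabla |u(\tau,x)|^q\big) \, d\tau.
\end{align*}
The second (convection) integral lies under exactly the hypothesis that Theorem \ref{theorem3.1} imposes on its exponent (here renamed $q$), so its contribution to the $X_1$- and $X_2$-norms, together with the corresponding Lipschitz estimate, is controlled verbatim by the bounds already proved in Section \ref{Semi-linear.1}. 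All that remains is to handle the new source-type integral and to verify the matching contraction inequality.

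For the source term the only structural difference from the convection analysis is that no spatial derivative now falls on the kernel $\mathcal{K}_1$. By Lemma \ref{lemma2.2}, this replaces the low-frequency decay $(1+t-\tau)^{-n/8}$ exploited for the convection term (on $L^1$ input) by the weaker $(1+t-\tau)^{-n/8+1/4}$. Re-using the Gagliardo--Nirenberg bounds on $\||u|^p\|_{L^r}$ already derived in the proof of Theorem \ref{theorem3.1}, the decisive integral for the $L^2$-estimate of the source contribution becomes
$$\int_0^{t/2} (1+t-\tau)^{-\frac{n}{8}+\frac{1}{4}} (1+\tau)^{-\frac{n(p-1)}{4} + \frac{p}{4} + \epsilon_0}\, d\tau.$$
For this to close with the target rate $(1+t)^{-n/8+1/4}$, the exponent of $(1+\tau)$ must be strictly less than $-1$, i.e.\ $(n-1)p > n + 4 + 4\epsilon_0$. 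After taking $\epsilon_0$ sufficiently small this is precisely the hypothesis $p > 6$ for $n=2$ and $p > 1 + 5/(n-1)$ for $n=3,4,5$. The integrals over $[t/2,t]$, the middle- and high-frequency contributions, and the analogous computations for $\|\nabla^{\frac{n}{2}+\e} u\|_{L^2}$ and $\|u_t\|_{L^2}$ all close under the same arithmetic and impose no stricter constraint. The Lipschitz estimates reduce to bounding $\||u|^p - |v|^p\|_{L^r}$ in $L^1$, $L^2$ and $\dot{H}^1$ via the mean value theorem and H\"older's inequality, exactly as in the convection-side argument of Theorem \ref{theorem3.1}.

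The case $n=5$ is executed in $X_2(t)$ using the sharper $L^\infty$-interpolation and its $\e_2$-corrections, mirroring the $n=5$ portion of Theorem \ref{theorem3.1}; the same threshold $p > 1 + 5/(n-1) = 9/4$ governs the new integral once $\e_2$ is chosen small enough relative to $p - 9/4$. The main obstacle throughout is precisely this quarter-order loss of time decay in the source kernel relative to the convection kernel, which is what pushes the critical exponent on $p$ up from $1 + 4/(n-1)$ (the convection threshold on $q$) to $1 + 5/(n-1)$. Once that single observation is isolated, every other estimate in the iteration is automatic from the machinery already assembled in Section \ref{Semi-linear.1}.
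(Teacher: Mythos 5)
Your overall strategy coincides with the paper's: the paper's proof of Theorem \ref{theorem3.31} is itself only a sketch that defines the map $\Psi$ on $X_1(t)$ (resp.\ $X_2(t)$ for $n=5$) and defers everything to the estimates of Section \ref{Semi-linear.1}. Your identification of the single genuinely new ingredient --- the source integral $\int_0^t\mathcal{K}_1(t-\tau,\cdot)\ast_x|u(\tau,\cdot)|^p\,d\tau$, whose low-frequency kernel on $L^1$ data only supplies $(1+t-\tau)^{-\frac{n}{8}+\frac{1}{4}}$ (a $\sqrt{\log}$ for $n=2$) instead of the $(1+t-\tau)^{-\frac{n}{8}}$ available for the convection term --- is the right one, and your arithmetic $(n-1)p>n+4+4\epsilon_0$ correctly reproduces the thresholds $p>6$ for $n=2$ and $p>1+\frac{5}{n-1}$ for $n=3,4$.

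Your treatment of $n=5$, however, has a gap. The bound $\big\||u(\tau,\cdot)|^p\big\|_{L^1}\lesssim(1+\tau)^{-\frac{n}{4}(p-1)+\frac{p}{4}+\epsilon_0}$ that your ``decisive integral'' presumes is \emph{not} available in $X_2(t)$: because of the regularity loss, $\big\|\nabla^{\frac{5}{2}+\e}u(\tau,\cdot)\big\|_{L^2}$ decays only like $(1+\tau)^{-\frac{3}{4}+\frac{\e}{2}}$, hence $\|u(\tau,\cdot)\|_{L^\infty}\lesssim(1+\tau)^{-\frac{3}{4}+\e_2}$ and, exactly as in \eqref{eq:3.10}, one only gets $\big\||u(\tau,\cdot)|^p\big\|_{L^1}\lesssim(1+\tau)^{-\frac{3}{4}(p-1)+\e_2(p-2)}$. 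Feeding this into $\int_0^t(1+t-\tau)^{-\frac{3}{8}}(1+\tau)^{-\frac{3}{4}(p-1)+\e_2(p-2)}d\tau$ and demanding the output $(1+t)^{-\frac{3}{8}}$ forces $\frac{3}{4}(p-1)-\e_2(p-2)>1$, i.e.\ $p>\frac{7}{3}$ for small $\e$, which is strictly stronger than the claimed $p>\frac{9}{4}$; compare the analogous computation \eqref{eq:5.8}, where the convection kernel $(1+t-\tau)^{-\frac{5}{8}}$ only needs $\frac{3}{4}(p-1)>\frac{3}{4}$. So for $p\in\big(\frac{9}{4},\frac{7}{3}\big]$ your iteration does not close in $X_2(t)$, and choosing $\e_2$ ``small relative to $p-\frac{9}{4}$'' does not repair this, since the obstruction survives at $\e_2=0$. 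This imprecision is arguably inherited from the theorem statement itself, but your proposal asserts that the $\frac{9}{4}$ threshold governs the new integral for $n=5$, and the $X_2$-machinery does not deliver that; you would need either to restrict the exponent range to $p>\frac{7}{3}$ for $n=5$ or to modify the solution space yet again.
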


\begin{theorem}[Equation \eqref{equation3.3} with $j=1$] \label{theorem3.32}
Let $n=2,3,4$ and $\e$ is a sufficiently small positive constant. Assume that the following conditions for $p$ and $q$ hold:
\begin{equation} \label{exponent3.32}
p> 1+\dfrac{4}{n} \quad \text{ and }\quad q \ge 1+ \max\Big\{\frac{3}{n}, 1 \Big\}.
\end{equation}
Then, we have the same conclusions as those in Theorem \ref{theorem4.1} and the estimates from \eqref {estimate4.1.1} 
to \eqref {estimate4.1.4} hold.
\end{theorem}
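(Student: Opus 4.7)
The plan is to combine the scheme used for Theorem~\ref{theorem4.1} with a separate treatment of the new source-type nonlinearity $|u_t|^p$. I keep the solution space $Y(t)$ and the norm from the proof of Theorem~\ref{theorem4.1}, and define the mapping
\begin{align*}
\tilde{\Phi}[u](t,x) := u^{\text{ln}}(t,x) &+ \int_0^t \mathcal{K}_1(t-\tau,x) \ast_x |u_t(\tau,x)|^p \, d\tau \\
&+ \int_0^t \mathcal{K}_1(t-\tau,x) \ast_x \bigl(a \circ \nabla |u_t(\tau,x)|^q\bigr)\, d\tau .
\end{align*}
It suffices to establish $\|\tilde{\Phi}[u]\|_{Y(t)} \lesssim \|(u_0,u_1)\|_{\mathcal{B}} + \|u\|_{Y(t)}^p + \|u\|_{Y(t)}^q$ together with the analogous Lipschitz bound. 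The convection summand is controlled verbatim by the computations in the proof of Theorem~\ref{theorem4.1} with $q$ replacing $p$, and the hypothesis $q \geq 1 + \max\{3/n,1\}$ in \eqref{exponent3.32} guarantees that its contribution is dominated by $\|u\|_{Y(t)}^q$ in all four components of $\|\cdot\|_{Y(t)}$.

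For the source summand I would first record the auxiliary estimates
\begin{align*}
\bigl\||u_t(\tau,\cdot)|^p\bigr\|_{L^1} &\lesssim (1+\tau)^{-\frac{n}{4}(p-1)}\|u\|_{Y(\tau)}^p,\\
\bigl\||u_t(\tau,\cdot)|^p\bigr\|_{L^2} &\lesssim (1+\tau)^{-\frac{n}{4}(p-\frac{1}{2})}\|u\|_{Y(\tau)}^p,
\end{align*}
which follow by applying Proposition~\ref{SobolevEmbedding} to $u_t$ exactly as in the proof of Theorem~\ref{theorem4.1}. Then for each $(j,\alpha) \in \{(0,0),(0,\tfrac{n}{2}+\e),(1,0),(1,\tfrac{n}{2}+\e)\}$ I would split $\mathcal{K}_1 = \mathcal{K}_{1L} + \mathcal{K}_{1M} + \mathcal{K}_{1H}$ in the Duhamel integral and cut the time interval into $[0,t/2]$ and $[t/2,t]$, invoking Lemma~\ref{lemma2.2}. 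Compared with the convection-type integrals in the proof of Theorem~\ref{theorem4.1}, the absence of a gradient on the nonlinearity costs one factor of $(1+t-\tau)^{1/4}$ in kernel decay, but the bound for $\||u_t|^p\|_{L^r}$ is free of the loss $(1+\tau)^{-1/4}$ produced by $\||u_t|^{q-1}\nabla u_t\|_{L^2}$ in the treatment of $\nabla|u_t|^q$.

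The critical integral of the whole argument is
$$\int_0^{t/2}(1+t-\tau)^{-\frac{n}{8}+\frac{1}{4}}\,(1+\tau)^{-\frac{n}{4}(p-1)}\,d\tau \lesssim (1+t)^{-\frac{n}{8}+\frac{1}{4}},$$
which appears in the $L^2$-estimate of the source contribution and forces $-\frac{n}{4}(p-1) < -1$, that is, $p > 1+\frac{4}{n}$; this is precisely the hypothesis imposed in \eqref{exponent3.32}. All the remaining integrals---the middle/high-frequency contributions, which decay rapidly via \eqref{lemma2.5.4}, and the integrals controlling $\nabla^{n/2+\e}\tilde{u}^{\text{src}}$, $\tilde{u}^{\text{src}}_t$ and $\nabla^{n/2+\e}\tilde{u}^{\text{src}}_t$---are handled by Lemma~\ref{LemmaIntegral.1} and Lemma~\ref{LemmaIntegral.2} under the same threshold, none being more restrictive. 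The Lipschitz inequality follows the same template, combining H\"older's inequality with Proposition~\ref{fractionalGagliardoNirenberg} applied to $u_t$ to control $\||u_t|^p-|v_t|^p\|_{L^r}$ for $r=1,2$; the $\dot{H}^{1}$-type estimate required in Theorem~\ref{theorem4.1} is not needed here, because the source nonlinearity carries no spatial derivative.

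The main obstacle, I expect, is the bookkeeping: verifying that every one of the four $Y(t)$-components is dominated by the source contribution under the single threshold $p > 1+\frac{4}{n}$, and confirming that the dimensional restriction $n=2,3,4$ is preserved through the convection part (it is, because that part reuses the proof of Theorem~\ref{theorem4.1} line by line). A minor but worth-noting point is to check that the low-frequency kernel really supplies the weakened decay $(1+t-\tau)^{-\frac{n}{8}+\frac{1}{4}}$ used above (this is exactly \eqref{lemma2.5.2} with $r=1$, $\alpha=0$, $j=0$), so that the condition $p>1+\frac{4}{n}$ is sharp in the scheme rather than an artefact of loose estimation.
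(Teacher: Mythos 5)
Your proposal is correct and follows essentially the same route as the paper, which for this theorem merely defines the mapping on $Y(t)$, states the two contraction inequalities with the extra $\|u\|_{X(t)}^p$ term, and defers all estimates to the arguments of Theorems \ref{theorem3.1} and \ref{theorem4.1}. Your identification of the source-term integral $\int_0^{t/2}(1+t-\tau)^{-\frac{n}{8}+\frac14}(1+\tau)^{-\frac{n}{4}(p-1)}d\tau$ as the place where the strict threshold $p>1+\frac{4}{n}$ enters is accurate and in fact supplies detail the paper leaves implicit.
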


\begin{proof}[Proof of Theorems \ref{theorem3.31} and \ref{theorem3.32}]
We introduce the solution space
$$ X(t) \equiv
\begin{cases}
X_1(t) &\text{ if }\, n=2,3,4 \, \text{ and }\, j=0, \\
X_2(t) &\text{ if }\, n=5 \, \text{ and }\, j=0, \\
Y(t) &\text{ if }\, n=2,3,4 \, \text{ and }\, j=1,
\end{cases} $$
where the spaces $X_1(t)$, $X_2(t)$ and $Y(t)$ appear as in the proof of Theorems \ref{theorem3.1} and \ref{theorem4.1}. The solutions to (\ref{equation3.3}) can be written by the following form:
\begin{align*}
u(t,x) &= u^{\text{ln}}(t,x) + \int_0^t \mathcal{K}_1(t-\tau,x) \ast_x \big(|\partial^j_t u(\tau,x)|^p+ a \circ \nabla |\partial^j_t u(\tau,x)|^q\big) d\tau \\ 
&=: u^{\text{ln}}(t,x)+ \bar{u}^{\text{nl}}(t,x).
\end{align*}
We define a mapping $\Psi: \,\, X(t) \longrightarrow X(t)$ in the following way:
$$ \Psi[u](t,x)= u^{\text{ln}}(t,x)+ \bar{u}^{\text{nl}}(t,x). $$
Our main is to indicate that the following pair of inequalities are fulfilled:
\begin{align*}
\|\Psi[u]\|_{X(t)}& \lesssim \|(u_0,u_1)\|_{\mathcal{A}}+ \|u\|^p_{X(t)}+ \|u\|^q_{X(t)}, \\
\|\Psi[u]- \Psi[v]\|_{X(t)}& \lesssim \|u-v\|_{X(t)} \big(\|u\|^{p-1}_{X(t)}+ \|v\|^{p-1}_{X(t)}+\|u\|^{q-1}_{X(t)}+ \|v\|^{q-1}_{X(t)}\big).
\end{align*}
Then, repeating the similar approach to we did in the proof of Theorems \ref{theorem3.1} and \ref{theorem4.1} we may arrive at the desired inequalities above, which are to finish the proof of Theorems \ref{theorem3.31} and \ref{theorem3.32}.
\end{proof}

\begin{remark}
\fontshape{n}
\selectfont
In terms of the admissible exponents for $p$ and $q$ in (\ref{exponent3.31}), here one recognizes that the effect of the nonlinear convection $a \circ \nabla (|\partial^j_t u|^q)$ is really remarkable in comparison with that of the usual power nonlinearities $|\partial^j_t u|^p$, where $j=0,1$. More precisely, we can say that the former nonlinearities brings some more flexibility for lower bounds than those coming from the latter nonlinearities.
\end{remark}


\section*{Acknowledgments}
This research of the first author (Tuan Anh Dao) is funded (or partially funded) by the Simons Foundation Grant Targeted for Institute of Mathematics, Vietnam Academy of Science and Technology. 
The work of the second author (H. TAKEDA) was supported in part by the Grant-in-Aid for Scientific Research (C) (No. 19K03596) from Japan Society for the Promotion of Science. The authors are grateful to the referee for his careful reading of the manuscript and for helpful comments.

\section*{Appendix}
This section is to provide several useful inequalities, which play a significant role in the proofs of Sections \ref{Semi-linear.1} and \ref{Semi-linear.2}.

\begin{proposition}[Fractional Gagliardo-Nirenberg inequality] \label{fractionalGagliardoNirenberg}
Let $1<r,\,r_0,\,r_1<\infty$, $\sigma >0$ and $s\in [0,\sigma)$. Then, it holds
$$ \|v\|_{\dot{H}^{s}_r}\lesssim \|v\|_{L^{r_0}}^{1-\theta}\, \|v\|_{\dot{H}^{\sigma}_{r_1}}^\theta, $$
where $\theta=\theta_{s,\sigma}(r,r_0,r_1)= \displaystyle\frac{\frac{1}{r_0}-\frac{1}{r}+\frac{s}{n}}{\frac{1}{r_0}-\frac{1}{r_1}+\frac{\sigma}{n}}$ and $\frac{s}{\sigma}\leq \theta\leq 1$.
\end{proposition}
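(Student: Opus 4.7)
The strategy I would adopt is Littlewood--Paley frequency decomposition combined with Bernstein's inequality, which is the cleanest route to fractional Gagliardo--Nirenberg estimates in the Bessel/Riesz potential setting. As a sanity check, I would first verify the scaling consistency: under the dilation $v\mapsto v(\lambda\,\cdot)$ both sides of the claimed inequality scale like $\lambda^{s-n/r}$ precisely when the stated formula for $\theta$ holds, so the exponent is forced by dimensional analysis, and the constraint $s/\sigma\le \theta\le 1$ is the natural admissibility condition for interpolating between the two endpoints.

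Write $v=\sum_{j\in\mathbb{Z}}\Delta_j v$, where $\Delta_j$ is the Littlewood--Paley projector onto frequencies $|\xi|\sim 2^j$, and use the square-function characterization $\|v\|_{\dot H^s_r}\sim\bigl\|\bigl(\sum_j 2^{2js}|\Delta_j v|^2\bigr)^{1/2}\bigr\|_{L^r}$, valid for $1<r<\infty$ via the Mikhlin multiplier theorem. For each dyadic block, Bernstein's inequality supplies two complementary bounds,
$$
\|\Delta_j v\|_{L^r}\lesssim 2^{jn(1/r_0-1/r)}\|v\|_{L^{r_0}},\qquad \|\Delta_j v\|_{L^r}\lesssim 2^{-j\sigma+jn(1/r_1-1/r)}\|v\|_{\dot H^\sigma_{r_1}},
$$
the first efficient at low frequencies, the second at high frequencies.

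The core step is then to split the sum at a threshold $2^{j_0}$, apply the first estimate for $j\le j_0$ and the second for $j>j_0$, and sum the resulting geometric series. Convergence of both tails is guaranteed by the strict inequality $s<\sigma$ together with the sign conditions encoded in $\theta\in[s/\sigma,1]$. Choosing $j_0$ to balance the two partial sums produces exactly the weights $\|v\|_{L^{r_0}}^{1-\theta}\|v\|_{\dot H^\sigma_{r_1}}^{\theta}$ with the exponent $\theta$ emerging as the balancing ratio, matching the stated formula. Upgrading a pointwise-in-$j$ bound to the full square-function norm requires either applying H\"older's inequality inside the $\ell^2$-sum over dyadic blocks, or equivalently first proving the estimate in the Besov scale $\dot B^s_{r,q}$ (whose norm is already an $\ell^q$-sum of $\|\Delta_j v\|_{L^r}$) and then invoking the embeddings $\dot B^s_{r,\min(r,2)}\hookrightarrow \dot H^s_r\hookrightarrow \dot B^s_{r,\max(r,2)}$.

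The main obstacle is precisely this last passage: controlling the vector-valued square function correctly when $r\ne 2$. It is also the reason the hypothesis forces $1<r,r_0,r_1<\infty$, since the Littlewood--Paley characterization of $\dot H^s_r$ fails at the endpoints and the inequality there must be weakened to a Besov-space statement. Apart from this delicate point, the argument reduces to the standard textbook scheme: scaling, Bernstein, frequency splitting, geometric summation, and optimization of the splitting parameter.
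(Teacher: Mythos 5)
The paper gives no proof of this proposition at all --- it only writes ``For the proof one can see \cite{Ozawa}'' --- and the Littlewood--Paley/Bernstein scheme you outline (scaling check, dyadic blocks, two complementary Bernstein bounds, splitting at a balanced threshold $2^{j_0}$, then passing from Besov to $\dot H^s_r$ via $\dot B^s_{r,\min(r,2)}\hookrightarrow \dot H^s_r$) is exactly the method of that cited reference, so your proposal is correct and follows the intended route. One point to tighten: at the admissible endpoint $\theta=1$ the high-frequency series $\sum_{j>j_0}2^{j(s-\sigma+n(1/r_1-1/r))}$ is \emph{not} summable (its exponent vanishes precisely when $\theta=1$), so that case is not reached by balancing the two tails; there the claimed inequality degenerates to the homogeneous Sobolev embedding $\dot H^{\sigma}_{r_1}\hookrightarrow \dot H^{s}_{r}$, which must be invoked directly (e.g.\ via Hardy--Littlewood--Sobolev, using $r>r_1$). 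Note also that your two Bernstein steps implicitly require $r\ge r_0$ and $r\ge r_1$; these admissibility conditions are not spelled out in the statement but are satisfied in every application the paper makes ($r_0=r_1=2$ and $r\ge 2$).
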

For the proof one can see \cite{Ozawa}.

\begin{proposition}[Sobolev embedding] \label{SobolevEmbedding}
Let $\varepsilon >0$. Then, it holds
\[ \|v\|_{L^\ity} \lesssim \|v\|_{L^{2}}^{1-\theta}\, \|v\|_{\dot{H}^{\frac{n}{2}+\varepsilon}}^{\theta}, \quad \text{ where }\theta= \frac{\frac{n}{2}}{\frac{n}{2}+\varepsilon}. \]
\end{proposition}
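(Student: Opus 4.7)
The plan is to bypass Proposition \ref{fractionalGagliardoNirenberg}, which requires $1 < r < \infty$ and therefore excludes the target space $L^\infty$, and instead argue directly on the Fourier side. I would start from the standard inequality $\|v\|_{L^\infty} \lesssim \|\widehat{v}\|_{L^1}$ coming from the Fourier inversion formula, and then split $\widehat{v}$ at a threshold $R > 0$ to be optimized:
$$\|\widehat{v}\|_{L^1} = \int_{|\xi|\le R} |\widehat{v}(\xi)|\,d\xi + \int_{|\xi|>R} |\widehat{v}(\xi)|\,d\xi.$$

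For the low-frequency piece, Cauchy--Schwarz combined with Plancherel gives
$$\int_{|\xi|\le R} |\widehat{v}(\xi)|\,d\xi \lesssim R^{n/2}\,\|\widehat{v}\|_{L^2} = R^{n/2}\,\|v\|_{L^2}.$$
For the high-frequency piece, inserting the weight $|\xi|^{n/2+\varepsilon}$ before applying Cauchy--Schwarz yields
$$\int_{|\xi|>R} |\widehat{v}(\xi)|\,d\xi \le \left(\int_{|\xi|>R} |\xi|^{-n-2\varepsilon}\,d\xi\right)^{1/2}\!\left(\int_{|\xi|>R} |\xi|^{n+2\varepsilon}|\widehat{v}|^2\,d\xi\right)^{1/2} \lesssim R^{-\varepsilon}\,\|v\|_{\dot{H}^{n/2+\varepsilon}},$$
where convergence of the first integral is precisely what forces the loss of $\varepsilon$ beyond the critical Sobolev index $n/2$.

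Combining the two bounds gives $\|v\|_{L^\infty} \lesssim R^{n/2}\|v\|_{L^2} + R^{-\varepsilon}\|v\|_{\dot{H}^{n/2+\varepsilon}}$. I would then optimize by balancing the two contributions, namely by choosing $R^{n/2+\varepsilon} \sim \|v\|_{\dot{H}^{n/2+\varepsilon}}/\|v\|_{L^2}$ (the degenerate cases where one of the norms vanishes being handled separately), and a direct computation of the exponents obtained after substituting this value of $R$ back recovers exactly the claimed inequality with $\theta = (n/2)/(n/2+\varepsilon)$.

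The only real obstacle is a minor technicality: the bound $\|v\|_{L^\infty} \lesssim \|\widehat{v}\|_{L^1}$ and the finiteness of $\|\widehat{v}\|_{L^1}$ require a density/regularization argument, for instance proving the inequality first for Schwartz $v$, where every quantity is finite, and then extending to general $v \in L^2 \cap \dot{H}^{n/2+\varepsilon}$ by approximation; the inequality itself then ensures that the limit has a continuous representative. This step is standard and introduces no new quantitative content.
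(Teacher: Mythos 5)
Your proposal is correct and follows essentially the same argument as the paper: both split $\|\widehat{v}\|_{L^1}$ at a frequency threshold $R$, apply Cauchy--Schwarz/H\"older with Plancherel to obtain the bounds $R^{n/2}\|v\|_{L^2}$ and $R^{-\varepsilon}\|v\|_{\dot{H}^{n/2+\varepsilon}}$, and then choose $R^{n/2+\varepsilon}=\|v\|_{\dot{H}^{n/2+\varepsilon}}/\|v\|_{L^2}$ to balance the two terms (with the degenerate case $\|v\|_{L^2}=0$ treated separately). No substantive difference.
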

The proof of Proposition \ref{SobolevEmbedding} is well-known. However, for the convenience of the reader, we will show it.
\begin{proof}
If $\|v\|_{L^{2}}=0$, we have $v=0$ a.e. in $\R^{n}$. Then, the statement is trivial.
Now we assume $\|v\|_{L^{2}} \neq 0$ and define 
\begin{equation*}
R= \left( 
\frac{\big\| \nabla^{\frac{n}{2} +\varepsilon } v\big\|_{L^{2}}}{ \|v\|_{L^{2}}}
\right)^{\frac{1}{\frac{n}{2} + \varepsilon}}.
\end{equation*}
It is easy to see 
\begin{equation*}
\begin{split}
\|v\|_{L^{\ity}} & = \| \mathcal{F}^{-1} (\widehat{v})\|_{L^{\ity}} \lesssim \|\widehat{v}\|_{L^{1}}= \int_{|\xi| \le R} |\widehat{v}(\xi)| d \xi+ \int_{|\xi| \ge R} |\widehat{v}(\xi)| d \xi=:A_{1} +A_{2}.
\end{split}
\end{equation*}
Now we apply H\"{o}lder inequality to have 
\begin{equation*}
\begin{split}
A_{1} \le \left( 
\int_{|\xi| \le R} d \xi
\right)^{\frac{1}{2}} \|\widehat{v}\|_{L^{2}} \lesssim R^{\frac{n}{2}} \|v\|_{L^{2}}
\end{split}
\end{equation*}
and
\begin{equation*}
\begin{split}
A_{2} \le \left( 
\int_{|\xi| \ge R} |\xi|^{-(n+2\varepsilon)} d \xi
\right)^{\frac{1}{2}} \big\| |\xi|^{\frac{n}{2}+\varepsilon} \widehat{v}\big\|_{L^{2}} \lesssim R^{-\varepsilon} \big\| \nabla^{\frac{n}{2}+\varepsilon} v\big\|_{L^{2}}.
\end{split}
\end{equation*}
Therefore, summing up the above we obtain 
\begin{equation*}
\begin{split}
\|v\|_{L^{\ity}} & \lesssim R^{\frac{n}{2}} \|v\|_{L^{2}}+ R^{-\varepsilon} \big\| \nabla^{\frac{n}{2}+\varepsilon} v\big\|_{L^{2}} \lesssim \|v\|_{L^{2}} \|^{1-\theta}\, \big\| \nabla^{\frac{n}{2}+\varepsilon} v\big\|_{L^{2}}^{\theta}, 
\end{split}
\end{equation*}
where $\theta= \frac{\frac{n}{2}}{\frac{n}{2}+\varepsilon}$. This is the desired estimate.
Hence, we have completed the proof of Proposition \ref{SobolevEmbedding}.
\end{proof}

Moreover, the following lemmas comes into play.
\begin{lemma} \label{LemmaIntegral.1}
Let $\alpha, \beta \in \R$. Then, the following inequality holds:
$$ \int_0^t (1+t-\tau)^{-\alpha}(1+\tau)^{-\beta}d\tau \lesssim
\begin{cases}
(1+t)^{-\min\{\alpha, \beta\}} &\text{ if }\, \max\{\alpha, \beta\}>1, \\
(1+t)^{-\min\{\alpha, \beta\}}\log(e+t)  &\text{ if }\, \max\{\alpha, \beta\}=1, \\
(1+t)^{1-\alpha-\beta} &\text{ if }\, \max\{\alpha, \beta\}<1.
\end{cases} $$
\end{lemma}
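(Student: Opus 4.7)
The plan is to split the integral at the midpoint $\tau=t/2$ and exploit that on each half one of the two factors is comparable to a $(1+t)^{-\gamma}$ weight, thereby reducing everything to the elementary one-variable bound
$$
J_\gamma(t) := \int_0^{t/2}(1+\rho)^{-\gamma}\,d\rho \lesssim \begin{cases} 1 & \text{if } \gamma>1, \\ \log(e+t) & \text{if } \gamma=1, \\ (1+t)^{1-\gamma} & \text{if } \gamma<1, \end{cases}
$$
which follows by direct integration.

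First I would use $1+t-\tau \ge 1+t/2 \sim 1+t$ on $[0,t/2]$ and the symmetric bound $1+\tau \sim 1+t$ on $[t/2,t]$, together with the substitution $s = t-\tau$ in the latter, to obtain
$$
\int_0^t (1+t-\tau)^{-\alpha}(1+\tau)^{-\beta}\,d\tau \;\lesssim\; (1+t)^{-\alpha}\, J_\beta(t) \,+\, (1+t)^{-\beta}\, J_\alpha(t).
$$
Then I would finish the proof by a short case analysis, assuming without loss of generality $\alpha \le \beta$ so that $\min\{\alpha,\beta\}=\alpha$ and $\max\{\alpha,\beta\}=\beta$. When $\beta>1$, the factor $J_\beta(t)$ is bounded and the first term contributes $(1+t)^{-\alpha}$, while in the second term the factor $(1+t)^{1-\beta}$ absorbs any polynomial or logarithmic growth of $J_\alpha(t)$, leaving $(1+t)^{-\alpha}=(1+t)^{-\min\{\alpha,\beta\}}$. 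When $\beta=1$, the first term gives $(1+t)^{-\alpha}\log(e+t)$ and the second term is easily seen to be dominated by it. When $\beta<1$ (so $\alpha<1$ as well), both terms produce $(1+t)^{1-\alpha-\beta}$.

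The argument is almost entirely bookkeeping; the only point demanding mild care is the first case with $\alpha=1<\beta$, where the logarithm coming from $J_\alpha(t)=O(\log(e+t))$ must not spoil the pure power bound $(1+t)^{-1}$ claimed by the lemma. This works precisely because $\beta>1$ strictly, so $(1+t)^{1-\beta}\log(e+t)=o(1)$, and the log is absorbed into the implicit constant. Apart from this one check, the estimates on each half-interval are self-evident.
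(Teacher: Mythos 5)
Your proof is correct and complete. The paper itself does not prove this lemma (it defers to \cite{DaoReissig}), and your midpoint-splitting argument --- bounding the integral by $(1+t)^{-\alpha}J_\beta(t)+(1+t)^{-\beta}J_\alpha(t)$ and running the case analysis on $\max\{\alpha,\beta\}$ --- is the standard proof of this convolution inequality; you also correctly flag and resolve the one delicate point, namely absorbing the logarithm from $J_\alpha$ when $\alpha=1<\beta$.
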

The proof of this lemma can be found in \cite{DaoReissig}.

\begin{lemma} \label{LemmaIntegral.2}
Let $c>0$, $0\le \alpha<1$ and $\beta \in \R$. Then, the following inequality holds:
$$ \int_0^t e^{-c(t-\tau)}(t-\tau)^{-\alpha} (1+\tau)^{-\beta}d\tau \lesssim (1+t)^{-\beta}. $$
\end{lemma}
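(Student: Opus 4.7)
The plan is to control the integral $I(t) := \int_{0}^{t} e^{-c(t-\tau)}(t-\tau)^{-\alpha}(1+\tau)^{-\beta}\,d\tau$ by exploiting the fact that the exponential kernel $e^{-c(t-\tau)}$ localizes the mass of the convolution near $\tau = t$, precisely where the slowly varying factor $(1+\tau)^{-\beta}$ is comparable to $(1+t)^{-\beta}$. After the change of variables $s = t-\tau$, one has $I(t) = \int_{0}^{t} e^{-cs}\, s^{-\alpha}(1+t-s)^{-\beta}\,ds$, and the natural move is to split this at $s = t/2$ (for $t \ge 1$; the case $t \le 1$ is trivial since the integral is then bounded by a constant and $(1+t)^{-\beta}$ is bounded above and below).

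On the first piece $s \in [0, t/2]$, one has $1 + t - s \ge 1 + t/2 \sim 1 + t$, so $(1+t-s)^{-\beta} \lesssim (1+t)^{-\beta}$ regardless of the sign of $\beta$ (for $\beta \ge 0$ this is immediate; for $\beta < 0$ one uses $1 + t - s \le 1 + t$). Factoring out this bound leaves $\int_{0}^{t/2} e^{-cs} s^{-\alpha}\,ds$, which is uniformly bounded by $\int_{0}^{\infty} e^{-cs} s^{-\alpha}\,ds = c^{\alpha-1}\Gamma(1-\alpha) < \infty$, the integrability at $s=0$ being guaranteed precisely by the hypothesis $\alpha < 1$.

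On the second piece $s \in [t/2, t]$, the exponential factor $e^{-cs} \le e^{-ct/2}$ decays faster than any polynomial in $t$. The remaining factors $s^{-\alpha}(1+t-s)^{-\beta}$ are at worst polynomially growing (since $t-s \le t/2$, one has $(1+t-s)^{-\beta} \lesssim (1+t)^{|\beta|}$, and $s^{-\alpha} \le (t/2)^{-\alpha}$ for $t \ge 2$), so after integrating over an interval of length $t/2$ the whole contribution is dominated by $e^{-ct/2}$ times a polynomial in $t$, which is $\lesssim (1+t)^{-\beta}$.

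Combining the two pieces yields the claimed bound. The only delicate point — and it is purely bookkeeping rather than a genuine obstacle — is to ensure the constants are independent of $\beta$ (in particular for negative $\beta$, where both factors $(1+t-s)^{-\beta}$ and $(1+t)^{-\beta}$ grow), which is handled by the observation that the range of integration forces $1+t-s$ and $1+t$ to be mutually comparable on $[0, t/2]$ and the exponential beats any polynomial on $[t/2, t]$. No further technical tool is needed.
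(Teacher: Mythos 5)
Your proof is correct and follows essentially the same route as the paper: treat $t\lesssim 1$ trivially, split at the midpoint, use $1+\tau\approx 1+t$ near $\tau=t$ and the exponential decay away from it. The only (cosmetic) difference is that you bound $\int_0^{t/2}e^{-cs}s^{-\alpha}\,ds$ directly by the convergent integral $c^{\alpha-1}\Gamma(1-\alpha)$, whereas the paper reaches the same conclusion by an integration by parts; your version is slightly cleaner.
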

For the ease of reading, we will prove this lemma even if it is standard and well-known.

\begin{proof}
Let us distinguish our consideration into two cases as follows:
\begin{itemize}[leftmargin=*]
\item If $t\in [0,1]$, then it is obvious that
$$ \int_0^t e^{-c(t-\tau)}(t-\tau)^{-\alpha} (1+\tau)^{-\beta}d\tau \lesssim \int_0^t (t-\tau)^{-\alpha} d\tau= \frac{1}{1-\alpha}t^{1-\alpha}\le \frac{1}{1-\alpha} \lesssim (1+t)^{-\beta}, $$
where we have used the condition $0\le \alpha<1$.
\item If $t\ge 1$, then we split the integral of the left-hand side into the following two parts:
\begin{align*}
I:&= \int_0^t e^{-c(t-\tau)}(t-\tau)^{-\alpha} (1+\tau)^{-\beta}d\tau \\ 
&= \int_0^{t/2} e^{-c(t-\tau)}(t-\tau)^{-\alpha} (1+\tau)^{-\beta}d\tau+ \int_{t/2}^t e^{-c(t-\tau)}(t-\tau)^{-\alpha} (1+\tau)^{-\beta}d\tau \\
&=: I_1+ I_2.
\end{align*}
Noticing that $t-\tau \in [t/2,t]$ for any $\tau\in [0,t/2]$ in the first integral one derives
\begin{align*}
I_1 \lesssim e^{-ct/2}t^{-\alpha}\int_0^{t/2} (1+\tau)^{-\beta}d\tau &\lesssim
\begin{cases}
e^{-ct/2}(1+t)^{1-\alpha-\beta} &\text{ if }\, \beta<1 \\
e^{-ct/2}(1+t)^{-\alpha}\log(1+t) &\text{ if }\, \beta=1 \\
e^{-ct/2}(1+t)^{-\alpha} &\text{ if }\, \beta>1
\end{cases} \\
&\lesssim (1+t)^{-\beta}.
\end{align*}
Thanks to the relation $1+\tau \approx 1+t$ for any $\tau\in [t/2,t]$, we may estimate the second integral in the following way:
\begin{align*}
I_2 &\lesssim (1+t)^{-\beta}\int_{t/2}^t e^{-c(t-\tau)}(t-\tau)^{-\alpha}d\tau \\
&\quad= (1+t)^{-\beta}\int_0^{t/2} e^{-c\rho}\rho^{-\alpha}d\rho \qquad (\text{by the change of variables }\rho= t-\tau) \\ 
&\quad= \frac{(1+t)^{-\beta}}{1-\alpha}\Big(e^{-c\rho}\rho^{1-\alpha}\Big|^{\rho=t/2}_{\rho=0}+ \frac{1}{c}\int_0^{t/2} e^{-c\rho}\rho^{1-\alpha}d\rho \Big) \\
&\lesssim \frac{(1+t)^{-\beta}}{1-\alpha}\Big(e^{-ct/2}t^{1-\alpha}+ \frac{1}{c}\int_0^{t/2} e^{-c\rho/2}d\rho \Big) \qquad (\text{by } 0\le \alpha<1) \\
&\lesssim (1+t)^{-\beta}.
\end{align*}
\end{itemize}
Combining all the above estimates leads to what we wanted to prove. Therefore, our proof is completed.
\end{proof}



\begin{thebibliography}{ABC}

\bibitem{AguirreEscobedo1993} J. Aguirre, M. Escobedo, \textit{On the blow-up of solutions of a convective reaction diffusion equation}, Proceedings of the Royal Society of Edinburgh Section A: Mathematics, 123(3) (1993), 433-460.

\bibitem{D'Abbicco2017} M. D'Abbicco, \textit{The critical exponent for the dissipative plate equation with power nonlinearity}, Comput. Math. Appl., 74 (2017), 1006-1014.

\bibitem{DaoReissig} T.A. Dao, M. Reissig, \textit{An application of $L^1$ estimates for oscillating integrals to parabolic like semi-linear structurally damped $\sigma$-evolution models}, J. Math. Anal. Appl., 476 (2019), 426-463.

\bibitem{ReissigEbert} M.R. Ebert, M. Reissig, \textit{Methods for partial differential equations, qualitative properties of solutions, phase space analysis, semilinear models}, Birkh\"auser, 2018.

\bibitem{EscobedoZuazua1991} M. Escobedo, E. Zuazua, \textit{Large time behavior for convection diffusion equations in $R^n$}, J. Funct. Anal., 100(1) (1991), 119-161.

\bibitem{EscobedoZuazua1997} M. Escobedo, E. Zuazua, \textit{Long-time behavior for a convection diffusion equation in higher dimensions}, SIAM J. Math. Anal., 28(3) (1997), 570-594.

\bibitem{FukushimaIkehataMichihisa} T. Fukushima, R. Ikehata, H. Michihisa, \textit{Thresholds for low regularity solutions to wave equations with structural damping}, J. Math. Anal. Appl., \textbf{494} (2021), 124669, https://doi.org/10.1016/j.jmaa.2020.124669.

\bibitem{GhisiGobbinoHaraux2016} M. Ghisi, M. Gobbino, A. Haraux, \textit{Local and global smoothing effects for some linear hyperbolic equations with a strong dissipation}, Trans. Amer. Math. Soc., 368 (2016), 2039-2079.

\bibitem{Ozawa} H. Hajaiej, L. Molinet, T. Ozawa, B. Wang, \textit{Necessary and sufficient conditions for the fractional Gagliardo-Nirenberg inequalities and applications to Navier-Stokes and generalized boson equations}, Harmonic analysis and nonlinear partial differential equations, RIMS Kokyuroku Bessatsu, B26, Res.Inst.Math.Sci. (RIMS), Kyoto, (2011), 159-175.

\bibitem{IkehataIyota} R. Ikehata R, S. Iyota, \textit{Asymptotic profile of solutions for some wave equations with very strong structural damping}, Math. Meth. Appl. Sci., 41 (2018), 5074-5090.

\bibitem{IkehataTakeda} R. Ikehata, H. Takeda, \textit{Asymptotic profiles of solutions for structural damped wave equations}, J. Dyn. Differ. Equ., 31 (2019), 537-571.

\bibitem{SugitaniKawashima} Y. Sugitani, S. Kawashima, \textit{Decay estimates of solutions to a semilinear dissipative plate equation}, J. Hyperbolic Differ. Equ., 7 (2010), 471-501.

\bibitem{Zuazua93} E. Zuazua, \textit{Weakly non-linear large time behavior for scalar convectiondiffusion equations}, Diff. Anal. Integral Equations, 6(6) (1993), 1481-1492.

\end{thebibliography}
\end{document}